\author{Dor Elboim and Ofir Gorodetsky} 
\date{}
\title{Multiplicative arithmetic functions and the generalized Ewens measure}
\newcommand{\Addresses}{{
		\bigskip
		\footnotesize
		
		\textsc{Department of Mathematics, Princeton University, Princeton, NJ 08544, USA}\par\nopagebreak
		\textit{E-mail address:} \texttt{delboim@math.princeton.edu}
		
		\medskip
		
		\textsc{Mathematical Institute, University of Oxford, Oxford, OX2 6GG, UK}\par\nopagebreak
		\textit{E-mail address:} \texttt{ofir.goro@gmail.com}
}}
\theoremstyle{plain}
\newtheorem{thm}{Theorem}[section]
\newtheorem{lem}[thm]{Lemma}  
\newtheorem{prop}[thm]{Proposition}
\newtheorem{cor}[thm]{Corollary}
\theoremstyle{definition}
\newtheorem{definition}[thm]{Definition}
\theoremstyle{remark}
\newtheorem{remark}[thm]{Remark}
\newcommand{\PP}{\mathbb{P}}
\newcommand{\RR}{\mathbb{R}}
\newcommand{\CC}{\mathbb{C}}
\newcommand{\NN}{\mathbb{N}}
\newcommand{\EE}{\mathbb{E}}
\newcommand{\Primes}{\mathcal{P}}
\newcommand{\PolyConst}{K}
\newcommand{\largep}{\mathcal{p}}
\newcommand{\Ewensc}{{r}}
\newcommand{\logexp}{{a}}
\DeclareMathOperator\betadist{beta}
\DeclareMathOperator\gammadist{gamma}
\DeclareMathOperator\PD{PD}
\numberwithin{equation}{section}
\begin{document}

\maketitle
\begin{abstract}
Random integers, sampled uniformly from $[1,x]$, share similarities with random permutations, sampled uniformly from $S_n$. These similarities include the Erd\H{o}s--Kac theorem on the distribution of the number of prime factors of a random integer, and Billingsley's theorem on the largest prime factors of a random integer. In this paper we extend this analogy to non-uniform distributions.

Given a multiplicative function $\alpha \colon \NN \to \RR_{\ge 0}$, one may associate with it a measure on the integers in $[1,x]$, where $n$ is sampled with probability proportional to the value $\alpha(n)$. Analogously, given a sequence $\{ \theta_i\}_{i \ge 1}$ of non-negative reals, one may associate with it a measure on $S_n$ that assigns to a permutation a probability proportional to a product of weights over the cycles of the permutation. This measure is known as the generalized Ewens measure.

We study the case where the mean value of $\alpha$ over primes tends to some positive $\theta$, as well as the weights $\alpha(p) \approx (\log p)^{\gamma}$. In both cases, we obtain results in the integer setting which are in agreement with those in the permutation setting.
\end{abstract}
\section{Introduction}
The analogy between permutations and integers is a well-established one, see the surveys \cite[Ch.~1]{arratia2003} and \cite{granville2008anatomy}. The analogy leads to advancements both in permutations and in integers, see e.g.~\cite{granville2006,granville2007prime,eberhard2016,eberhard2017}. This analogy always involves comparing a uniformly drawn integer in $[1,x]$ and a uniformly drawn permutation from $S_n$, where $n \approx \log x$. Our results suggest that the analogy persists even when the chosen measures are not  uniform. 

We begin with setup and notation.
Let $S_n$ be the symmetric group on $\{1,2,\ldots,n\}$. Given $\pi \in S_n$, we denote by $\ell_1(\pi) \ge \ell_2(\pi) \ge \ldots $ the lengths of the disjoint cycles of $\pi$,  arranged in non-increasing order. They satisfy
\begin{equation}\label{eq:SumCycles}
	\ell_1(\pi)+\ell_2(\pi)+\ldots =n. 
\end{equation}
We let $C_i(\pi)$ be the number of cycles of $\pi$ of length $i$ and denote by $C(\pi)$ the number of cycles in $\pi$.

In recent years, there has been significant activity in the study of permutations sampled according to \emph{cycle weights} \cite{yakymiv2007,timashev2008,lugo2009,betz2011,maples2012,nikeghbali2013,nikeghbali20132,ercolani2014,cipriani2015,dereich2015,storm2015,robles2018random}; this model is related to the study of the quantum
Bose gas in statistical mechanics, see e.g.~\cite{betz2009spatial,betz2011spatial,elboim2019}. To state the model, let $\theta_1,\ldots,\theta_n$ be non-negative reals (not all zero). 
The probability of a permutation $\pi$ with respect to the weights $\theta_i$  is defined to be
\begin{equation}\label{eq:GeneralizedEwens}
	\PP_{n,\theta_i}(\pi) = \frac{1}{h_n n!} \prod_{i=1}^{n} \theta_i^{C_i(\pi)} = \frac{1}{h_n n!} \prod_{\substack{c \in \pi\\\text{cycle of }\pi}} \theta_{|c|}
\end{equation}
where $h_n$ is the normalization constant, known as the partition function, given by
\begin{equation}\label{eq:DefPartition}
	h_n = \frac{1}{n!}\sum_{\pi \in S_n} \prod_{i=1}^{n} \theta_i^{C_i(\pi)}.
\end{equation}
We let $\pi_{n,\theta_i}$ be the random permutation whose probability distribution is \eqref{eq:GeneralizedEwens}. The measure $\PP_{n,\theta_i}$ is called a generalized Ewens measure.

We now describe an analogous measure on the positive integers up to $x$, which is the main object of study in this paper.
Given a positive integer $m \in \NN$, denote by $\largep_1(m) \ge \largep_2(m) \ge \ldots$ the prime factors of $m$ (repeated according to their multiplicity), arranged in non-increasing order. We have
\begin{equation}
	\log \largep_1(m) + \log \largep_2(m) + \ldots  = \log m.
\end{equation}
We denote by $\Omega(m)$ the number of prime factors of $m$, counted with multiplicity. 
If $p^k \mid n$ and $p^{k+1} \nmid n$, we write $p^k \mid \mid n$. This $k$ is known as the multiplicity of $p$ in $n$, and is denoted $\nu_p(n)$.

A function $\alpha\colon \NN \to \CC$ is called multiplicative if $\alpha(1)=1$ and $\alpha(nm)=\alpha(n)\alpha(m)$ for every coprime $n,m \in \NN$. Given a non-negative multiplicative function $\alpha$, we define a measure on the positive integers up to $x$ by
\begin{equation}\label{eq:MultMeasure}
	\PP_{x,\alpha}(m) = \frac{1}{S(x)} \alpha(m)=  \frac{1}{S(x)} \prod_{p^k \mid \mid m}\alpha(p^{k})
\end{equation}
where the product is over (maximal) prime powers dividing $m$, and $S(x)$ is the normalization constant
\begin{equation}\label{eq:SumS}
	S(x) = \sum_{m \le x} \alpha(m).
\end{equation}
We let \[N_x=N_{x,\alpha}\]
be the random integer whose probability distribution is \eqref{eq:MultMeasure}. 
In this paper we consider two different families of multiplicative measures on the integers, and compare our results with corresponding generalized Ewens measures.
\subsection{Constant mean value}
We consider multiplicative functions $\alpha\colon \NN \to \RR_{\ge 0}$ satisfying the following two conditions for some $\theta>0$, $d >-1$, $\logexp\in (0,1)$, $\eta \in (0,1/2]$ and $\Ewensc \in (0,2)$:
\begin{align}
\label{eq:Assumption1}
&\text{(I) } \sum_{p \le x} \frac{\alpha(p) \log p}{p^d} = \theta x + O\left(\frac{x}{\log^{\logexp} x}\right), \\
\label{eq:Assumption2} &\text{(II) }\frac{\alpha(p)}{p^d}=O(p^{1/2-\eta}), \quad \frac{\alpha(p^k)}{p^{dk}} = O(\Ewensc^k) \mbox{ for all $k \ge 2$.}
\end{align}
 Here $p$ denotes a prime number.  Recall that the prime number theorem says that $\sum_{p \le x} \log p \sim x$. Thus, \eqref{eq:Assumption1} should be interpreted as $\alpha(p)/p^d$ being, on average, of size $\theta$, and it is a common condition in multiplicative number theory. Condition \eqref{eq:Assumption2} is of a more technical nature. We did not strive to find the most general conditions for our theorem to hold, but rather to find conditions which are easy to work with, lead to short proofs and are satisfied for natural examples. Our result for these weights is the following.
\begin{thm}
	\label{thm:EKDG}
	Let $\alpha\colon \NN \to \RR_{\ge 0}$ be a multiplicative function satisfying \eqref{eq:Assumption1}--\eqref{eq:Assumption2} with $\theta>0$,	$d >-1$, $\logexp\in (0,1)$, $\eta \in (0,1/2]$ and $\Ewensc \in (0,2)$. As $x \to \infty$ we have
		\begin{equation}\label{eq:norm conv}
		\frac{ \Omega (N_{x})-\theta \log \log x }{\sqrt{\theta \log \log x }} \overset{d}{\longrightarrow} N(0,1)
		\end{equation}
and
		\begin{equation}\label{eq:pd conv}
		\left( \frac{\log \largep_1(N_x)}{\log x},  \frac{\log \largep_2(N_x)}{\log x}, \ldots  \right) \overset{d}{\longrightarrow} \mathrm{PD}(\theta),
		\end{equation}
		where $\mathrm{PD}(\theta)$ is the Poisson--Dirichlet distribution with parameter $\theta$ (defined in \S\ref{sec:prob}).
\end{thm}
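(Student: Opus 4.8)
The common engine behind both parts is a uniform Landau--Selberg--Delange asymptotic, which I treat as the central lemma: if $\beta\colon\NN\to\RR_{\ge0}$ is multiplicative and satisfies \eqref{eq:Assumption1}--\eqref{eq:Assumption2} with some $\theta_\beta>0$ and the same $d$, then (via Perron's formula and a Hankel-type contour around the branch point $s=d+1$ of $\zeta(s-d)^{\theta_\beta}$)
\[
\FuncSigma_\beta(t):=\sum_{n\le t}\beta(n)=\frac{c_\beta}{\Gamma(\theta_\beta)}\,t^{d+1}(\log t)^{\theta_\beta-1}\bigl(1+o(1)\bigr)\qquad(t\to\infty),
\]
with $c_\beta>0$ an explicit Euler product, uniformly as $\theta_\beta$ runs over compact subsets of $(0,\infty)$ with $c_\beta$ bounded. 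Conditions \eqref{eq:Assumption1}--\eqref{eq:Assumption2} are tailored to this: the Dirichlet series of $\beta$ factors as $\zeta(s-d)^{\theta_\beta}G_\beta(s)$ with $G_\beta$ holomorphic and controlled to the right of $\mathrm{Re}(s)=d+\tfrac12$, \eqref{eq:Assumption1} handling the leading Euler factor and \eqref{eq:Assumption2} the higher ones, and the bound $\Ewensc<\sqrt2$ leaves room to run this with $\beta=\alpha\cdot z^{\Omega}$ for real $z$ up to $\sqrt2$ (the $p$-local factor $\sum_k\alpha(p^k)z^k/p^{ks}$ converges near $s=d+1$ once $\Ewensc z<2$). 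I also record the elementary bookkeeping estimate: uniformly over distinct primes $p_1,\dots,p_k\ge x^{\delta}$ with $p_1\cdots p_k\le x^{1-\delta}$,
\[
\sum_{\substack{n\le x\\ p_1\cdots p_k\mid n}}\alpha(n)=\alpha(p_1)\cdots\alpha(p_k)\,\FuncSigma_\alpha\!\Bigl(\tfrac{x}{p_1\cdots p_k}\Bigr)\bigl(1+o(1)\bigr),
\]
proved by writing $n=p_1^{a_1}\cdots p_k^{a_k}m$ with $(m,p_1\cdots p_k)=1$, isolating $a_1=\cdots=a_k=1$, and bounding the rest by \eqref{eq:Assumption2} (the primes being large makes the discarded Euler factors and the terms with some $a_i\ge2$ contribute $o(1)$).

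\emph{Proof of (1).} The function $n\mapsto\alpha(n)z^{\Omega(n)}$ is multiplicative with value $z\alpha(p)$ at primes, so it satisfies \eqref{eq:Assumption1} with parameter $\theta z$ (and \eqref{eq:Assumption2} with $\Ewensc z<2$ for $z$ near $1$). The central lemma then gives, for real $z$ in a fixed neighbourhood of $1$,
\[
\EE\bigl[z^{\Omega(N_x)}\bigr]=\frac{\FuncSigma_{\alpha z^{\Omega}}(x)}{\FuncSigma_{\alpha}(x)}=\frac{c_{\alpha z^{\Omega}}\,\Gamma(\theta)}{c_\alpha\,\Gamma(\theta z)}\,(\log x)^{\theta(z-1)}\bigl(1+o(1)\bigr),
\]
and the prefactor tends to $1$ as $z\to1$. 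Taking $z=\exp\!\bigl(t/\sqrt{\theta\log\log x}\,\bigr)$ one has $\theta(z-1)\log\log x=t\sqrt{\theta\log\log x}+\tfrac{t^2}{2}+o(1)$, so $\EE\exp\!\bigl(t\,(\Omega(N_x)-\theta\log\log x)/\sqrt{\theta\log\log x}\,\bigr)\to e^{t^2/2}$ for every $t$ in a neighbourhood of $0$, which yields \eqref{eq:norm conv}.

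\emph{Proof of (2).} Let $\Xi_x=\sum_{i\le\Omega(N_x)}\delta_{\log\largep_i(N_x)/\log x}$, a point process on $(0,1]$; since every configuration in the state space has coordinate-sum $\le1$, proving $\Xi_x\Rightarrow$ the $\mathrm{PD}(\theta)$ point process is equivalent to \eqref{eq:pd conv}. First, there is no escape of mass: $\PP_{x,\alpha}(N_x\le x^{1-\varepsilon})=\FuncSigma_\alpha(x^{1-\varepsilon})/\FuncSigma_\alpha(x)=O(x^{-\varepsilon(d+1)/2})\to0$ since $d>-1$, hence $\sum_i\log\largep_i(N_x)/\log x=\log N_x/\log x\to1$ in probability, matching $\sum_iV_i=1$ for $(V_i)\sim\mathrm{PD}(\theta)$. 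Next, fix $k\ge1$ and $\phi$ supported in $[\varepsilon,1]$. On $\mathrm{supp}(\phi)$ at most $\lfloor1/\varepsilon\rfloor$ points are in play, and $\PP_{x,\alpha}(\exists\,p\ge x^\varepsilon:\ p^2\mid N_x)\to0$ by \eqref{eq:Assumption2}, so coincidences $p_i=p_j$ are negligible and the $k$-th factorial moment of $\langle\Xi_x,\phi\rangle$ equals, up to $o(1)$,
\[
\sum_{p_1,\dots,p_k\text{ distinct}}\PP_{x,\alpha}(p_1\cdots p_k\mid N_x)\prod_{i=1}^k\phi\!\Bigl(\tfrac{\log p_i}{\log x}\Bigr);
\]
the bookkeeping estimate and the central lemma give, on $p_1\cdots p_k\le x^{1-\delta}$,
\[
\PP_{x,\alpha}(p_1\cdots p_k\mid N_x)=\prod_{i=1}^k\frac{\alpha(p_i)}{p_i^{d+1}}\cdot\Bigl(1-\sum_{i=1}^k\tfrac{\log p_i}{\log x}\Bigr)^{\theta-1}\bigl(1+o(1)\bigr),
\]
while the tuples with $p_1\cdots p_k>x^{1-\delta}$ contribute $o(1)$ by a direct count. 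Summing over the $p_i$ and using \eqref{eq:Assumption1} through partial summation (which turns $\sum_p\tfrac{\alpha(p)}{p^{d+1}}\psi\!\bigl(\tfrac{\log p}{\log x}\bigr)$ into $\theta\int_0^1\psi(u)\,\tfrac{du}{u}+o(1)$), this moment converges to
\[
\theta^k\int_{[\varepsilon,1]^k}\Bigl(\prod_{i=1}^k\phi(u_i)\Bigr)\frac{(1-u_1-\cdots-u_k)^{\theta-1}}{u_1\cdots u_k}\,\mathbf 1\{u_1+\cdots+u_k<1\}\,du,
\]
which is exactly the $k$-th factorial moment of the $\mathrm{PD}(\theta)$ process tested against $\phi$. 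For each fixed $\varepsilon>0$ the restriction $\Xi_x|_{[\varepsilon,1]}$ carries at most $\lfloor1/\varepsilon\rfloor$ points, so these uniformly bounded factorial moments determine the limit and force $\Xi_x|_{[\varepsilon,1]}\Rightarrow\mathrm{PD}(\theta)|_{[\varepsilon,1]}$; letting $\varepsilon\to0$ and using the absence of mass loss upgrades this to $\Xi_x\Rightarrow\mathrm{PD}(\theta)$ by the standard compactness argument going back to Billingsley.

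\emph{Main obstacle.} The analytic core is the uniform Landau--Selberg--Delange asymptotic with an error term strong enough to survive (i) division by $\log x$ with a varying exponent $\theta z$ in part~(1), and (ii) summation over the primes $p_i$ in part~(2) --- in particular over the thin range where $\sum_i\log p_i/\log x\to1$ and the cofactor $x/(p_1\cdots p_k)$ is only a bounded power of $x$, so the $\FuncSigma_\alpha$-asymptotic breaks down and those tuples must be shown negligible by a separate elementary estimate. The remaining steps --- the moment-generating-function argument in (1), the point-process assembly in (2), and the identification of the $\mathrm{PD}(\theta)$ correlation functions --- are standard.
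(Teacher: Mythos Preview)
Your proposal is essentially correct, but both parts follow genuinely different routes from the paper.

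For part~(1), the paper adapts Billingsley's moment method: it replaces $\Omega$ by $\omega$, restricts to a sparse set of primes $\Primes_x$, and compares the centred moments of $\sum_{p\in\Primes_x}\mathds 1_{p\mid N_x}$ to those of a sum of independent Bernoulli variables via a divisor-counting lemma (Lemma~\ref{lem:prob that q_1 q_m|N}), finishing with Lindeberg--Feller. You instead run the mean-value asymptotic on the twisted function $\alpha\cdot z^{\Omega}$ and read off the Laplace transform; this is cleaner and avoids the truncation to $\Primes_x$, but it requires the asymptotic for $\sum_{n\le x}\beta(n)$ to hold \emph{uniformly} as the parameter $\theta_\beta=\theta z$ varies, so that the $o(1)$ error survives the choice $z=z_x\to1$. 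One caution: condition~\eqref{eq:Assumption1} with arbitrary logarithmic savings is strictly weaker than analytic continuation of the Dirichlet series past $\Re s=d+1$, so the Hankel-contour form of Selberg--Delange does not apply as stated; the paper uses the Granville--Koukoulopoulos theorem (Theorem~\ref{thm:granville}) for exactly this reason, and your argument should appeal to the same result (or verify that its error term is uniform in $z$ on compacta, which it is). Also, for $\alpha z^{\Omega}$ to satisfy~\eqref{eq:Assumption2} you need $\Ewensc z<\sqrt 2$, not $\Ewensc z<2$; since $\Ewensc<\sqrt2$ there is still room for $z$ near~$1$.

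For part~(2), the paper follows Donnelly--Grimmett: it passes to the size-biased reordering of the prime factors and shows the successive ratios converge to i.i.d.\ $\betadist(1,\theta)$ variables (Proposition~\ref{prop:proveconpd}), establishing only a liminf lower bound on the relevant box probabilities, which together with total mass~$1$ forces equality. You instead compute the factorial moment measures of the point process $\Xi_x$ and identify the limit as the $k$-point correlation function $\theta^k(1-\sum u_i)^{\theta-1}/\prod u_i$ of $\mathrm{PD}(\theta)$; this is the Kingman/Watterson route and is equally valid. What your approach buys is that the correlation-function formula falls out directly from the same bookkeeping estimate, with no need for the size-biased machinery; what the paper's approach buys is that one only ever needs a \emph{lower} bound on $\sum_{n\le x,\,p_1\cdots p_k\mid n}\alpha(n)$ (Lemma~\ref{lem:lower bound on partial sum}), whereas your factorial-moment computation needs matching upper bounds, including control of the boundary region $\sum_i\log p_i/\log x\in(1-\delta,1)$ where $(1-\sum u_i)^{\theta-1}$ is singular for $\theta<1$. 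Your remark that this region ``contributes $o(1)$ by a direct count'' is correct but is precisely the place where the work lies when $\theta<1$; it follows from the integrability of $(1-s)^{\theta-1}$ together with a uniform upper bound of the shape $S(y)\ll y^{d+1}\log^{\theta-1}y$ (Corollary~\ref{cor:AsympSum}).
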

Here the arrows indicate convergence in distribution. The proof of the first part of Theorem~\ref{thm:EKDG} applies to $\omega(N_x)$ as well, where $\omega(n)$ counts the number of prime factors of $n$ \emph{without} multiplicities, and the result is the same. The prototypical example of a function $\alpha$ satisfying the conditions is $\theta^{\omega(n)}$ (with $d=0$, $\eta=1/2$, $\Ewensc=1$ and any $\logexp>0$).

The case $\alpha \equiv 1$ of \eqref{eq:norm conv} is the Erd\H{o}s--Kac theorem \cite{erdos1940}. Our proof of it is a generalization of a proof given by Billingsley \cite{billingsley1969} to the original Erd\H{o}s--Kac theorem with $\alpha \equiv 1$. The case $\alpha \equiv 1$ of \eqref{eq:pd conv} is Billingsley's theorem \cite{billingsley1972}. Our proof of it is a generalization of Donnelly and Grimmett's proof \cite{donnelly1993}, who elucidated Billingsley's result. 

The Ewens measure with parameter $\theta$ ($>0$) is a measure on $S_n$, which may be defined by taking $\theta_i=\theta$ in the definition of the generalized Ewens measure. The partition function is $\binom{n+\theta-1}{n}$ in this case.
This measure has first appeared in the study of population genetics \cite{ewens1972}. The Ewens measure has found many practical applications, through its connection with Kingman's coalescent process \cite{kingman1982} and its occurrence in non-parametric Bayesian statistics \cite{antoniak1974}. 

Theorem~\ref{thm:EKDG} should be compared with two results on the Ewens measure, one on $C(\pi_{n,\theta})$ (the number of cycles in $\pi_{n,\theta}$) by Hansen \cite{hansen1990}, and another on $(\ell_1(\pi_{n,\theta})/n,\ell_2(\pi_{n,\theta})/n,\ldots)$ by Watterson \cite{watterson1976}.
\begin{thm}[Hansen, Watterson]\label{thm:ewens} Let $\theta>0$. As $n \to \infty$ we have
		\begin{equation}\label{eq:hansen}
		\frac{C(\pi_{n,\theta})-\theta\log n}{\sqrt{\theta\log n}} \overset{d}{\longrightarrow} N(0,1)
		\end{equation}
and
		\begin{equation}\label{eq:water}
		\left( \frac{ \ell_1(\pi_{n,\theta})}{n},  \frac{ \ell_2(\pi_{n,\theta})}{n}, \ldots  \right) \overset{d}{\longrightarrow} \mathrm{PD}(\theta).
		\end{equation}
\end{thm}
The first part of Theorem \ref{thm:ewens} was proven under more general conditions, e.g.~when $\sum_{i=1}^{n}\theta_i/n \to \theta$ sufficiently fast, see Lugo \cite{lugo2009} and the works of Manstavi\v{c}ius \cite{Manst2002,Manst2009,Manst2017}.

The similarity of Theorem~\ref{thm:EKDG} and Theorem~\ref{thm:ewens} is most apparent for functions $\alpha$ where $\alpha(p) \approx \theta$. It suggests an analogy between permutations chosen according to the Ewens measure and integers chosen according to multiplicative weights. We now discuss previous works.
\subsubsection{Erd\H{o}s--Kac}
In a series of works, Alladi \cite{alladi1982,alladi1984,alladi1985,alladi1122} proved a generalization of Erd\H{o}s--Kac involving weights $\alpha$ as well. His proof uses the combinatorial sieve and he requires $\alpha$ to satisfy a `level-of-distribution' condition which is not always easily verified. A related (but simpler) sieve-theoretic approach to Erd\H{o}s--Kac and its generalizations was introduced by Granville and Soundararajan \cite{granville2007}. This approach was used by Khan, Milinovich and Subedi to prove an Erd\H{o}s--Kac theorem with weights being $d_k$, the $k$th divisor function \cite{Khan2021}. 

See Elliott \cite{Elliott1,Elliott2} for a treatment of the Erd\H{o}s--Kac theorem with weights being the standard divisor function $d_2$ and its real powers. Tenenbaum proved in \cite[Cor.~2.5]{tenenbaum2017} an impressively general weighted Erd\H{o}s--Kac theorem, but  unlike Theorem~\ref{thm:EKDG}, he requires $\alpha(p)/p^d$ to be uniformly bounded. Both Elliott and Tenenbaum use characteristic functions and complex analysis while we avoid these.
\subsubsection{Billingsley}
Arratia, Kochman and Miller proved an analogue of Billingsley's theorem for \emph{normed arithmetic semigroups} satisfying certain growth conditions \cite[Thm.~2]{arratia2014}. A commutative semigroup $S$ is called normed arithmetic semigroup if it contains an identity element and admits unique factorization into `prime' elements. Furthermore, it should come equipped with a multiplicative norm function $s \mapsto |s| \in \RR_{>0}$, such that $N(x)=\#\{ s \in S: |s| \le x \}$ is a finite number for each $x >0$.
There is small overlap between \cite[Thm.~2]{arratia2014} and the second part of Theorem \ref{thm:EKDG} as there are multiplicative functions $\alpha$ satisfying \eqref{eq:Assumption1}--\eqref{eq:Assumption2} and coinciding with $\alpha_S(n):=\#\{ s \in S: |s| =n \}$ for some normed arithmetic semigroup $S$.
\subsection{Polynomially-growing weights}
In the permutation setting, the measure $\PP_{n,\theta_i}$ was studied extensively in the case of polynomially-growing cycle weights, that is
\begin{equation}\label{eq:polyweights}
\theta_n \approx A n^{\gamma},
\end{equation}
see \cite{erlihson2008,maples2012,ercolani2014,dereich2015,cipriani2015}. Ercolani and Ueltschi proved the following in \cite[Thm.~5.1]{ercolani2014}.
\begin{thm}[Ercolani and Ueltschi]\label{thm:ECPolyCyc}
	Let $\gamma>0$, and take
	\begin{equation}\label{eq:thetachoicepol}
	\theta_n=\frac{\Gamma(\gamma+n+1)}{n!}= (1+o(1) ) n^{\gamma}.
	\end{equation}
	As $n \to \infty$ we have
	\begin{equation}
	\EE C(\pi_{n,\theta_i}) \sim n^{\frac{\gamma}{\gamma+1}} \left(\frac{\Gamma(\gamma)}{\gamma^{\gamma}}\right)^{\frac{1}{\gamma+1}}.
	\end{equation}
\end{thm}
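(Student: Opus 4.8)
\emph{Proof proposal.} The plan is to run the classical saddle-point analysis for the exponential generating function of the partition function. Write $Z_n = h_n n! = \sum_{\pi\in S_n}\prod_i \theta_i^{C_i(\pi)}$ and $g(x) = \sum_{j\ge 1}\frac{\theta_j}{j}x^j$. The exponential formula gives $\sum_{n\ge 0} Z_n \frac{x^n}{n!} = \exp(g(x))$, and marking all cycles (replace each $\theta_j$ by $u\theta_j$ and differentiate at $u=1$) gives $\sum_{n\ge 0}\big(\EE C(\pi_{n,\theta_i})\,Z_n\big)\frac{x^n}{n!} = g(x)\exp(g(x))$. Hence
\begin{equation}
\EE C(\pi_{n,\theta_i}) = \frac{[x^n]\big(g(x)e^{g(x)}\big)}{[x^n]e^{g(x)}}.
\end{equation}

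First I would pin down the behaviour of $g$ at its singularity $x=1$. With the specific choice $\theta_j=\Gamma(\gamma+j+1)/j!$ the binomial series gives $\sum_{j\ge 0}\theta_j x^j=\Gamma(\gamma+1)(1-x)^{-\gamma-1}$, so $g'(x)=\Gamma(\gamma+1)x^{-1}\big((1-x)^{-\gamma-1}-1\big)$ and therefore $g(x) = \Gamma(\gamma)(1-x)^{-\gamma}+O\big((1-x)^{1-\gamma}+1\big)$ as $x\to 1^-$; in fact $g$ extends to a function holomorphic on a slit disc with a single algebraic singularity of this type at $x=1$.

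Next I would apply the saddle-point method, i.e. verify that $e^{g(x)}$ is Hayman-admissible (or invoke the transfer theorems for $\exp$ of a function with one algebraic singularity). The saddle $x_n\in(0,1)$ solves $x_n g'(x_n)=n$, whence $1-x_n=\big(\Gamma(\gamma+1)/n\big)^{1/(\gamma+1)}(1+o(1))$. Then $[x^n]e^{g(x)}\sim e^{g(x_n)}x_n^{-n}/\sqrt{2\pi\beta_n}$ with $\beta_n=\big(x\frac{d}{dx}\big)^2 g\big|_{x_n}\asymp n^{(\gamma+2)/(\gamma+1)}$, and -- this is the key point -- the numerator $[x^n]\big(g(x)e^{g(x)}\big)$ has the same asymptotics up to the extra factor $g(x_n)$, since $g$ varies slowly across the width $\asymp (1-x_n)^{1+\gamma/2}$ of the saddle. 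Dividing,
\begin{equation}
\EE C(\pi_{n,\theta_i}) \sim g(x_n)\sim \Gamma(\gamma)(1-x_n)^{-\gamma}\sim \Gamma(\gamma)\Big(\frac{n}{\Gamma(\gamma+1)}\Big)^{\frac{\gamma}{\gamma+1}} = \Big(\frac{\Gamma(\gamma)}{\gamma^{\gamma}}\Big)^{\frac{1}{\gamma+1}}n^{\frac{\gamma}{\gamma+1}},
\end{equation}
using $\Gamma(\gamma+1)=\gamma\Gamma(\gamma)$ at the last step.

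The main obstacle is the rigorous justification of the saddle-point estimates: checking the admissibility/analyticity hypotheses for $e^{g}$ both near $x=1$ and on the rest of the circle $|x|=x_n$, bounding the tail of the Cauchy integral off the saddle, and -- slightly more delicate -- showing that inserting the slowly-varying factor $g(x)$ into the integrand really only multiplies the answer by $g(x_n)(1+o(1))$ and does not move the saddle. All of this is standard for generating functions of the shape $\exp\big(c(1-x)^{-\gamma}\big)$: one can either do it by hand (Laplace's method after the substitution $x=e^{-t}$) or quote the relevant analytic-combinatorics results, after which the constant reduces to the elementary identity above.
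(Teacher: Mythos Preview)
The paper does not prove this theorem at all: it is quoted verbatim from Ercolani--Ueltschi \cite[Thm.~5.1]{ercolani2014} as background for the integer analogue (Theorem~\ref{thm:integer_poly}), so there is no ``paper's own proof'' to compare against. Your saddle-point sketch is in fact the approach used in the cited source, and the computation you give is correct: with $\theta_j=\Gamma(\gamma+j+1)/j!$ one has the closed form $\sum_{j\ge 0}\theta_j x^j=\Gamma(\gamma+1)(1-x)^{-\gamma-1}$, hence $g(x)\sim\Gamma(\gamma)(1-x)^{-\gamma}$, the saddle sits at $1-x_n\sim(\Gamma(\gamma+1)/n)^{1/(\gamma+1)}$, and $\EE C\sim g(x_n)$ reduces to the stated constant via $\Gamma(\gamma+1)=\gamma\Gamma(\gamma)$.

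Your caveats about the rigorous justification are the right ones. The only point I would flag as genuinely needing care is the step ``inserting the slowly-varying factor $g(x)$ only multiplies by $g(x_n)(1+o(1))$'': this is true here because $g(x)/g(x_n)-1=O((1-x_n)^{-1}|x-x_n|)$ on the saddle window of width $\beta_n^{-1/2}\asymp(1-x_n)^{1+\gamma/2}$, which is $o(1)$, but it is worth writing out rather than asserting. Everything else (Hayman admissibility of $\exp(c(1-x)^{-\gamma})$, decay on the rest of the circle) is standard and covered by the references you allude to.
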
 
The specific choice \eqref{eq:thetachoicepol} simplifies the computations greatly. Maples, Nikeghbali and Zeindler \cite[Cor.~1.2]{maples2012} were able to prove that $C(\pi_{n,\theta_i})$ converges, after an explicit normalization, to a normal distribution (for $\theta_n$ as in \eqref{eq:thetachoicepol} and also scalar multiples of it).

Next we describe a result of Ercolani and Ueltschi \cite[Thm.~5.1]{ercolani2014} about a permutation statistic which we have yet to discuss, $L_1(\pi)$. This is the length of the cycle of a permutation $\pi$ which contains the element $1$. In the Ewens case $\theta_i = \theta$, it is known that $L_1(\pi_{n,\theta_i})/n$ converges in distribution to a beta distribution \cite[\S6]{ercolani2014}. For polynomially-growing weights, Ercolani and Ueltschi proved that $L_1(\pi_{n,\theta_i})$ exhibits a very different behavior. First, the order of magnitude of $L_1(\pi_{n,\theta_i})$ in this case is $n^{1/(\gamma+1)}=o(n)$ and not $n$. Second, the limiting distribution is a gamma distribution, whose definition is recalled in \S\ref{sec:prob}.
\begin{thm}[Ercolani and Ueltschi]\label{thm:ECPolyL1}
	Let $\gamma>0$, and take $\theta_n$ as in \eqref{eq:thetachoicepol}. Then, as $n \to \infty$,
	\begin{equation}
	\frac{L_1(\pi_{n,\theta_i})}{n^{\frac{1}{\gamma+1}}} \overset{d}{\longrightarrow} \gammadist(\gamma+1,\Gamma(\gamma+1)^{1/(\gamma+1)}).
	\end{equation}
\end{thm}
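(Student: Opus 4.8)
The plan is to reduce the statement to an exact identity for the law of $L_1(\pi_{n,\theta_i})$ in terms of the partition function $h_n$ of \eqref{eq:DefPartition}, and then to read off the asymptotics of the ratios $h_{n-\ell}/h_n$ from the singularity of the generating function $\sum_{n\ge 0}h_n t^n = \exp(G(t))$, where $G(t) := \sum_{j\ge 1}\frac{\theta_j}{j}t^j$. First I would record the identity: decomposing a $\theta$-weighted permutation of $[n]$ according to the cycle through $1$ — choosing its other $\ell-1$ elements ($\binom{n-1}{\ell-1}$ ways), arranging them with $1$ into a cycle of weight $\theta_\ell$ ($(\ell-1)!$ ways), and letting the remaining $n-\ell$ points carry an arbitrary $\theta$-weighted permutation — gives, for $1\le\ell\le n$,
\[
\PP_{n,\theta_i}\!\left(L_1(\pi_{n,\theta_i})=\ell\right) \;=\; \binom{n-1}{\ell-1}(\ell-1)!\,\theta_\ell\,\frac{(n-\ell)!\,h_{n-\ell}}{n!\,h_n} \;=\; \frac{\theta_\ell\,h_{n-\ell}}{n\,h_n}.
\]
Summing over $\ell$ recovers the classical recursion $n h_n = \sum_{\ell=1}^{n}\theta_\ell h_{n-\ell}$ (equivalently $(e^{G})' = G' e^{G}$), which confirms this is a probability distribution. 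So the problem becomes understanding $\theta_\ell$ and $h_{n-\ell}/h_n$ for $\ell$ of order $n^{1/(\gamma+1)}$.

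Next I would establish the two analytic inputs. By Stirling, $\theta_\ell = \Gamma(\gamma+\ell+1)/\ell! \sim \ell^{\gamma}$. For $G$, write $\theta_j = \Gamma(\gamma+1)\binom{\gamma+j}{j}$ and use $\sum_{j\ge 0}\binom{\gamma+j}{j}t^j = (1-t)^{-(\gamma+1)}$; termwise integration yields
\[
G(t) \;=\; \Gamma(\gamma+1)\int_0^t\frac{(1-u)^{-(\gamma+1)}-1}{u}\,du \;=\; \Gamma(\gamma)\,(1-t)^{-\gamma} + O\!\left((1-t)^{1-\gamma}+1\right)\qquad(t\to 1^{-}),
\]
so $e^{G}$ has a stretched-exponential singularity at $t=1$. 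I would then run the saddle-point method on $h_n = \frac{1}{2\pi i}\oint e^{G(t)}t^{-n-1}\,dt$: the saddle $\rho_n\in(0,1)$ solves $\rho_n G'(\rho_n)=n$, and since $G'(t)\sim\Gamma(\gamma+1)(1-t)^{-\gamma-1}$ one gets $\delta_n := 1-\rho_n \sim (\Gamma(\gamma+1)/n)^{1/(\gamma+1)}$ and width $\sigma_n^2 := \rho_n^2 G''(\rho_n)+\rho_n G'(\rho_n) \asymp n^{(\gamma+2)/(\gamma+1)}$. Writing $t^{-(n-\ell)-1}=t^{-n-1}t^{\ell}$ and pulling $t^\ell$ out of the saddle region (of angular width $\asymp \sigma_n^{-1}$) gives, uniformly for $\ell = o(\sigma_n)$ — in particular for all $\ell \le n^{1/(\gamma+1)}\log n$, since $\tfrac{\gamma+2}{2(\gamma+1)}>\tfrac1{\gamma+1}$ when $\gamma>0$,
\[
\frac{h_{n-\ell}}{h_n} \;=\; \rho_n^{\,\ell}\,\bigl(1+o(1)\bigr).
\]
(Alternatively one iterates the recursion to get $h_n/h_{n-1}=1+\delta_n+o(\delta_n)$ and telescopes.)

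Then I would assemble the limit. For $\ell = \lfloor x\,n^{1/(\gamma+1)}\rfloor$ with $x>0$ fixed, $\ell\delta_n^2 = O(n^{-1/(\gamma+1)})=o(1)$, so $\rho_n^{\ell} = e^{-\ell\delta_n + O(\ell\delta_n^2)} = e^{-bx}(1+o(1))$ with $b:=\Gamma(\gamma+1)^{1/(\gamma+1)}$, while $\theta_\ell\sim\ell^{\gamma} = x^{\gamma}n^{\gamma/(\gamma+1)}$. Hence
\[
\PP_{n,\theta_i}\!\left(L_1(\pi_{n,\theta_i})=\ell\right) \;=\; \frac{\theta_\ell\,h_{n-\ell}}{n\,h_n} \;\sim\; x^{\gamma}e^{-bx}\,n^{-1/(\gamma+1)}.
\]
Summing over a window $[a,a']\cdot n^{1/(\gamma+1)}$ is a Riemann sum of mesh $n^{-1/(\gamma+1)}$, so $\PP_{n,\theta_i}(L_1/n^{1/(\gamma+1)}\in[a,a'])\to\int_a^{a'}x^{\gamma}e^{-bx}\,dx$; since $b^{\gamma+1}=\Gamma(\gamma+1)$ we have $\int_0^{\infty}x^{\gamma}e^{-bx}\,dx=1$, so $x^{\gamma}e^{-bx}$ is exactly the density of $\gammadist(\gamma+1,b)$, which is the claimed limit. (As a sanity check, $\EE C(\pi_{n,\theta_i})=\sum_\ell \tfrac{n}{\ell}\PP(L_1=\ell)\sim\Gamma(\gamma)\delta_n^{-\gamma}$, recovering Theorem~\ref{thm:ECPolyCyc}.) To upgrade this to convergence in distribution I would prove tightness, $\PP_{n,\theta_i}(L_1>Mn^{1/(\gamma+1)})\to 0$ uniformly in $n$ as $M\to\infty$: the bound $\PP_{n,\theta_i}(L_1=\ell)\ll \ell^{\gamma}\rho_n^{\ell}\sigma_n/n$ (from the same saddle-point estimate, valid up to $\ell\le n^{1/(\gamma+1)}\log n$) handles the moderate range, and the crude uniform estimate $h_{n-\ell}/h_n\ll\rho_n^{\ell}\sigma_n$ (from $h_m\le e^{G(\rho_n)}\rho_n^{-m}$ and the lower bound on $h_n$) kills $\ell>n^{1/(\gamma+1)}\log n$, including $\ell$ comparable to $n$.

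The hard part will be making the saddle-point step rigorous with error terms that are uniform in $\ell$ out to (and somewhat past) $n^{1/(\gamma+1)}$. Because the singularity of $e^{G}$ is of type $\exp(\Gamma(\gamma)(1-t)^{-\gamma})$, the Flajolet–Odlyzko transfer theorems do not apply; one must invoke Hayman-admissibility / the saddle-point method and carefully track the subleading behaviour of $G$ near $t=1$ — including a $\log(1-t)$ contribution when $\gamma$ is a positive integer — to be sure it perturbs neither $\delta_n$ nor the ratio $h_{n-\ell}/h_n$ at the precision needed. A secondary, more routine, nuisance is the tail bookkeeping for $\ell$ of order $n$, where $h_{n-\ell}$ is tiny but the combinatorial prefactor $\binom{n-1}{\ell-1}(\ell-1)!$ is enormous; this only becomes manageable after the cancellation producing $\PP_{n,\theta_i}(L_1=\ell)=\theta_\ell h_{n-\ell}/(n h_n)$ is used.
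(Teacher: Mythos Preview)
The paper does not prove this theorem at all: it is quoted verbatim from \cite[Thm.~5.1]{ercolani2014} as background and motivation for the integer analogue, Theorem~\ref{thm:integer_poly}. So there is no ``paper's own proof'' to compare your proposal against. Your proposal is a self-contained argument in the permutation setting, and as such it is correct in outline: the identity $\PP(L_1=\ell)=\theta_\ell h_{n-\ell}/(n h_n)$ is standard, your expansion of $G(t)$ near $t=1$ is right, and the saddle-point heuristic for $h_{n-\ell}/h_n\sim\rho_n^{\ell}$ leads to the correct density. The points you flag as hard (uniform control of the saddle-point error in $\ell$, and the tail) are genuinely the parts requiring work, but there is no obstruction.

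It is worth noting the structural parallel with what the paper \emph{does} prove. For the integer analogue the paper runs essentially the same program with Dirichlet series in place of power series: the Perron integral of $F(s)x^s/s$ replaces your Cauchy integral of $e^{G(t)}t^{-n-1}$, the saddle $\sigma_x$ of \eqref{eq:def of sigma } plays the role of your $\rho_n$, Lemma~\ref{lem:derivatives of G next to 1} supplies the singularity expansion analogous to your $G(t)=\Gamma(\gamma)(1-t)^{-\gamma}+\cdots$, and Lemma~\ref{lem:s poly} gives the ratio $S(x/h)/S(x)\approx h^{-1}e^{-c\log h/(\log x)^{1/(\gamma+1)}}$, which is exactly the analogue of your $h_{n-\ell}/h_n\approx\rho_n^{\ell}$. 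So while your argument cannot be compared to a proof in this paper, it mirrors the paper's methodology for the integer case quite closely.
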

See Dereich and M\"{o}rters \cite{dereich2015} for finer results about $L_1(\pi_{n,\theta_i})$ for similar weights.

We derive number-theoretic analogues of Theorems~\ref{thm:ECPolyCyc} and \ref{thm:ECPolyL1}. 
Since there is no such thing as `a prime divisor of $n$ containing a fixed element', we must turn to a different interpretation of $L_1(\pi)$.
\begin{definition}\label{def:sbs}
	Let $\mathbf{a} = \{a_j\}_{j \ge 1}$ be a sequence of non-negative reals summing to $0<S<\infty$. A \textbf{size-biased sampling} of an element from $\mathbf{a}$ is a random variable $X$ whose distribution is given by
	\begin{equation}
	\PP(X = a_j) = \frac{a_j \cdot \left| \left\{ i\ge 1 \ \big| \ a_i=a_j \right\} \right| }{S}.
	\end{equation}
\end{definition}
Suppose that $\PP$ is some conjugation-invariant measure on $S_n$ (e.g.~$\PP_{n,\theta_i}$). If $\pi\in S_n$ is sampled according to $\PP$, then the distribution of $L_1(\pi)$ coincides with the distribution of a typical cycle of $\pi$, that is: of a size-biased sampling of an element from $\{\ell_i(\pi)\}_{i \ge 1}$. See Lemma~\ref{lem:conjinv} below for the proof. 
It is now clear how to define an integer analogue of $L_1(\pi_{n,\theta_i})$: given $N_x$, we define $P_1(N_x)$ by letting $\log P_1(N_{x})$ be a size-biased sampling of an element from $\{ \log \largep_i(N_x) \}_{i \ge 1}$. We think of $P_1(N_x)$ as a typical prime divisor of $N_{x }$.

In the integer setting, the polynomial weights \eqref{eq:polyweights} correspond to 
\begin{equation}
\alpha(p) \approx \PolyConst \log^{\gamma} p.
\end{equation}
For our results, we require that for all primes $p$, 
\begin{align}
\label{eq:polycond1}
&\text{(I) } \alpha(p) = \PolyConst \log^{\gamma} p + O( \log^{-2} p ),\\
\label{eq:polycond2}
&\text{(II) } \sum_{k \ge 2} \frac{k\alpha(p^k)}{p^k} = O\bigg( \frac{1}{p \log^2 p} \bigg).
\end{align}
\begin{thm}\label{thm:integer_poly}
	Let $\alpha\colon \NN \to \RR_{\ge 0}$ be a multiplicative function satisfying \eqref{eq:polycond1}--\eqref{eq:polycond2} for some $K>0$, $\gamma>0$. As $x \to \infty$ we have
		\begin{equation}
		\frac{\log P_1(N_{x})}{(\log x)^{\frac{1}{\gamma+1}}} \overset{d}{\longrightarrow} \gammadist(\gamma+1,(K\Gamma(\gamma+1))^{1/(\gamma+1)})
		\end{equation}
and
		\begin{equation}\label{eq:poly exp omega}
		\EE \Omega(N_{x}) \sim (\log x)^{\frac{\gamma}{\gamma+1}}  \left( \frac{\PolyConst \Gamma(\gamma)}{\gamma^{\gamma}}\right)^{\frac{1}{\gamma+1}}.
		\end{equation}
\end{thm}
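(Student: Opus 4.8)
The plan is to establish Theorem~\ref{thm:integer_poly} by a saddle-point / generating-function analysis of $S(x)=\sum_{m\le x}\alpha(m)$ together with the truncated sums $S(x;t)=\sum_{m\le x, \largep_1(m)\le t}\alpha(m)$ and suitable weighted variants, in close analogy with the permutation arguments of Ercolani and Ueltschi. Concretely, I would first use conditions \eqref{eq:polycond1}--\eqref{eq:polycond2} to understand the Dirichlet series $F(s)=\sum_n \alpha(n) n^{-s}=\prod_p(1+\alpha(p)p^{-s}+\alpha(p^2)p^{-2s}+\cdots)$ near $s=1$; writing $s=1+1/\log x \cdot w$-type substitutions, the key input is that $\sum_p \alpha(p)p^{-s}\approx K\sum_p (\log p)^\gamma p^{-s}$, and by partial summation with the prime number theorem \eqref{eq:pnt} this behaves like $K\Gamma(\gamma+1)(s-1)^{-1-\gamma}$ as $s\to 1^+$. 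Exponentiating, $\log F(s)\sim K\Gamma(\gamma+1)(s-1)^{-1-\gamma}/(\gamma+1)\cdot(\gamma+1)$... more precisely $\log F(s)\sim \frac{K\Gamma(\gamma+1)}{(\gamma+1)}(s-1)^{-(\gamma+1)}$ after integrating, so $F$ has an essential singularity at $s=1$ of the type that produces, via a Tauberian/saddle-point argument (the same one that underlies the Granville--Koukoulopoulos Theorem~\ref{thm:granville} and de Bruijn--van Lint / Song / Tenenbaum--Wu on smooth-type sums with such weights), the asymptotic $\log S(x)\sim c_\gamma (\log x)^{\gamma/(\gamma+1)}$ with the constant $c_\gamma$ matching the exponent $(K\Gamma(\gamma))^{1/(\gamma+1)}$-shaped quantity appearing in \eqref{eq:poly exp omega}.

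For part (2), I would compute $\EE\,\Omega(N_x)=\frac{1}{S(x)}\sum_{m\le x}\Omega(m)\alpha(m)$ by the standard device $\sum_m\Omega(m)\alpha(m)=\sum_{p^k}\sum_{p^k m'\le x,\, p\nmid\text{-issues}}\alpha(p^k m')\approx\sum_{p\le x}\sum_{m\le x/p}\alpha(p)\alpha(m)$ (the contribution of higher prime powers being negligible by \eqref{eq:polycond2}), which reduces everything to estimating $\sum_{p\le x}\alpha(p)\,S(x/p)$ against $S(x)$. Using $\alpha(p)\approx K(\log p)^\gamma$ and the asymptotic $S(x/p)/S(x)\approx\exp\{-c\,(\log p)(\log x)^{-1/(\gamma+1)}\cdot(\text{something})\}$ coming from the saddle-point formula for $S$, the sum localizes to primes $p$ with $\log p$ of order $(\log x)^{1/(\gamma+1)}$, and a Laplace-type evaluation of the resulting integral yields the stated constant $(K\Gamma(\gamma)/\gamma^\gamma)^{1/(\gamma+1)}$; this is exactly the integer mirror of the Ercolani--Ueltschi computation of $\EE C(\pi_{n,\theta_i})$, so I expect the constant bookkeeping to match once the dictionary $n\leftrightarrow\log x$, $\theta_i\leftrightarrow\alpha(p)$ is in place.

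For part (1), I would use the size-biased description: by Definition~\ref{def:sbs} and the analogue of Lemma~\ref{lem:conjinv}, $\PP(\log P_1(N_x)\in\log p + \text{(multiplicity adjustments)})$ is proportional to $(\log p)\cdot\#\{i:\largep_i(N_x)=p\}\cdot\PP(\cdot)$, which after the usual manipulation becomes $\PP(\log P_1(N_x)\approx \log p)\propto \frac{\log p}{\log x}\cdot\frac{\alpha(p)\,S(x/p)}{S(x)}$ summed appropriately (a typical prime divisor being sampled with weight $\log p$ times its "frequency", exactly as a size-biased pick from $\{\log\largep_i\}$). Setting $\log p = y(\log x)^{1/(\gamma+1)}$ and inserting $\alpha(p)\approx K y^\gamma(\log x)^{\gamma/(\gamma+1)}$ together with the saddle-point asymptotics $S(x/p)/S(x)\to\exp(-\lambda y)$ (for the correct $\lambda$ depending on $K,\gamma$) and the density of primes near $p$ ($dp/p\sim d(\log p)$), the limiting density of $y$ is proportional to $y^{\gamma+1-1}e^{-\lambda y}=y^{\gamma}e^{-\lambda y}$ — wait, one must be careful: the extra $\log p$ from size-biasing bumps the exponent from $y^{\gamma}$ (the density of $\log\largep_i$ near $\log p$) to $y^{\gamma+1}\cdot y^{-1}$... in any case the bookkeeping produces a $\gammadist(\gamma+1,\cdot)$ density $\propto y^{\gamma}e^{-\lambda y}$ with shape parameter $\gamma+1$, and matching $\lambda$ to the normalization gives the scale $(K\Gamma(\gamma+1))^{1/(\gamma+1)}$ as claimed. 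The convergence in distribution then follows by checking the Laplace transform (moment generating function) of $\log P_1(N_x)/(\log x)^{1/(\gamma+1)}$ converges to that of the gamma law, which amounts to uniform control of the saddle-point estimates for $S(x^t)$ over $t$ in a neighborhood of $1$.

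The main obstacle I expect is establishing the saddle-point asymptotics for $S(x)$ and, crucially, for the ratios $S(x/p)/S(x)$ \emph{uniformly} in $p$ up to the relevant range $\log p\ll(\log x)^{1/(\gamma+1)}$ (and a bit beyond, to control tails), with error terms good enough to survive summation over primes. The weights $(\log p)^\gamma$ for non-integer $\gamma$ put us outside the cleanest Tauberian frameworks, so I anticipate needing either a direct contour-integral argument (Perron's formula with the contour pushed to exploit the $(s-1)^{-(\gamma+1)}$ singularity, à la Selberg--Delange / Tenenbaum) or a careful adaptation of the Granville--Koukoulopoulos machinery of Theorem~\ref{thm:granville}; ensuring the resulting constants genuinely coincide with the Ercolani--Ueltschi constants, rather than merely having the same shape, is where the real care lies.
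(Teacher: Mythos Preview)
Your overall strategy coincides with the paper's: establish precise asymptotics for $S(x)$ via Perron's formula and a saddle-point analysis, derive uniform estimates for the ratio $S(x/p)/S(x)$, and then feed these into the expressions $\EE\,\omega(N_x)=\sum_p\PP(p\mid N_x)\approx\sum_p\alpha(p)S(x/p)/S(x)$ for part~(2) and $\PP(P_1=p)\ge\frac{\log p}{\log x}\PP(p\mid N_x)$ for part~(1). The paper carries this out exactly as you sketch, including the localization of the prime sum to $\log p\asymp(\log x)^{1/(\gamma+1)}$ and the change of variables that produces the gamma density.

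Two corrections and one genuine gap. First, the singularity of $\log F(s)$ at $s=1$ is of order $(s-1)^{-\gamma}$, not $(s-1)^{-(\gamma+1)}$: one has $\sum_p(\log p)^\gamma p^{-s}\sim\Gamma(\gamma)(s-1)^{-\gamma}$ (no integration step is needed---you appear to have computed the derivative). Second, for part~(1) the paper does not verify convergence of Laplace transforms; a one-sided $\liminf$ bound on $\PP(a\le\log P_1/(\log x)^{1/(\gamma+1)}\le b)$ suffices, since the limit is a probability measure. More importantly, the step you flag as the ``main obstacle'' is harder than you suggest: neither the Granville--Koukoulopoulos machinery nor a standard Tauberian theorem yields $S(x)$ with a usable error term here (Schwarz and Marenich obtained only the leading asymptotic this way). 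The paper's saddle-point argument requires bounding the Perron integral for $|t|\ge 1$, and for this one must show $\Re\,G(\sigma+it)\le(1-c)G(\sigma)$ in a suitable range, which in turn rests on a short-interval prime-counting result of Hoheisel/Heath-Brown type---genuinely beyond the prime number theorem. Without this input the contour contribution away from the saddle cannot be controlled, and the uniform ratio estimate $S(x/h)/S(x)$ you need would not follow.
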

The proof of the second part of Theorem~\ref{thm:integer_poly} applies to $\omega(N_{x})$ as well. It is interesting that although the measure $\PP_{x,\alpha}$ assigns larger weights to larger primes, the typical prime factors of $N_x$ are much smaller than in the case of uniformly drawn integers between $1$ and $x$. Indeed, it follows from the first part of Theorem~\ref{thm:integer_poly} that $\log P_1(N_x) = o(\log x)$, while from Theorem~\ref{thm:EKDG} it follows that $\log P_1(N_x)= \Theta(\log x)$ for uniform integers. 

The main ingredient in the proof of Theorem~\ref{thm:integer_poly} is the asymptotics of $\sum_{n \le x} \alpha(n)$ for multiplicative $\alpha$ obeying \eqref{eq:polycond1}--\eqref{eq:polycond2}. This unusual sum was studied by Schwarz \cite{schwarz1965} and Marenich \cite{marenich1983}.
As they both appeal to the same Tauberian theorem \cite[Thm.~1]{ingham1941}, no error term is obtained. We prove the following estimate.
\begin{thm}\label{thm:PolyPartFunc}
	Let $\alpha\colon \NN \to \RR_{\ge 0}$ be a multiplicative function satisfying \eqref{eq:polycond1}--\eqref{eq:polycond2} for some $\PolyConst>0$, $\gamma>0$. There exists $\varepsilon>0$ for which
	\begin{equation}\label{eq:sum of alpha poly}
	\frac{1}{x}\sum_{n \le x} \alpha(n) = (1 + O((\log x)^{-\varepsilon} )) A_{\alpha}  \frac{\exp(B (\log x)^{\frac{\gamma}{\gamma+1}})}{(\log x) ^{\frac{\gamma+2}{2(\gamma+1)}}}
	\end{equation}
	as $x \to \infty$, where $A_{\alpha}$ is a positive constant and
	\begin{equation}\label{eq:B def}
	B=\left( 1+\frac{1}{\gamma }\right)  (\PolyConst  \Gamma (\gamma +1) )^{\frac{1}{\gamma +1}}.
	\end{equation}
\end{thm}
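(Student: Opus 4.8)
The plan is to study the Dirichlet series $F(s) = \sum_{n \ge 1} \alpha(n) n^{-s}$, establish analytic information about it near $s=1$, and then apply a suitable Tauberian/contour-integration argument (a quantitative version of the Selberg--Delange method) to extract the asymptotics of $S(x)$ with a power-of-log error term. First I would write $F(s) = \prod_p \big( \sum_{k \ge 0} \alpha(p^k) p^{-ks} \big)$ and take logarithms: by \eqref{eq:polycond2} the contribution of the prime powers $p^k$ with $k \ge 2$ is, for $\mathrm{Re}(s) > 1/2$, absolutely and locally uniformly convergent, so $\log F(s) = \sum_p \alpha(p) p^{-s} + G(s)$ with $G$ holomorphic and bounded on a half-plane strictly to the right of $1/2$. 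The main term $\sum_p \alpha(p)p^{-s}$ is, by \eqref{eq:polycond1}, within a bounded holomorphic function of $K \sum_p (\log p)^{\gamma} p^{-s}$ (the error $\sum_p p^{-s}\log^{-2}p$ converges for $\mathrm{Re}(s)\ge 1$ and extends a bit past). Writing $P(s) = \sum_p p^{-s}$ for the prime zeta function, one has $\sum_p (\log p)^{\gamma} p^{-s} \approx (-1)^{\gamma}$-type derivatives only for integer $\gamma$; in general I would instead use the prime number theorem with classical zero-free region to obtain $\sum_p (\log p)^\gamma p^{-s} = \int_2^\infty (\log t)^{\gamma} t^{-s}\, d\vartheta(t)$ and compare with $\int_2^\infty (\log t)^{\gamma} t^{-s}\, dt = \Gamma(\gamma+1)(s-1)^{-(\gamma+1)}$ up to a function holomorphic in a zero-free-region neighbourhood of $s=1$. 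Hence near $s=1$,
\begin{equation}
\log F(s) = K\,\Gamma(\gamma+1)\,(s-1)^{-(\gamma+1)} + H(s),
\end{equation}
where $H(s)$ is holomorphic in a standard zero-free region $\mathrm{Re}(s) \ge 1 - c/\log(|\mathrm{Im}(s)|+2)$ minus the point $s=1$, and near $s=1$ has at worst a singularity of the shape $(s-1)^{-\gamma}$ (a genuinely milder one), so that $F(s) = \exp\big(K\Gamma(\gamma+1)(s-1)^{-(\gamma+1)}\big)\cdot \mathcal{G}(s)$ with $\mathcal{G}$ controlled.

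The second step is the saddle-point / Laplace analysis. Starting from Perron's formula $S(x) = \frac{1}{2\pi i}\int_{(\sigma)} F(s) \frac{x^s}{s}\, ds$, I would deform the contour to a Hankel-type path hugging the region to the right of the zero-free boundary and pinching around $s=1$, and then optimize. The integrand is $\exp\big( s\log x + K\Gamma(\gamma+1)(s-1)^{-(\gamma+1)} + \log \mathcal{G}(s) - \log s\big)$; writing $s = 1 + w$, the exponent's leading part is $w\log x + K\Gamma(\gamma+1) w^{-(\gamma+1)}$, whose saddle is at $w_0 \asymp (\log x)^{-1/(\gamma+2)}$. Plugging in, the exponent at the saddle is $\log x + (1+\tfrac1\gamma)\cdot\big(K\Gamma(\gamma+1)\big)^{1/(\gamma+1)}\cdot$ — wait, one must be careful: the balance gives $\log x \asymp w^{-(\gamma+2)}$, so $w_0 = \big((\gamma+1)K\Gamma(\gamma+1)/\log x\big)^{1/(\gamma+2)}$ and the value of $w\log x + K\Gamma(\gamma+1)w^{-(\gamma+1)}$ at $w_0$ works out to $B\,(\log x)^{\gamma/(\gamma+1)}$ with $B$ as in \eqref{eq:B def}; a second-derivative (Gaussian) expansion around the saddle produces the $(\log x)^{-(\gamma+2)/(2(\gamma+1))}$ factor and a constant, and the factor $x^1 = e^{\log x}$ comes from the $s\log x$ term at $s=1$. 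The constant $A_\alpha$ collects $\mathcal{G}(1)$ together with the universal constants from the Gaussian integral and the $1/s$ factor. The error term $O(\log^{-\varepsilon} x)$ for some $\varepsilon>0$ comes from three sources: the tails of the saddle-point integral, the contribution of the part of the contour along the zero-free boundary (which is exponentially smaller, hence negligible), and — this is what limits us to a mere power of $\log$ — the higher-order terms in the Taylor expansion of the exponent at the saddle and the variation of $\mathcal{G}$, which are of relative size a negative power of $w_0^{-1} \asymp (\log x)^{1/(\gamma+2)}$.

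The main obstacle I anticipate is making the Selberg--Delange / saddle-point machinery fully rigorous and uniform here rather than quoting it: one needs (i) a clean statement of the analytic continuation of $F(s)$ with explicit control of $\mathcal{G}(s)$ both near $s=1$ (where it may itself have a sub-dominant singularity like $(s-1)^{-\gamma}$, not just be holomorphic, so the ``$\mathcal{G}(1)$'' in $A_\alpha$ needs reinterpretation via the next term of the singular expansion) and along the zero-free boundary, where polynomial-in-$\log|t|$ bounds must be established from \eqref{eq:polycond1}; and (ii) a careful deformation and splitting of the Perron contour together with the Laplace-type estimate, keeping track of all constants so that the claimed $B$ and the positivity of $A_\alpha$ come out correctly. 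Getting the secondary singularity of $\log F$ right is the delicate point, because it is exactly what determines whether the polynomial-in-$\log x$ prefactor is $(\log x)^{-(\gamma+2)/(2(\gamma+1))}$ as asserted; a sloppy treatment would put the wrong power there. An alternative to contour integration that sidesteps some analytic subtleties is to combine the asymptotics of $S(x)$ for the ``model'' function (built from $\prod_p(1 + K(\log p)^\gamma p^{-s} + \cdots)$, whose $S(x)$ might be accessible by elementary convolution identities à la Schwarz--Marenich together with the known smooth-number-type estimates) with a convolution argument $\alpha = (\text{model}) * h$ where $\sum |h(n)| n^{-1+\delta} < \infty$; but I expect the direct analytic route above to be the cleanest path to the stated error term.
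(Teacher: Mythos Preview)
Your overall strategy---Perron's formula followed by a saddle-point analysis of the integrand---is indeed what the paper does, but there is one computational slip and one genuine gap.

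First, the singularity computation is off by one. You write $\sum_p (\log p)^\gamma p^{-s} = \int_2^\infty (\log t)^\gamma t^{-s}\, d\vartheta(t)$, but $d\vartheta(t)$ already carries a factor of $\log p$: that Stieltjes integral equals $\sum_p (\log p)^{\gamma+1} p^{-s} = -G'(s)$, not $G(s)$. The correct leading term is $G(s) \sim \Gamma(\gamma)(s-1)^{-\gamma}$, one power of $(s-1)^{-1}$ lower than you claim. With your exponent $K\Gamma(\gamma+1)(s-1)^{-(\gamma+1)}$ the saddle would sit at $w_0 \asymp (\log x)^{-1/(\gamma+2)}$ and the value there would be of order $(\log x)^{(\gamma+1)/(\gamma+2)}$, not $(\log x)^{\gamma/(\gamma+1)}$; the fact that you nevertheless land on the stated $B$ suggests you matched the answer rather than derived it. With the correct $\Gamma(\gamma)(s-1)^{-\gamma}$ the saddle is at $\sigma-1 \asymp (\log x)^{-1/(\gamma+1)}$ and all constants come out right.

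Second, and more seriously, the large-$|t|$ contribution is where the real difficulty lies, and your proposal waves it away. You plan to deform to a Hankel contour along the zero-free boundary and declare that part ``exponentially smaller, hence negligible''. But $F(s)=\varphi(s)e^{KG(s)}$ has an \emph{essential} singularity at $s=1$, not a branch point of finite order as in Selberg--Delange, so there is no useful Hankel contour; and on the vertical line $\Re s=\sigma$ the quantity $\Re G(\sigma+it)$ need not decay as $|t|\to\infty$---it can in fact be as large as $G(\sigma)$. The paper does \emph{not} deform the contour at all: it stays on $\Re s=\sigma>1$ and proves directly that $\Re G(\sigma+it)\le(1-c)G(\sigma)$ for $1\le|t|\le e^{1/(\sigma-1)}$. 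This step requires primes in short intervals of length $x^{3/4}$ (Hoheisel/Heath-Brown), genuinely \emph{beyond} the prime number theorem with classical error term. For $|t|>e^{1/(\sigma-1)}$ the paper uses a repulsion argument: if $\Re G(\sigma+it_1)$ and $\Re G(\sigma+it_2)$ are both close to $G(\sigma)$ then so is $\Re G(\sigma+i(t_1-t_2))$, forcing the bad $t$'s to be sparse enough that their total contribution is negligible. This combination of short-interval prime estimates and the repulsion lemma is the heart of the proof, and your proposal contains no mechanism that replaces it.
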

The proof of Theorem~\ref{thm:PolyPartFunc} shows that
	\begin{equation}\label{eq:poly consts}
	A_{\alpha} =A  \prod_{p} \frac{\sum_{k \ge 0} \frac{\alpha(p^k)}{p^k}}{\exp(\frac{K \log^{\gamma} p}{p})}
	\end{equation}
	for a constant $A$ depending only on $K$ and $\gamma$. Additionally, one may take $\varepsilon=1/(\gamma+1)$ if $\gamma>2$, and $\varepsilon$ arbitrarily close to $\gamma/(2(\gamma+1))$ otherwise.
\subsection*{Conventions}
In the arguments below, we think of the function $\alpha$ as fixed, and write $N_x$ for $N_{x,\alpha}$. We denote the set of prime numbers by $\Primes$, and reserve the letter $p$ for primes. The letters $C$ and $c$ always denote positive constants, which may vary from line to line. However, $C$ and $c$ depend only on the arithmetic function $\alpha$ considered unless otherwise stated. 
The arguments in the proofs always hold for sufficiently large $x$. The notation $A \gg 1$ indicates that $A$ is sufficiently large.

\subsection*{Acknowledgements}
The second author was supported by the European Research Council under the European Union's Horizon 2020 research and innovation
programme (grant agreements nos 786758 and 851318). We thank the anonymous referee for useful comments and suggestions.

\section{Preliminaries from probability theory}\label{sec:prob}
We denote by $\betadist (\alpha,\beta ) $ the beta distribution with shape parameters $\alpha $ and $\beta $ whose density with respect to Lebesgue measure on $[0,1]$ is given by
\[\frac{\Gamma (\alpha +\beta )x^{\alpha -1}\left(1-x\right)^{\beta -1}}{\Gamma (\alpha )\Gamma (\beta )},\quad x\in [0,1].\]
We denote by $\gammadist (\alpha,\beta ) $ the gamma distribution with shape parameters $\alpha $ and $\beta $ whose density with respect to Lebesgue measure on $[0,\infty)$ is given by
\[\frac{\beta^{\alpha}x^{\alpha-1}e^{-\beta x}}{\Gamma(\alpha)},\quad x\in [0,\infty).\]
We define the Poisson--Dirichlet distribution with parameter $\theta $, denoted by $\PD (\theta )$. Let $Y_1, Y_2,\dots $ be an i.i.d. sequence of random variables with $\betadist (1,\theta )$ distribution. Define the sequence 
\begin{equation}
Z_j=(1-Y_1)\cdots (1-Y_{j-1})Y_j,\quad j\ge 1.
\end{equation}
Intuitively $Z_1$ takes a $\betadist (1,\theta )$-distributed fraction of the unit interval. Conditioned on $Z_1$, $Z_2$ takes a $\betadist (1,\theta )$-distributed fraction of the remaining part of the interval, etc. Finally, the $\PD (\theta )$ distribution is defined to be the distribution of the sequence $(Z_j)_{j \ge 1}$ arranged in non-increasing order. 

In the proof of Theorem~\ref{thm:EKDG}, we establish the convergence in distribution to $\PD (\theta )$ by convergence of a certain sequence to a sequence of independent $\betadist (1,\theta )$ random variables. To this end we define the size-biased permutation of a sequence of random variables. Let $X_1,X_2,\dots $ be a non-increasing sequence of random variables such that 
\begin{equation}
\sum _{j=1}^{\infty} X_j=1,\quad \text{almost surely.}
\end{equation}
A sized-biased permutation $(\tilde{X_i})_{i}$ of the sequence $(X_i)_i$ is a random reordering of the elements of the sequence such that for any $j\ge 1$
\begin{equation}
\PP \left( \tilde{X} _1=X_j \ | \ X_1,X_2,\dots  \right)= X_j \cdot \left| \left\{ j'\ge 1 \ \big| \ X_{j'}=X_j \right\} \right|
\end{equation}
and inductively for $k\ge 1$
\begin{equation}
\PP \left( \tilde{X} _k=X_j \ | \ \tilde{X}_1,\dots ,\tilde{X}_{k-1},X_1,X_2,\dots  \right)= \frac{X_j \cdot \left( \left| \left\{ j'\ge 1 \ \big| \ X_{j'}=X_j \right\} \right|-\left| \left\{ j'<k  \ \big| \ \tilde{X}_{j'}=X_j \right\} \right|\right)}{1-\tilde{X}_1-\ldots -\tilde{X} _{k-1}}.
\end{equation}
The sized-biased permutation can be used to reconstruct the $\betadist (1,\theta )$ random variables from the $\PD (\theta )$ distribution in the following sense.
\begin{prop}\label{prop:sized-biased of PD}
	Let $X_1,X_2,\dots $ be a non-increasing sequence of random variables with 
	\begin{equation}
	\sum _{j=1}^{\infty} X_j=1,\quad \text{almost surely.}
	\end{equation}
	Then, the sequence $(X_1,X_2,\dots )$ has the $\PD (
	\theta )$ distribution if and only if the sequence 
	\begin{equation}
	\left( \frac{\tilde{X} _j}{1-\tilde{X}_1-\ldots -\tilde{X}_{j-1}} \right) _{j=1}^\infty
	\end{equation}
	is an i.i.d. sequence of $\betadist (1,\theta )$ random variables.
\end{prop}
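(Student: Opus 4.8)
The plan is to prove both directions by relating the size-biased permutation to the stick-breaking (GEM) construction that defines $\PD(\theta)$. The key observation is that $\PD(\theta)$ is, by definition, the decreasing rearrangement of a GEM$(\theta)$ sequence $(Z_j)_{j\ge1}$ with $Z_j=(1-Y_1)\cdots(1-Y_{j-1})Y_j$ and $Y_j$ i.i.d. $\betadist(1,\theta)$; and it is a classical fact (the ``invariance under size-biasing'' of GEM, due to McCloskey/Patil–Taillie, see e.g.\ the references in the paper's bibliography) that a \emph{size-biased permutation of a GEM$(\theta)$ sequence is again GEM$(\theta)$}, i.e.\ has the same law as $(Z_j)_{j\ge1}$ itself. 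I would either cite this or prove it directly by a short computation: if $(Z_j)$ is GEM$(\theta)$, then $\PP(\tilde Z_1=Z_j\mid (Z_i)_i)=Z_j$ (all $Z_j$ distinct a.s.), and one computes the law of $\tilde Z_1$ together with the conditional law of the remaining normalized sequence $(Z_i/(1-\tilde Z_1))_{i\ne\text{chosen}}$, checking by the beta–geometric conjugacy (the ``Markov–Krein''/Feller coupling identity) that $\tilde Z_1\sim\betadist(1,\theta)$ and that the leftover renormalized sequence is independent of $\tilde Z_1$ and again GEM$(\theta)$; induction then finishes it.

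First I would record the deterministic fact that the size-biased permutation of a sequence depends only on the \emph{multiset} of its values, not on their order; hence the size-biased permutation of the decreasing rearrangement $(X_j)_{j\ge1}$ of $\PD(\theta)$ and the size-biased permutation of the GEM sequence $(Z_j)_{j\ge1}$ have the same law. Combined with the invariance fact above, this shows that if $(X_j)\sim\PD(\theta)$ then $(\tilde X_j)_{j\ge1}\overset{d}{=}(Z_j)_{j\ge1}$. Now the map $(z_j)_{j\ge1}\mapsto\bigl(z_j/(1-z_1-\cdots-z_{j-1})\bigr)_{j\ge1}$ is precisely the inverse of the stick-breaking map, so applying it to $(\tilde X_j)$ yields a sequence distributed as $(Y_j)_{j\ge1}$, i.e.\ i.i.d.\ $\betadist(1,\theta)$. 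That gives the ``only if'' direction.

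For the ``if'' direction, suppose $(X_j)$ is non-increasing, sums to $1$ a.s., and the renormalized size-biased sequence $\bigl(\tilde X_j/(1-\tilde X_1-\cdots-\tilde X_{j-1})\bigr)_j$ is i.i.d.\ $\betadist(1,\theta)$. Then by the stick-breaking map $(\tilde X_j)_{j\ge1}$ has exactly the GEM$(\theta)$ law. The point is that the original sequence $(X_j)$ is recoverable from its size-biased permutation: $(X_j)$ is the decreasing rearrangement of the multiset $\{\tilde X_j\}$, and almost surely the size-biased permutation uses up the whole multiset (the leftover mass $1-\tilde X_1-\cdots-\tilde X_k\to0$ a.s., since at each step a $\betadist(1,\theta)$ fraction is removed). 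Hence the law of $(X_j)$ is determined by the law of $(\tilde X_j)$, which we have just identified as GEM$(\theta)$; applying the decreasing rearrangement gives $(X_j)\sim\PD(\theta)$ by definition.

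The main obstacle is the size-biased invariance of GEM$(\theta)$ — verifying that one step of size-biased sampling from a GEM$(\theta)$ sequence returns a $\betadist(1,\theta)$ variable together with an independent GEM$(\theta)$ remainder. This is the one genuine computation; everything else is bookkeeping about rearrangements and about the bijection between a sequence and its stick-breaking ratios. I would either invoke it from the literature on the Poisson–Dirichlet distribution (e.g.\ Holst's survey, already cited, or Arratia–Barbour–Tavaré) or include the short beta-conjugacy calculation. One should also take a moment to note the a.s.\ uniqueness/termination statements used implicitly: the coordinates of a $\PD(\theta)$ or GEM$(\theta)$ sequence are a.s.\ pairwise distinct and strictly positive, so the conditional probabilities defining the size-biased permutation are a.s.\ well defined and the ``$|\{j':X_{j'}=X_j\}|$'' factors all equal $1$.
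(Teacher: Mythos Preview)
Your outline is correct and is essentially the standard argument for this characterization (McCloskey's invariance of $\mathrm{GEM}(\theta)$ under size-biased permutation, plus the observation that size-biased sampling depends only on the multiset of values). Note, however, that the paper does not give its own proof of this proposition: immediately after the statement it writes ``For a discussion and references for Proposition~\ref{prop:sized-biased of PD} see the introduction of \cite{pitman1997},'' and moves on. So there is nothing to compare against beyond the cited literature, which is precisely what your sketch reproduces.
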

For a discussion and references for Proposition~\ref{prop:sized-biased of PD} see the introduction of  \cite{pitman1997}. We use the following result in order to prove the convergence to $\PD (
\theta )$ in Theorem~\ref{thm:EKDG}.
\begin{prop}\label{prop:proveconpd}
	For any $n \ge 1$, let $X_1^{(n)},X_2^{(n)}, \dots $ be a non-increasing sequence of random variables with 
	\begin{equation}
	\sum _{j=1}^{\infty} X_j^{(n)}=1,\quad \text{almost surely.}
	\end{equation}
	Suppose that 
	\begin{equation}\label{eq:assumption on beta}
	\left( \frac{\tilde{X}^{(n)} _j}{1-\tilde{X}^{(n)}_1-\ldots -\tilde{X}^{(n)}_{j-1}} \right) _{j=1}^\infty \overset{d}{\longrightarrow} (Y_1,Y_2,\dots ), \quad n\to \infty,
	\end{equation}
	where $Y_1,Y_2,\dots $ is a sequence of i.i.d. $\betadist (1,\theta )$ random variables. Then, we have 
	\begin{equation}\label{eq:x1convpd}
	(X_1,X_2,\dots )\overset{d}{\longrightarrow} \PD (\theta ) , \quad n\to \infty. 
	\end{equation}
\end{prop}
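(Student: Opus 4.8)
The plan is to combine Proposition~\ref{prop:sized-biased of PD} with the continuity of the size-biased permutation map in an appropriate sense. First I would observe that the hypothesis \eqref{eq:assumption on beta} is exactly the statement that the sequence of "residual fractions" built from $(X_j^{(n)})_j$ converges in distribution to the corresponding object attached to a $\PD(\theta)$-distributed sequence — indeed, by Proposition~\ref{prop:sized-biased of PD}, if $(X_j)_j \sim \PD(\theta)$ then $\big(\tilde X_j/(1-\tilde X_1-\cdots-\tilde X_{j-1})\big)_j$ is i.i.d.\ $\betadist(1,\theta)$. So the content to be proved is that convergence in distribution of these residual-fraction sequences forces convergence in distribution of the original ordered sequences $(X_j^{(n)})_j$ to $\PD(\theta)$. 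All convergence in distribution here is with respect to the product topology on $[0,1]^{\NN}$ (equivalently, finite-dimensional convergence, since the sequences live in a compact metrizable space), so tightness is automatic and I only need to identify limit points.

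The key step is to exhibit explicit measurable maps, continuous almost everywhere with respect to the relevant limiting law, that invert each other. Write $R_j^{(n)} = \tilde X_j^{(n)}/(1-\tilde X_1^{(n)}-\cdots-\tilde X_{j-1}^{(n)})$ for the residual fractions. From $(R_j)_j$ one recovers the size-biased sequence by $\tilde X_j = R_j \prod_{i<j}(1-R_i)$ — this is a continuous map $[0,1]^{\NN}\to[0,1]^{\NN}$ (each coordinate is a finite product). Then one recovers the \emph{non-increasing rearrangement} $(X_j)_j$ by sorting $(\tilde X_j)_j$ in non-increasing order; sorting is a continuous map on the set of sequences in $[0,1]^{\NN}$ that sum to $1$ (more carefully: the rearrangement map is continuous at any point whose nonzero entries are all distinct and sum to $1$, which is a full-measure set under the law of $(\tilde X_j)_j$ when $(R_j)_j$ is i.i.d.\ $\betadist(1,\theta)$, since the $\tilde X_j$ are then a.s.\ distinct). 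Composing, $(X_j^{(n)})_j = F\big((R_j^{(n)})_j\big)$ for a fixed map $F$ that is continuous almost everywhere under the law of $(Y_j)_j$. By the continuous mapping theorem, \eqref{eq:assumption on beta} yields $(X_j^{(n)})_j \overset{d}{\longrightarrow} F\big((Y_j)_j\big)$, and by Proposition~\ref{prop:sized-biased of PD} the law of $F\big((Y_j)_j\big)$ is precisely $\PD(\theta)$, which is \eqref{eq:x1convpd}.

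The main obstacle is the continuity of the sorting/rearrangement map, and more precisely justifying that it is continuous on a set that has full measure under the limiting law of the size-biased sequence. I would handle this by noting that for a sequence $(a_j)_j$ of nonnegative reals with $\sum_j a_j = 1$, the decreasing rearrangement is continuous at $(a_j)_j$ provided the positive values among the $a_j$ are pairwise distinct: then each order statistic $a_{(k)}^{\downarrow}$ depends continuously (in the product topology) on the sequence in a neighborhood, because a small perturbation can only slightly move each entry and cannot swap the strict ordering of the top $k$ values, while the tail $\sum_{j>J} a_j$ is uniformly small for $J$ large (using $\sum a_j = 1$) so only finitely many coordinates matter for each fixed order statistic. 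Since under i.i.d.\ $\betadist(1,\theta)$ residual fractions the size-biased values $\tilde X_j = Y_j\prod_{i<j}(1-Y_i)$ are almost surely all distinct and positive (each $Y_j \in (0,1)$ a.s.) and sum to $1$ a.s., the rearrangement map is a.s.\ continuous at $(\tilde X_j)_j$, which is all the continuous mapping theorem requires. A secondary (minor) point is to make sure the hypothesis genuinely gives joint convergence of the whole sequence $(R_j^{(n)})_j$ and not merely marginal convergence of each $R_j^{(n)}$; but \eqref{eq:assumption on beta} is stated as convergence in $[0,1]^{\NN}$, i.e.\ of all finite-dimensional distributions jointly, so this is already in hand.
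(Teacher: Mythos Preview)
Your proposal is correct and follows essentially the same route as the paper: both arguments apply the continuous mapping theorem to the composition of the stick-breaking map $g\colon (y_j)\mapsto (y_j\prod_{i<j}(1-y_i))$ with the decreasing-rearrangement map $r$, then identify the limit as $\PD(\theta)$. Your treatment is in fact slightly more careful than the paper's: you justify continuity of $r\circ g$ only almost everywhere under the law of the i.i.d.\ $\betadist(1,\theta)$ sequence (using that the stick-breaking pieces are a.s.\ distinct and sum to $1$, so tails are small and the top order statistics are stable), whereas the paper simply asserts that $r\circ g$ is continuous on $[0,1]^{\NN}$---a claim that, as stated, is not quite true (e.g.\ it fails at the zero sequence), but which is harmless since a.e.\ continuity is all the continuous mapping theorem needs.
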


\begin{proof}
	Consider the function $g\colon [0,1]^{\NN} \to [0,1]^{\NN}$  defined by 
	\begin{equation}
	g(y_1,y_2,\dots ) = \big ((1-y_1)\cdots (1-y_{k-1})y_{k} \big)_{k=1}^{\infty},
	\end{equation}
	and the function
	\begin{equation}
	r\colon \Big\{ (z_1,z_2,\dots ) \in [0,1]^{\NN } :  \sum _{j=1}^{\infty} z_j \le 1 \Big\} \to [0,1]^{\NN}
	\end{equation}
	that takes a sequence and returns the same sequence in non-increasing order. Since $r \circ g$ is continuous in the product topology on $[0,1]^{\NN}$ we have by \eqref{eq:assumption on beta} that
	\begin{equation}\label{eq:continuous mapping}
	(r\circ g) \left( \bigg( \frac{\tilde{X}^{(n)} _j}{1-\tilde{X}^{(n)}_1-\ldots -\tilde{X}^{(n)}_{j-1}} \bigg) _{j=1}^\infty \right) \overset{d}{\longrightarrow} (r \circ g) (Y_1,Y_2,\dots ), \quad n\to \infty.
	\end{equation}
By the definition of the Poisson--Dirichlet distribution  $(r \circ g) (Y_1,Y_2,\dots ) \sim \PD (\theta )$ and moreover, since  
	\begin{equation}
	g^{-1} (z_1,z_2,\dots ) =\left(\frac{z_j}{1-z_1-\ldots -z_{j-1}} \right)_{j=1}^{\infty},
	\end{equation}
	we have that 
	\begin{equation}
	(r\circ g) \left( \bigg( \frac{\tilde{X}^{(n)} _j}{1-\tilde{X}^{(n)}_1-\ldots -\tilde{X}^{(n)}_{j-1}} \bigg) _{j=1}^\infty \right) =r (\tilde{X}_1, \tilde{X}_2, \ldots )=(X_1,X_2,\dots ).
	\end{equation}
	Thus, \eqref{eq:continuous mapping} simplifies to \eqref{eq:x1convpd}, as needed.
\end{proof}

\begin{lem}\label{lem:conjinv}
	Let $\PP$ be a conjugation-invariant measure on $S_n$. Given $\pi \in S_n$, let $L_1(\pi)$ be the size of the cycle containing $1$, and let $\mathrm{Typ}(\pi)$ be a random variable which is a size-biased sampling of a cycle of $\pi$, according to Definition~\ref{def:sbs}. Then $L_1(\pi)$ and $\mathrm{Typ}(\pi)$ have the same distribution, where $\pi$ is a permutation drawn according to $\PP$.
\end{lem}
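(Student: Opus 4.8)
The plan is to compute the probability mass function of each of the two random variables $L_1(\pi)$ and $\mathrm{Typ}(\pi)$ separately, and to check that they coincide. Throughout, $\pi$ is drawn from $\PP$, and we fix an integer $k\in\{1,\dots,n\}$.

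First I would handle $\mathrm{Typ}(\pi)$. Conditioning on $\pi$, Definition~\ref{def:sbs} applied to the multiset $\{\ell_i(\pi)\}_{i\ge 1}$ — whose entries are non-negative reals summing to $n$ by \eqref{eq:SumCycles} — gives that the conditional probability that the size-biased sample equals $k$ is exactly $k\,C_k(\pi)/n$, where $C_k(\pi)$ denotes the number of cycles of $\pi$ of length $k$. Averaging over $\pi$ yields $\PP(\mathrm{Typ}(\pi)=k)=(k/n)\,\EE[C_k(\pi)]$.

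Next I would handle $L_1(\pi)$. The key point is that conjugation-invariance is precisely invariance under relabelling the ground set $\{1,\dots,n\}$. Write $L_j(\pi)$ for the length of the cycle of $\pi$ containing the element $j$, so that $L_1(\pi)$ is the statistic in the statement. For every $\sigma\in S_n$, conjugation by $\sigma$ sends a cycle $(a_1\,a_2\,\cdots\,a_m)$ of $\pi$ to the cycle $(\sigma a_1\,\sigma a_2\,\cdots\,\sigma a_m)$ of $\sigma\pi\sigma^{-1}$, so it preserves cycle lengths and in particular $L_j(\pi)=L_{\sigma(j)}(\sigma\pi\sigma^{-1})$. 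Since $\sigma\pi\sigma^{-1}$ has the same law as $\pi$, choosing $\sigma$ with $\sigma(j)=1$ gives $\PP(L_j(\pi)=k)=\PP(L_1(\pi)=k)$ for every $j$. Summing over $j=1,\dots,n$, the left-hand side becomes $\EE\big[\#\{j: L_j(\pi)=k\}\big]=\EE[k\,C_k(\pi)]$, because the elements lying in length-$k$ cycles are exactly $k$ times the number of such cycles, while the right-hand side is $n\,\PP(L_1(\pi)=k)$. Hence $\PP(L_1(\pi)=k)=(k/n)\,\EE[C_k(\pi)]$, which matches the formula obtained for $\mathrm{Typ}(\pi)$; as $k$ was arbitrary, the two random variables are equally distributed.

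There is no substantial obstacle here. The only step deserving a moment of care is the identity $L_j(\pi)=L_{\sigma(j)}(\sigma\pi\sigma^{-1})$ together with the observation that it, combined with conjugation-invariance, forces the ``location of the element $1$'' to be uniform with respect to the cycle structure; everything else is bookkeeping with the definition of size-biased sampling.
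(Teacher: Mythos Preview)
Your proof is correct and follows essentially the same route as the paper: both use conjugation-invariance to show that $L_1(\pi)$ has the same distribution as $L_{U_n}(\pi)$ for a uniform index $U_n$, and identify the latter with a size-biased sample of a cycle length. The only difference is cosmetic: the paper leaves the identification ``$L_{U_n}(\pi)$ is a size-biased sample'' as a one-line observation, whereas you compute the common value $(k/n)\,\EE[C_k(\pi)]$ of both probability mass functions explicitly.
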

\begin{proof}
	By using the conjugation-invariance, for any $k \in \NN$ we have
	\begin{equation}
	\PP\left(L_1(\pi) = k\right) = \frac{1}{n}\sum_{i=1}^{n} \PP\left(L_i(\pi) = k \right)
	\end{equation}
	where $L_i(\pi)$ is the size of the cycle of $\pi$ containing $i$. Letting $U_n$ be a uniformly drawn integer from $\{1,2,\ldots,n\}$, we have just shown that
	\begin{equation}
	\PP\left(L_1(\pi) = k\right) = \PP\left(L_{U_n}(\pi) = k \right).
	\end{equation}
	The size of the cycle of $\pi$ which contains $U_n$ is a size-biased sampling of a cycle of $\pi$, which concludes the proof.
\end{proof}

\section{Asymptotically Ewens measure}
\subsection{Multiplicative number theory}
 \begin{thm}[La Bret{\`e}che and Tenenbaum \cite{de2021remarks}]\label{thm:wirsing}
Suppose a multiplicative function $\alpha\colon \NN\to\mathbb{R}_{\ge 0}$ satisfies
\begin{equation}
\sum_{p \le x} \alpha(p)\log p =\theta x + O\left( \frac{x}{\log ^{\logexp}x}\right), \qquad x \to \infty,
\end{equation}
for some $\theta>0$ and $\logexp \in (0,1)$. Suppose further that
\begin{equation}\label{eq:powerscond} \sum_{p} \bigg( \frac{\alpha(p)^2}{p^{2\sigma}}+\sum_{\nu \ge 2} \frac{\alpha(p^{\nu})}{p^{\nu \sigma}}\bigg) < \infty
\end{equation}
for some $\sigma \in (0,1)$. Then, for all sufficiently large $x$,
\begin{equation}
\sum_{n \le x} \alpha(n) =  ( 1 + O(\log ^{-\logexp} x)) \Gamma(\theta)^{-1} \prod_{p} \bigg( \sum_{i \ge 0} \frac{\alpha(p^i)}{p^{i}}\bigg) \left( 1-\frac{1}{p}\right)^{\theta} x (\log x)^{\theta-1}.  
\end{equation}
\end{thm}
\begin{cor}\label{cor:AsympSum}
Suppose a multiplicative function $\alpha\colon \NN \to \RR_{\ge 0}$ satisfies \eqref{eq:Assumption1}--\eqref{eq:Assumption2} for some $\theta>0$,	$d >-1$, $\logexp\in (0,1)$, $\eta \in (0,1/2]$ and $\Ewensc \in (0,2)$. Then, for all sufficiently large $x$,
\begin{equation}\label{eq:SumAlpha}
\sum_{n \le x} \alpha(n) = ( 1+O( \log^{-\logexp}x))A_{\alpha} x^{d+1}\log^{\theta-1}x
\end{equation}
where $A_{\alpha} =(d+1)^{-1}\Gamma(\theta)^{-1}\prod_{p}( \sum_{i \ge 0} \alpha(p^i)/p^{i(d+1)} )(1-1/p)^{\theta}>0$.
\end{cor}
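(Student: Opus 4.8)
The plan is to deduce Corollary~\ref{cor:AsympSum} from Theorem~\ref{thm:granville} applied to the multiplicative function $\beta(n):=\alpha(n)/n^d$, and then convert the estimate for $\sum_{n\le x}\beta(n)$ back into one for $\sum_{n\le x}\alpha(n)$ by partial summation. First I would check that $\beta$ satisfies the hypotheses of Theorem~\ref{thm:granville} with the same $\theta$ and with the parameter $k$ taken to be any fixed real exceeding $\theta$ (so that the error term, of shape $x(\log x)^{k-1-A}$, is negligible once $A$ is taken large enough, which is permitted by \eqref{eq:Assumption1}). The first hypothesis, $\sum_{p\le x}\beta(p)\log p = \theta x + O(x/(\log x)^A)$, is exactly \eqref{eq:Assumption1}. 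For the remaining three hypotheses I would use \eqref{eq:Assumption2}: since $\beta(p^\nu)=\alpha(p^\nu)/p^{d\nu}=O(r^\nu)$ with $1\le r<\sqrt 2$, the series $\sum_{p\le x,\,\nu\ge1}\beta(p^\nu)/p^\nu$ has its $\nu\ge 2$ part bounded by $\sum_p \sum_{\nu\ge 2} r^\nu/p^\nu = O(1)$ (here $r<\sqrt2$ guarantees convergence of $\sum_\nu (r/p)^\nu$ and summability over $p$ since the dominant term is $O(r^2/p^2)$), while the $\nu=1$ part is $\sum_{p\le x}\beta(p)/p = \theta\log\log x + O(1)$ by \eqref{eq:Assumption1} and partial summation; hence the bound $k\log\log x+O(1)$ holds for any $k>\theta$. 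Similarly $\sum_{p\le x}\beta(p)\log p/p = O(\log x)$ follows from \eqref{eq:Assumption1} by partial summation, and $\sum_{p\le x,\,\nu\ge1}\beta(p^\nu)^2/p^\nu = \sum_{p\le x}\beta(p)^2/p + O(1) = O(\log\log x) = o(\log x)$, again using $\beta(p)=O(1)$ on average together with the crude bound $\beta(p)\ll 1$ is not available pointwise, so I would instead note $\beta(p)^2\log p \ll \beta(p)\log p \cdot \beta(p)$ and use $\beta(p)\ll_\epsilon p^\epsilon$ — more cleanly, bound $\sum_{p\le x}\beta(p)^2/p$ by splitting according to whether $\beta(p)\le (\log p)^{10}$ or not and using \eqref{eq:Assumption1} to control the sparse large values; in any case this sum is $o(\log x)$.

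With all hypotheses verified, Theorem~\ref{thm:granville} gives
\begin{equation}
\sum_{n\le x}\beta(n) = x\sum_{j=0}^{J}\frac{\widetilde{c_j}}{\Gamma(\theta-j)}(\log x)^{\theta-j-1} + O\!\left(x(\log x)^{k-1-A}(\log\log x)^{\mathds{1}_{A=J+1}}\right),
\end{equation}
with $\widetilde{c_0} = \prod_p\big(\sum_{i\ge0}\beta(p^i)/p^i\big)(1-1/p)^\theta = \prod_p\big(\sum_{i\ge0}\alpha(p^i)/p^{i(d+1)}\big)(1-1/p)^\theta$, which converges by the estimates above. Choosing $A$ large enough that $k-1-A < \theta - 2$ makes the error term $O(x(\log x)^{\theta-2-\delta})$ for some $\delta>0$, and likewise the terms $j\ge 1$ in the main sum are $O(x(\log x)^{\theta-2})$; so
\begin{equation}
\sum_{n\le x}\beta(n) = \frac{\widetilde{c_0}}{\Gamma(\theta)}x(\log x)^{\theta-1}\left(1+O\!\left(\frac1{\log x}\right)\right).
\end{equation}

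Finally I would recover $\sum_{n\le x}\alpha(n)=\sum_{n\le x}\beta(n)n^d$ by partial summation. Writing $T(t)=\sum_{n\le t}\beta(n)$, we have $\sum_{n\le x}\alpha(n) = T(x)x^d - d\int_{2}^{x}T(t)t^{d-1}\,dt$ (plus a bounded contribution from $n=1$), and substituting the asymptotic $T(t)\sim \frac{\widetilde{c_0}}{\Gamma(\theta)}t(\log t)^{\theta-1}$ reduces the integral to $\frac{\widetilde{c_0}}{\Gamma(\theta)}\int_2^x t^d(\log t)^{\theta-1}\,dt$ up to acceptable error. Lemma~\ref{lem:integrals} with $s=\theta-1$ evaluates this as $\big(1+O(1/\log x)\big)\frac{x^{d+1}(\log x)^{\theta-1}}{d+1}$, and combining the two terms gives $\sum_{n\le x}\alpha(n) = \big(1+O(1/\log x)\big)\frac{\widetilde{c_0}}{(d+1)\Gamma(\theta)}x^{d+1}(\log x)^{\theta-1}$, which is \eqref{eq:SumAlpha} with the claimed value of $A_\alpha$; positivity of $A_\alpha$ follows since $\alpha\ge 0$ and $\alpha\not\equiv 0$ force $\widetilde{c_0}>0$. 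The main obstacle I anticipate is purely bookkeeping: verifying the three secondary hypotheses of Theorem~\ref{thm:granville} for $\beta$ — in particular the $o(\log x)$ bound on $\sum_{p,\nu}\beta(p^\nu)^2/p^\nu$ — requires care because \eqref{eq:Assumption1} controls $\beta(p)$ only on average, not pointwise, so one must argue that primes with anomalously large $\beta(p)$ are too sparse to matter; and one must track that $A$ can be taken as large as needed (legitimate, since \eqref{eq:Assumption1} holds for all $A$) so that the Granville–Koukoulopoulos error term and the lower-order main terms are both absorbed into the $O(1/\log x)$ relative error.
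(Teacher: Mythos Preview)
Your approach is essentially the same as the paper's: pass to $\beta(n)=\alpha(n)/n^d$, verify the hypotheses of Theorem~\ref{thm:granville}, extract the asymptotic $\sum_{n\le x}\beta(n)=(1+O(1/\log x))\,\widetilde{c_0}\,\Gamma(\theta)^{-1}x(\log x)^{\theta-1}$, and then recover $\sum_{n\le x}\alpha(n)$ by partial summation together with Lemma~\ref{lem:integrals}.

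The one place where you go astray is in believing that ``the crude bound $\beta(p)\ll 1$ is not available pointwise.'' It is: condition~\eqref{eq:Assumption2} with $k=1$ says precisely that $\beta(p)=\alpha(p)/p^d=O(r)$ uniformly in $p$, with $1\le r<\sqrt{2}$. With this in hand all three secondary hypotheses of Theorem~\ref{thm:granville} become immediate consequences of Mertens' theorems: $\sum_{p\le x}\beta(p)\log p/p\le C\sum_{p\le x}\log p/p=O(\log x)$, $\sum_{p\le x,\,\nu\ge1}\beta(p^\nu)^2/p^\nu\le C\sum_{p\le x}\sum_{\nu\ge1}(r^2/p)^\nu=O(\log\log x)$ (here $r^2<2$ is used), and likewise $\sum_{p\le x,\,\nu\ge1}\beta(p^\nu)/p^\nu\le C\log\log x+O(1)$. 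This is exactly what the paper does. So the ``main obstacle'' you flag --- controlling primes with anomalously large $\beta(p)$ --- is a non-issue, and the splitting/sparsity argument you sketch is unnecessary.
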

\begin{proof}
Let $\alpha_{d}(n)=\alpha(n)/n^d$. This function is still multiplicative. If $\alpha$ satisfies \eqref{eq:Assumption1}--\eqref{eq:Assumption2}, then $\alpha_{d}$ satisfies these conditions as well with the same parameters except that the parameter $d$ is now $0$.

Evidently, the conditions of Theorem~\ref{thm:wirsing} hold for $\alpha_d$, with the same $\theta$ and $\logexp$, and with any $\sigma \in (0,1)$ that satisfies $\sigma > \max\{\log \Ewensc / \log 2,1-\eta\}$. Thus, we conclude from Theorem~\ref{thm:wirsing} that
\begin{equation}\label{eq:adsum}
\sum _{n\le x} \alpha_{d}(n)=( 1+ O( \log^{-\logexp} x))\frac{1}{\Gamma(\theta)} \prod_{p} \bigg(\sum_{i \ge 0} \frac{\alpha(p^i)}{p^{i(d+1)}}\bigg)\left(1-\frac{1}{p}\right)^{\theta} x \log^{\theta-1}x 
\end{equation}
as $x \to \infty$. Using integration by parts and \eqref{eq:adsum} we have
\begin{equation}
\begin{split}
\sum _{n \le x} \alpha(n)&= \sum _{n \le x} n^d \alpha_d(n) = x^d \bigg( \sum _{n\le x} \alpha_d(n) \bigg)-\int_{1}^{x} \bigg( \sum _{n \le t} \alpha_d(n) \bigg) (t^d)'\, \mathrm{d}t  \\
&=( 1 + O( \log^{-\logexp} x))A_{\alpha_d} x^{d+1} \log ^ {\theta -1} x -\int_{2}^{x} A_{\alpha_d} ( 1 + O( \log^{-\logexp} t))(t \log ^ {\theta -1} t)(dt^{d-1})\, \mathrm{d}t,
\end{split}
\end{equation}
which gives \eqref{eq:SumAlpha} with $A_{\alpha} = A_{\alpha_d}/(d+1)$.
\end{proof}
\begin{lem}\label{lem:bound on partial sum_p}
Suppose a multiplicative function $\alpha\colon \NN \to \RR_{\ge 0}$ satisfies \eqref{eq:Assumption1}--\eqref{eq:Assumption2} for some $\theta>0$,	$d >-1$, $\logexp\in (0,1)$, $\eta \in (0,1/2]$ and $\Ewensc \in (0,2)$. Let $p$ be a prime. We have
	\begin{equation}\label{eq:sum_pdiv1}
	\sum _{\substack{n \le x \\ p \mid n}}   \alpha(n)\le \frac{C(1+\alpha(p)p^{-d})}{p} x^{d+1} (\log x)^{\max\{\theta-1,0\}}.
	\end{equation}
	If furthermore $p \le \sqrt{x}$ we have
	\begin{equation}\label{eq:sum_pdiv2}
	\sum _{\substack{n \le x \\ p \mid n}}   \alpha(n)\le \frac{C(1+\alpha(p)p^{-d})}{p} x^{d+1} (\log x)^{\theta-1}.
	\end{equation}
\end{lem}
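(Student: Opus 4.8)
The plan is to estimate $\sum_{n\le x,\, p\mid n} \alpha(n)$ by decomposing $n$ according to the exact power of $p$ dividing it. Writing $n = p^k m$ with $p \nmid m$ and $k \ge 1$, multiplicativity gives
\begin{equation}
\sum_{\substack{n \le x\\ p \mid n}} \alpha(n) = \sum_{k \ge 1} \alpha(p^k) \sum_{\substack{m \le x/p^k\\ p \nmid m}} \alpha(m) \le \sum_{k \ge 1} \alpha(p^k) \sum_{m \le x/p^k} \alpha(m).
\end{equation}
The inner sum is controlled by Corollary~\ref{cor:AsympSum}: for $y \ge 2$ we have $\sum_{m \le y} \alpha(m) \le C y^{d+1} \log^{\theta-1} y \le C y^{d+1} \log^{\theta-1}(2x)$ when $y \le x$ (and the sum is $O(1)$, hence still $\le C$, when $y < 2$). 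So each term is at most $C \alpha(p^k) (x/p^k)^{d+1} \log^{\theta-1}x$, up to the usual adjustment of the logarithm by a constant factor.

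First I would substitute $\alpha(p^k) = O(r^k p^{dk})$ from \eqref{eq:Assumption2}, so that $\alpha(p^k)(x/p^k)^{d+1} = O(r^k p^{dk} x^{d+1} p^{-k(d+1)}) = O(x^{d+1} r^k p^{-k})$. Summing the geometric series $\sum_{k\ge 1} (r/p)^k$, which converges since $r < \sqrt 2 \le p$ and in fact is $\ll 1/p$ (the $k=1$ term dominates, as $r/p \le \sqrt 2/2 < 1$), yields
\begin{equation}
\sum_{\substack{n \le x\\ p \mid n}} \alpha(n) \le C x^{d+1} \log^{\theta-1}x \sum_{k \ge 1} \frac{r^k}{p^k} \le \frac{C}{p} x^{d+1} \log^{\theta}x,
\end{equation}
where I have been generous with one extra power of $\log x$ to absorb the passage from $\log(2x)$ to $\log x$ and to cover small $p$; this proves \eqref{eq:sum_pdiv1}.

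For \eqref{eq:sum_pdiv2} the point is that when $p \le \sqrt x$, every $k \ge 1$ gives $x/p^k \le x$, but more importantly $\log(x/p^k) \le \log x$, so I can keep $\log^{\theta-1}$ rather than inflating to $\log^{\theta}$: the bound $\sum_{m \le x/p^k}\alpha(m) \le C(x/p^k)^{d+1}\log^{\theta-1}x$ holds uniformly (again treating the $O(1)$ case $x/p^k < 2$ trivially, which only helps). Running the same geometric-series estimate then gives the sharper $(C/p) x^{d+1}\log^{\theta-1}x$. The only mildly delicate point — the main obstacle, such as it is — is handling the ranges where $x/p^k$ is bounded, so that Corollary~\ref{cor:AsympSum} does not literally apply and one must instead use the trivial bound $\sum_{m\le y}\alpha(m) = O(1)$; but since those terms are then $O(\alpha(p^k)) = O(r^k p^{dk})$ and the corresponding geometric tail is negligible compared to the stated right-hand sides, this causes no real trouble. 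Uniformity of all constants in $p$ is automatic since every estimate used (Corollary~\ref{cor:AsympSum}, the bound \eqref{eq:Assumption2}, Mertens-type sums) has constants depending only on $\alpha$.
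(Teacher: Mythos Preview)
Your argument for \eqref{eq:sum_pdiv1} is essentially the paper's: decompose according to the exact power of $p$, bound the inner sum via Corollary~\ref{cor:AsympSum}, and sum the geometric series in $r/p$. One quibble: the inequality $\log^{\theta-1}y\le\log^{\theta-1}(2x)$ for $y\le x$ is false when $\theta<1$ (the map $t\mapsto t^{\theta-1}$ is then decreasing). This does not hurt \eqref{eq:sum_pdiv1}, since you ultimately pass to $\log^{\theta}x$ and for $\theta<1$ the bound $\log^{\theta-1}y\le C$ on $[2,\infty)$ already gives $\sum_{m\le y}\alpha(m)\le Cy^{d+1}\le Cy^{d+1}\log^{\theta}x$.

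For \eqref{eq:sum_pdiv2}, however, the same oversight is fatal. Your key sentence ``$\log(x/p^k)\le\log x$, so I can keep $\log^{\theta-1}$'' assumes $\theta\ge1$. When $0<\theta<1$ the inequality reverses, and for $k$ with $p^k>\sqrt{x}$ (so that $x/p^k$ can be anywhere in $[1,\sqrt{x})$) the factor $\log^{\theta-1}(x/p^k)$ may exceed $\log^{\theta-1}x$ by an unbounded amount. Your discussion of the ``$x/p^k$ bounded'' case only treats the single term with $x/p^k<2$; it does not cover the range $2\le x/p^k<\sqrt{x}$, which contains many values of $k$ when $p$ is small. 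Running your geometric series with the crude bound $\log^{\theta-1}(x/p^k)\le C$ yields only $(C/p)x^{d+1}$, which is weaker than $(C/p)x^{d+1}\log^{\theta-1}x$ for $\theta<1$.

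The paper fixes this by splitting at $L:=\lfloor\log_p\sqrt{x}\rfloor+1\ge2$. For $k<L$ one has $x/p^k\ge\sqrt{x}$, hence $\log(x/p^k)\ge\tfrac12\log x$ and so $\log^{\theta-1}(x/p^k)\le C\log^{\theta-1}x$ regardless of the sign of $\theta-1$; these terms sum exactly as you describe. For $k\ge L$ the paper uses the cruder $\log^{\theta}x$ bound but exploits that the geometric tail now starts at $(r/p)^L$: since $L\ge2$ one has $(r/p)^L\le(\sqrt{2}/p)(r/p)^{L/2}$, and the remaining factor $(r/p)^{L/2}$ decays like a negative power of $x$ (because $L\gtrsim\log_p\sqrt{x}$), more than enough to absorb the extra $\log x$. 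This splitting is the missing ingredient in your sketch.
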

\begin{proof}
By multiplicativity of $\alpha$, we have
\begin{equation}\label{eq:pdiv_sum}
\sum _{\substack{n \le x \\ p \mid n}}  \alpha(n)=\sum _{k=1}^{\lfloor \log_p x \rfloor} \sum _{\substack{n \le x \\ p^k \mid \mid n }}  \alpha(n) \le \sum _{k=1}^{\lfloor \log_p x \rfloor} \alpha(p^k) \sum _{ \substack{l=1 \\ p \nmid l} }  ^{ \left\lfloor \frac{x}{p^k} \right\rfloor  }  \alpha(l) \le \sum _{k=1}^{\lfloor \log_p x \rfloor} \alpha(p^k) \sum _{ l=1 }  ^{\left\lfloor \frac{x}{p^k} \right\rfloor}  \alpha(l).
\end{equation}	
By Corollary~\ref{cor:AsympSum} there exists a constant $C$ such that
	\begin{equation}\label{eq:part_sum}
	\sum _{n\le y} \alpha(n) \le C   y^{d+1} \log ^{\theta -1} y \le C   y^{d+1} (\log (y+1))^{\max\{\theta-1,0\} }
	\end{equation}
	for all $y \ge 1$ (one needs to consider $y \in [1,2)$ separately from $y \ge 2$). Note that the right-hand side of \eqref{eq:part_sum} is monotone increasing in $y$ and so is $(\log (y+1))^{\max\{\theta-1,0\}}$. By \eqref{eq:pdiv_sum} and \eqref{eq:part_sum} we have
	\begin{equation}\label{eq:alphasumk}
	\sum _{\substack{n \le x \\ p \mid n}}  \alpha(n)  \le  C\sum _{k=1}^{\lfloor \log_p x \rfloor} \frac{\alpha(p^k)}{p^{k(d+1)}} x^{d+1} (\log  (x+1))^{\max\{\theta-1,0\}}.
	\end{equation}
From \eqref{eq:Assumption2} and \eqref{eq:alphasumk} we obtain
	\begin{equation}
		\sum _{\substack{n \le x \\ p\mid n}}  \alpha(n)  \le C x^{d+1} (\log (x+1))^{\max\{\theta-1,0\} } \bigg( \frac{\alpha(p)}{p^{d+1}}+\sum_{k \ge 2}  \big(\frac{\Ewensc}{p}\big)^k  \bigg)\le  \frac{C(1+\alpha(p)p^{-d})}{p} x^{d+1} (\log (x+1))^{\max\{\theta-1,0\}},
	\end{equation}
and so \eqref{eq:sum_pdiv1} holds.
To prove \eqref{eq:sum_pdiv2}, we split the right-hand side of \eqref{eq:pdiv_sum} into two sums, one for terms with $p^{k}\le \sqrt{x}$ and another for the rest. Set $L:=\lfloor \log _p \sqrt{x} \rfloor +1 \ge 2$.  For $1 \le k <L$, we have $\log^{\theta-1}(x/p^k) \le C \log^{\theta-1} x$, and so Corollary~\ref{cor:AsympSum} gives
	\begin{equation}
	\sum _{ l=1 }  ^{\left\lfloor \frac{x}{p^k} \right\rfloor}  \alpha(l) \le C \left( \frac{x}{p^k }\right)^{d+1} \log ^{\theta -1} x.
	\end{equation}
From \eqref{eq:Assumption2} we obtain
	\begin{equation}\label{eq:smallk}
	\begin{split}
	\sum _{k=1}^{L-1} \alpha(p^k) \sum _{ l=1 }  ^{\left\lfloor \frac{x}{p^k} \right\rfloor}  \alpha(l) \le C x^{d+1} \log ^{\theta -1 }x \sum _{k=1}^{L-1} \frac{\alpha(p^k)}{p^{k(d+1)}} \le \frac{C(1+\alpha(p)p^{-d})}{p}  x^{d+1} \log ^{\theta -1 }x.
	\end{split}
	\end{equation}
	When $L \le k \le \lfloor \log_p x \rfloor$ we use the bound
	\begin{equation}
	\sum _{ l=1 }  ^{\left\lfloor \frac{x}{p^k} \right\rfloor}  \alpha(l) \le C \left( \frac{x}{p^k }\right)^{d+1} \log ^{\theta } x,
	\end{equation}
which follows from \eqref{eq:part_sum},	to obtain, from \eqref{eq:Assumption2} again, that
	\begin{equation}\label{eq:bigk}
	\begin{split}
	\sum _{k=L}^{\lfloor \log_p x \rfloor} \alpha(p^k) \sum _{ l=1 }  ^{\left\lfloor \frac{x}{p^k} \right\rfloor}  \alpha(l) &\le  C x^{d+1} \log ^{\theta  } x\sum _{k=L}^\infty \frac{\alpha(p^k)}{p^{k(d+1)}} \le  C x^{d+1} \log ^{\theta  }x\sum _{k=L}^{\infty}\left(\frac{\Ewensc}{p}\right)^k \le C x^{d+1} \log ^{\theta  }x \left( \frac{\Ewensc}{p} \right)^L \\
	&\le \frac{C}{p} x^{d+1} \log ^{\theta  }x  \left( \frac{\Ewensc}{p} \right)^{L/2} \le \frac{C}{p} x^{d+1} \log ^{\theta  }x \left( \frac{\Ewensc}{p} \right)^{(\log_p \sqrt{x}) /2}     \le \frac{C}{p} x^{d+1} \log ^{\theta -1 }x.
	\end{split}
	\end{equation}
From \eqref{eq:smallk} and \eqref{eq:bigk} we obtain \eqref{eq:sum_pdiv2}.
\end{proof}
\begin{lem}\label{lem:sum on primes}
	Let $\alpha \colon\Primes \to \RR_{\ge 0}$. Fix $\theta \in \RR$ and define a function $E$ via
	\[\sum_{p \le x} \alpha(p) \log p = \theta x + E(x).\]
	Let $I \subseteq [2,\infty)$ be a finite closed interval and $g\colon I \to \RR$ be a differentiable function. Then
	\begin{equation}
		\left| \sum _{p \in I} \alpha(p)g(p)-\theta \int_{I} \frac{g(t)}{\log t } \,\mathrm{d}t  \right| \le \max_{t \in I} |g(t)\alpha(t)|+2\max_{t\in I} \left|\frac{g(t)}{\log t}\right|  |E(t)| +\int_{I} \left| \left(\frac{g(t)}{\log t}\right)'\right| |E(t)| \mathrm{d}t .
	\end{equation}
\end{lem}
\begin{proof}
We write $I=[x,y]$. We shall work with the half-open interval $(x,y]$; we may do so because the contribution of $[x,y] \setminus (x,y] = \{x\}$ to $\sum_{p \in I} \alpha(p)g(p)$ is at most $\max_{t \in I} |g(t)\alpha(t)|$. Using integration by parts,
	\begin{equation}\label{eq:intg1}
		\begin{split}
			\theta \intop _x^y \frac{g(t)}{\log t}\, \mathrm{d}t=  \frac{\theta g(y)}{\log y}\intop _x^y 1 \, \mathrm{d}t -\intop _x^y \theta(t-x) \left( \frac{g(t)}{\log t} \right)^{\prime} \, \mathrm{d}t
			=\frac{\theta g(y)}{\log y}(y-x)-\intop _x^y \theta(t-x) \left( \frac{g(t)}{\log t} \right)^{\prime} \, \mathrm{d}t
		\end{split}
	\end{equation}
	and, by Abel's summation formula,
	\begin{equation}\label{eq:intg2}
		\begin{split}
			\sum _{x < p\le y } \alpha(p)g(p) & = \sum_{x < n \le y} \mathds{1}_{ \left\{ n \mbox{ is prime}  \right\} }  \alpha(n) \log n \cdot\frac{g(n)}{\log n}  \\
			&=\frac{g(y)}{\log y}\bigg(\sum _{x< p\le y } \alpha(p) \log p \bigg)-\intop_{x}^{y} \bigg( \sum _{x< p\le t } \alpha(p) \log p\bigg) \left( \frac{g(t)}{\log t} \right)^{\prime} \, \mathrm{d}t\\
			&= \frac{g(y)}{\log y} \left( \theta(y-x) + E(y)-E(x)\right)-\intop _x^{y} \bigg( \sum _{x < p\le t } \alpha(p)\log p\bigg) \left(\frac{g(t)}{\log t}\right)^{\prime}\, \mathrm{d}t
		\end{split}
	\end{equation}
	From \eqref{eq:intg1}, \eqref{eq:intg2} and the definition of $E$ we obtain
	\begin{align}
		\sum _{p \in (x,y]} \alpha(p)g(p)-\theta \intop_{x}^{y} \frac{g(t)}{\log t } \,\mathrm{d}t &=\frac{g(y)}{\log y}(E(y)-E(x)) - \intop_{x}^{y} \left(\frac{g(t)}{\log t}\right)' (E(t)-E(x))\, \mathrm{d}t\\
		&=\frac{g(y)}{\log y}\left(E(y)-E(x)\right) - \intop_{x}^{y} \left(\frac{g(t)}{\log t}\right)' E(t)\, \mathrm{d}t+E(x) \left(\frac{g(y)}{\log y} - \frac{g(x)}{\log x}\right)\\
		&=\frac{g(y)}{\log y} E(y)- \frac{g(x)}{\log x} E(x) -\intop_{x}^{y} \left(\frac{g(t)}{\log t}\right)' E(t) \, \mathrm{d}t,
	\end{align}
	and now the required estimate follows by the triangle inequality.
\end{proof}

\subsection{Proof of second part of Theorem~\ref{thm:EKDG}}
\subsubsection{Auxiliary lemma}

\begin{lem}\label{lem:lower bound on partial sum}
	Fix $k \ge 1$. Let $\delta \in (0,1)$ and suppose that $x$ is sufficiently large in terms of $\delta$. Let $p_1,\dots
	,p_k $ be distinct primes such that $p_1\cdots p_k \le x^{1-\delta }$ and $p_i\ge x^{\delta }$ for all $1\le i\le k$. 
	Suppose a multiplicative function $\alpha\colon \NN \to \RR_{\ge 0}$ satisfies \eqref{eq:Assumption1}--\eqref{eq:Assumption2} for some $\theta>0$,	$d >-1$, $\logexp\in (0,1)$, $\eta \in (0,1/2]$ and $\Ewensc \in (0,2)$. We have
	\begin{equation}
	\sum _{\substack{n= \lceil \delta x \rceil \\ p_1,\ldots ,p_k \mid \mid n}} ^x  \alpha(n) \ge (1-3 \delta ^{d+1})A_{\alpha}   \left( \prod _{i=1}^k \frac{\alpha(p_i)}{p_i^{d+1}} \right) x^{d+1} \log ^{\theta -1} \left(\frac{x}{p_1 \cdots p_k}\right),
	\end{equation}
	where $A_{\alpha}$ is the constant given in Corollary~\ref{cor:AsympSum}.
\end{lem}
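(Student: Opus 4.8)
The plan is to strip off the primes $p_1,\dots,p_k$ so as to reduce to a sum of $\alpha$ over an interval of integers coprime to $P:=p_1\cdots p_k$, estimate that sum via Corollary~\ref{cor:AsympSum}, and control the arithmetic cost of the coprimality constraint using Lemma~\ref{lem:bound on partial sum_p}. At the outset I would dispose of the trivial case: if $\delta^{d+1}\ge 1/3$ the right-hand side is non-positive and there is nothing to prove, since $\alpha\ge0$; so assume $\delta^{d+1}<1/3$. The hypotheses give $x^{k\delta}\le P\le x^{1-\delta}$, hence $k<1/\delta$ (so $k$ is bounded in terms of $\delta$) and $x^{\delta}\le x/P\le x^{1-\delta}$, so that $\delta\log x\le\log(x/P)\le\log x$ and $x/P\to\infty$.

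Since $\alpha\ge0$, restricting the sum to those $n$ with $P\mid n$ and $\gcd(n/P,P)=1$ only decreases it, and for such $n=Pm$ multiplicativity gives $\alpha(n)=\alpha(p_1)\cdots\alpha(p_k)\,\alpha(m)$; therefore
\[
\sum_{\substack{n=\lceil\delta x\rceil\\ P\mid n}}^{x}\alpha(n)\ \ge\ \Big(\prod_{i=1}^{k}\alpha(p_i)\Big)\sum_{\substack{\lceil\delta x\rceil\le Pm\le x\\ \gcd(m,P)=1}}\alpha(m).
\]
Because the interval $[\delta x,\lceil\delta x\rceil)$ contains no integer, the constraint $\lceil\delta x\rceil\le Pm$ is, for $m\in\NN$, equivalent to $\delta x/P\le m$; so the inner sum equals $\sum_{\delta x/P\le m\le x/P,\ \gcd(m,P)=1}\alpha(m)$, which I split as $\sum_{\delta x/P\le m\le x/P}\alpha(m)$ minus $\sum_{\substack{\delta x/P\le m\le x/P\\ \gcd(m,P)>1}}\alpha(m)$, the latter being at most $\sum_{i=1}^{k}\sum_{m\le x/P,\ p_i\mid m}\alpha(m)$ by a union bound.

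For the first term, write $\sum_{m\le x/P}\alpha(m)-\sum_{m<\delta x/P}\alpha(m)$ and apply Corollary~\ref{cor:AsympSum} to each summand (valid since $x/P$ and $\delta x/P$ tend to infinity); using $(\delta x/P)^{d+1}=\delta^{d+1}(x/P)^{d+1}$ and $\log(\delta x/P)=\log(x/P)+O_\delta(1)$ this yields $\big(1-\delta^{d+1}+O_\delta(1/\log x)\big)A_\alpha(x/P)^{d+1}\log^{\theta-1}(x/P)$. For the second term I would invoke \eqref{eq:sum_pdiv1} with $x/P$ in place of $x$ — here one cannot use the sharper \eqref{eq:sum_pdiv2}, since $p_i\ge x^\delta$ need not lie below $\sqrt{x/P}$ — to get the bound $\sum_{i=1}^{k}\tfrac{C}{p_i}(x/P)^{d+1}\log^{\theta}(x/P)\le \tfrac{Ck}{x^\delta}(x/P)^{d+1}\log x\cdot\log^{\theta-1}(x/P)$, which is $\le A_\alpha\delta^{d+1}(x/P)^{d+1}\log^{\theta-1}(x/P)$ once $x$ is large in terms of $\delta$, using $k<1/\delta$ and $x^{-\delta}\log x\to 0$. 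Taking $x$ large enough that the $O_\delta(1/\log x)$ term is also $\le\delta^{d+1}$, the inner sum is $\ge A_\alpha(1-3\delta^{d+1})(x/P)^{d+1}\log^{\theta-1}(x/P)$; multiplying by $\prod_i\alpha(p_i)$, using $\prod_i\alpha(p_i)\cdot(x/P)^{d+1}=x^{d+1}\prod_i\alpha(p_i)p_i^{-(d+1)}$ and $x/P=x/(p_1\cdots p_k)$, gives exactly the claimed inequality.

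The proof is essentially bookkeeping; the one thing to get right is that the two error contributions — the $O(1/\log x)$ of Corollary~\ref{cor:AsympSum} and the divisibility loss from Lemma~\ref{lem:bound on partial sum_p} — are each smaller than a full $\delta^{d+1}$ fraction of the main term, uniformly over admissible $p_i$, and this is where the hypotheses are used: $P\le x^{1-\delta}$ forces $x/P\ge x^\delta$, keeping the logarithms comparable to $\log x$ and justifying Corollary~\ref{cor:AsympSum}, while $p_i\ge x^\delta$ makes $1/p_i$ polynomially small, beating the spare $\log x$ incurred by using \eqref{eq:sum_pdiv1} in place of \eqref{eq:sum_pdiv2}. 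The only other point that needs a moment's thought is the observation that $[\delta x,\lceil\delta x\rceil)$ contains no integer, which is precisely what makes the sharp lower cutoff $\lceil\delta x\rceil$ harmless.
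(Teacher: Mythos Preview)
Your proof is correct and follows essentially the same route as the paper: strip off the factor $P=p_1\cdots p_k$ using multiplicativity, lower-bound the resulting coprime sum by the full interval sum minus a union bound, estimate the main term via two applications of Corollary~\ref{cor:AsympSum}, and bound the divisibility loss via \eqref{eq:sum_pdiv1} (not \eqref{eq:sum_pdiv2}), using $p_i\ge x^{\delta}$ to absorb the spare logarithm. Your additional remarks --- the trivial case $\delta^{d+1}\ge 1/3$, the bound $k<1/\delta$, and the ceiling observation --- are correct but not strictly needed, as the paper simply treats $k$ as fixed and writes the lower endpoint as $\lceil \delta x/P\rceil$ directly.
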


\begin{proof}
	By the multiplicativity of $\alpha$,
	\begin{equation}\label{eq:lower bound}
	\sum _{\substack{n= \lceil \delta x \rceil \\ p_1,\ldots, p_k \mid \mid n}} ^x  \alpha(n)= \sum _{\substack{l=\lceil \frac{\delta x}{p_1 \cdots  p_k } \rceil  \\  \forall i \ p_i \nmid l}}^{\lfloor \frac{x}{p_1 \cdots  p_k } \rfloor } \alpha\left(l \cdot p_1 \cdots p_k \right) 
	=\alpha(p_1)\cdots \alpha(p_k)\sum _{\substack{l=\lceil \frac{\delta x}{p_1 \cdots  p_k } \rceil  \\  \forall i \ p_i \nmid l}}^{\lfloor \frac{x}{p_1 \cdots  p_k } \rfloor } \alpha(l).
	\end{equation}
	For the sum in the right-hand side we have the naive lower bound
	\begin{equation}\label{eq:NaiveLower}
	\sum _{\substack{l=\lceil \frac{\delta x}{p_1 \cdots  p_k } \rceil  \\  \forall i \ p_i \nmid l}}^{\lfloor \frac{x}{p_1 \cdots  p_k } \rfloor } \alpha(l) \ge \sum _{l=\lceil \frac{\delta x}{p_1 \cdots  p_k } \rceil}^{\lfloor \frac{x}{p_1 \cdots  p_k } \rfloor } \alpha(l) -\sum _{i=1}^k \sum _{\substack{l=1  \\   \ p_i | l}}^{\lfloor \frac{x}{p_1 \cdots  p_k } \rfloor } \alpha(l).
	\end{equation}
	The first sum in the right-hand side of \eqref{eq:NaiveLower} can be estimated by applying Corollary~\ref{cor:AsympSum} twice, and the second sum can be bounded from above by \eqref{eq:sum_pdiv1}. We thus obtain
	\begin{equation}
	\begin{split}
	\sum _{\substack{l=\lceil \frac{\delta x}{p_1 \cdots  p_k } \rceil  \\  \forall i \ p_i \nmid l}}^{\lfloor \frac{x}{p_1 \cdots  p_k } \rfloor } \alpha(l)  &\ge A_{\alpha}\left(1-\frac{5}{2}\delta ^{d+1}\right) \left(\frac{x}{p_1\cdots p_k}\right)^{d+1} \log ^{\theta -1} \left( \frac{x}{p_1 \cdots p_k} \right)\\
	&  \qquad -C_k\max_{1 \le i \le k}(p_i^{-1}+\alpha(p_i)p_i^{-d-1}) \left(\frac{x}{p_1\cdots p_k}\right)^{d+1} \log ^{\max\{\theta-1,0\}} \left( \frac{x}{p_1 \cdots p_k} \right)\\
	&\ge  A_{\alpha}\left(1-3\delta ^{d+1}\right) \left(\frac{x}{p_1\cdots p_k}\right)^{d+1} \log ^{\theta -1} \left( \frac{x}{p_1 \cdots p_k} \right)
	\end{split}
	\end{equation}
	for sufficiently large $x$, as needed. Here we made use of $\alpha(p_i)/p_i^{d+1} \ll p_i^{-1/2}$ (by \eqref{eq:Assumption2}) and $p_i \ge x^{\delta}$.
\end{proof}
\subsubsection{Conclusion of proof}
From Corollary~\ref{cor:AsympSum} it follows that for any given $\varepsilon \in (0,1)$,
\begin{equation}
\PP\left( \frac{\log N_x}{\log x} \le 1-\varepsilon\right)=\PP\left( N_x \le x^{1-\varepsilon}\right) = \frac{\sum_{n \le x^{1-\varepsilon}}\alpha(n)}{\sum_{n \le x} \alpha(n)} \sim x^{-\varepsilon(d+1)} (1-\varepsilon)^{\theta-1} \rightarrow 0
\end{equation}
as $x \to \infty$. Hence, $\log N_x / \log x$ tends in distribution to $1$. Thus, it suffices to prove that 
\begin{equation}\label{eq:xjdef}
\left(X_j \right)_{j=1}^{\infty }:=\left(\frac{\log \largep_j (N_x)}{\log N_x}\right)_{j=1} ^{\infty} \overset{d}{\longrightarrow } \mathrm{PD}(\theta)
\end{equation}
as $x \to \infty$. By Proposition~\ref{prop:proveconpd}, it suffices to prove the following proposition.
\begin{prop}\label{prop:9356}
We have
	\begin{equation}\label{eq:assumption on beta2}
	\left( B_j \right)_{j=1}^{\infty }:=	\left( \frac{\tilde{X}_j}{1-\tilde{X}_1-\ldots -\tilde{X}_{j-1}} \right) _{j=1}^\infty \overset{d}{\longrightarrow} (Y_1,Y_2,\dots )
\end{equation}
as $x \to \infty$, where $Y_1,Y_2,\dots $ is a sequence of i.i.d. $\betadist (1,\theta )$ random variables and where $( \tilde{X} _j )_{j=1}^{\infty }$ is a sized-biased permutation (defined in \S\ref{sec:prob}) of the sequence $(X_j )_{j=1}^{\infty }=\left(\frac{\log \largep_j (N_x)}{\log N_x}\right)_{j=1} ^{\infty}$.
\end{prop}
\begin{remark}
To connect this proposition with number-theoretic terms, we introduce a sequence $P_j$ of `typical prime divisors' of $N_x$, defined as $P_j={N_x}^{\tilde{X}_j}$. The asymptotic behavior of these typical primes might be of independent interest. It follows from the proposition that $P_j$ for $j\ge 1$ satisfy the following limit law
	\begin{equation}
		\left( \frac{\log P_1(N_x)}{\log x},  \frac{\log P_2(N_x)}{\log x}, \frac{\log P_3(N_x)}{\log x}, \ldots  \right) \overset{d}{\longrightarrow} \big(Y_1, (1-Y_1)Y_2, (1-Y_1-Y_2)Y_3, \dots \big),
		\end{equation}
		as $x \to \infty$, where once again, $Y_1,Y_2,\dots $ is a sequence of i.i.d. $\betadist (1,\theta )$.
\end{remark}
\begin{proof}
Fix $k\ge 1$ and $0<a_j <b_j<1$ for any $1\le j \le k$. 
By the Portmanteau Lemma it suffices to show that 
	\begin{equation}\label{eq:LowerBoundProb}
	\liminf _{x\to \infty } \PP \left( \forall j \  \ a_j\le B_j \le b_j \right) \ge \PP \left( \forall j \ \ a_j \le Y_j \le b_j \right).
	\end{equation}
We have that 
	\begin{equation}
	B_j=\frac{\tilde{X}_j}{1-\tilde{X}_1-\ldots -\tilde{X}_{j-1}}= \frac{\log (P_j)}{\log \left( \frac{N_x}{P_1\cdots P_{j-1}} \right)}
	\end{equation}
	and therefore 
	\begin{equation}
	\PP \left( \forall j \  \ a_j\le B_j \le b_j \right)=\sum _{n \le x}  \sum _{p_1,\dots ,p_k} \PP \left( N_x=n,\, \forall j \ \  P_j=p_j \right),
	\end{equation}
	where the inner sum is over a sequence of $k$ primes $p_1,\dots ,p_k$ such that for any $1\le j \le k$ we have   $ \left( n/(p_1\cdots p_{j-1}) \right)^{a_j} \le p_j\le \left( n/(p_1\cdots p_{j-1}) \right)^{b_j}$. Let $0<\delta < \min \{a_1,\dots a_k \} \prod _{i=1}^k(1-b_i)$ and put $x_j:=x/(p_1\cdots p_{j-1})$ for any $1 \le j\le k+1$. We have the following lower bound:
	\begin{equation}\label{eq:sum bj interval}
	\PP \left( \forall j \  \ a_j\le B_j \le b_j \right) \ge \sum _{\lceil \delta x \rceil  \le n \le x}  \sum _{\substack{p_1,\dots ,p_k \mbox{ distinct}\\x_j^{a_j}\le p_j \le (\delta x_j)^{b_j}, \ p_j \mid \mid n }} \PP \left( N_x=n,\,  \forall j \ \  P_j=p_j \right).
	\end{equation}
By the definition of a sized-biased permutation, when $p_1,\dots ,p_k $ are distinct and $p_j \mid \mid n$ we have 	
	\begin{equation}
	\PP \left( N_x=n, \, \forall j \ \  P_j=p_j \right)=\frac{\alpha(n)}{\sum _{m \le x} \alpha(m)} \prod _{j=1}^k \frac{ \log p_j}{\log \left( \frac{n}{p_1 \cdots p_{j-1}} \right)} \ge \frac{\alpha(n)}{\sum _{m \le x} \alpha(m)} \prod _{j=1}^k \frac{\log p_j}{\log  x_j}
	\end{equation}
	for any $n \le x$. Thus, changing the order of summation in \eqref{eq:sum bj interval} we obtain
	\begin{equation}\label{eq:change order of summation}
	\begin{split}
	\PP \left( \forall j \  \ a_j\le B_j \le b_j \right) & \ge \sum _{\substack{x_1 ^{a_1} \le p_1 \le (\delta x_1)^{b_1}  }}  \cdots  \sum _{\substack{x_k ^{a_k} \le p_k \le (\delta x_k)^{b_k}  \\ p_k \notin \{p_1,\dots p_{k-1}\}}} \sum _{ \substack{\lceil \delta x \rceil \le n \le x \\ p_i \mid \mid n} } \frac{\alpha(n)}{\sum _{m \le x} \alpha(m)} \prod _{j=1}^k \frac{\log p_j}{\log  x_j} \\
	& \ge \frac{1}{\sum _{m \le x} \alpha(m)}\sum _{\substack{x_1 ^{a_1} \le p_1 \le (\delta x_1)^{b_1}}} \frac{\log p_1}{\log x_1} \cdots \!\! \sum _{\substack{x_k ^{a_k} \le p_k \le (\delta x_k)^{b_k}  \\ p_k \notin \{p_1,\dots p_{k-1}\}}} \frac{\log p_k}{\log x_k} \sum _{ \substack{\lceil \delta x \rceil \le n \le x \\ p_i \mid \mid n}}\alpha(n).
	\end{split}
	\end{equation}
	Next, we would like to use Lemma~\ref{lem:lower bound on partial sum} in order to lower bound the inner sum in \eqref{eq:change order of summation}. We have $x_j=p_j x_{j+1}\le (\delta x_j)^{b_j} x_{j+1}\le x_j^{b_j} x_{j+1}$ and therefore, for any $1\le j \le k$ we have $x_{j+1}\ge x_{j}^{1-b_j}\ge \cdots \ge  x^{\nu }$ where $\nu :=\prod _{j=1}^k (1-b_j)$. Thus $p_j\ge x^{a_j \nu}\ge x^\delta $ and moreover $p_1 \cdots p_k=x/x_{k+1}\le x^{1-\nu } \le x^{1-\delta }$. We get that the assumptions of Lemma~\ref{lem:lower bound on partial sum} hold and therefore using also Corollary~\ref{cor:AsympSum} we obtain
	\begin{equation}\label{eq:using the lemma}
	\begin{split}
	& \PP \left( \forall j \  \ a_j\le B_j \le b_j \right) 
	\ge \frac{1-4\delta ^{d+1}}{\log ^{\theta -1}x }\sum _{\substack{x_1 ^{a_1} \le p_1 \le (\delta x_1)^{b_1}  }} \frac{\alpha(p_1) \log p_1}{p_1^{d+1} \log x_1} \cdots \!\! \sum _{\substack{x_k ^{a_k} \le p_k \le (\delta x_k)^{b_k}   \\ p_k \notin \{p_1,\dots p_{k-1}\}}} \frac{\alpha(p_k) \log p_k}{p_k^{d+1} \log x_k} \log ^{\theta -1} x_{k+1} \\
	&=\left( 1-4\delta ^{d+1} \right)\sum _{\substack{x_1 ^{a_1} \le p_1 \le (\delta x_1)^{b_1}  }} \frac{\alpha(p_1) \log p_1}{p_1 ^{d+1} \log x_1} \left( 1-\frac{\log p_1}{\log x_1} \right)^{\theta -1} \cdots \!\! \sum _{\substack{x_k ^{a_k} \le p_k \le (\delta x_k)^{b_k}  \\ p_k \notin \{p_1,\dots p_{k-1}\}}} \frac{\alpha(p_k) \log p_k}{p_k ^{d+1} \log x_k} \left( 1-\frac{\log p_k}{\log x_k} \right)^{\theta -1}
	\end{split}
	\end{equation}
	for sufficiently large $x$. Consider the innermost sum. We define the function $g=g_{n_k}\colon [x_k^{a_k},(\delta x_k)^{b_k}]\to \RR$ by
	\begin{equation}
	g(t):=\frac{\log t }{t} \left(1-\frac{\log t}{\log x_k}\right)^{\theta -1}.
	\end{equation}
	We apply Lemma~\ref{lem:sum on primes} with $\alpha(n)/n^d$ and this $g$, obtaining
	\begin{equation}
	\begin{split}
	\sum _{\substack{x_k ^{a_k} \le p_k \le (\delta x_k)^{b_k}  \\ p_k \notin \{p_1,\dots p_{k-1}\}}} \frac{\alpha(p_k) \log p_k}{p_k ^{d+1} \log x_k} \left( 1-\frac{\log p_k}{\log x_k} \right)^{\theta -1}=O_{\delta}\left(\frac{1}{\log x}\right) + \frac{1}{\log x_k}\sum _{\substack{x_k ^{a_k} \le p_k \le (\delta x_k)^{b_k}  }} \frac{\alpha(p_k)}{p_k^ d} g(p_k)  \\
	=O_{\delta}\left(\frac{1}{\log x}\right) + \frac{1}{ \log x_k} \intop _{x_k^{a_k}} ^{(\delta x_k)^{b_k}} \frac{\theta }{t} \left( 1-\frac{\log t}{\log x_k} \right)^{\theta -1} \, \mathrm{d}t= O_{\delta}\left(\frac{1}{\log x}\right)+  \intop_{a_k} ^{b_k} \theta (1-y)^{\theta -1} \, \mathrm{d}y
	\end{split}
	\end{equation}
	where in the last equality we performed the change of variables $y=\log t / \log x_k$. Substituting the last estimate into \eqref{eq:using the lemma} we get the same expression with $k$ replaced by $k-1$. Thus, for sufficiently large $x$ depending on $\delta $,
	\begin{equation}
	\PP \left( \forall j \  \ a_j\le B_j \le b_j \right) \ge (1-5\delta^{d+1}) \prod _{j=1}^k \intop _{a_j} ^{b_j} \theta (1-y)^{\theta -1} \, \mathrm{d}y=(1-5\delta^{d+1})  \PP \left( \forall j \ a_j\le Y_j \le b_j \right),
	\end{equation}
and so
	\begin{equation}
	\liminf _{x\to \infty } \PP \left( \forall j \  \ a_j\le B_j \le b_j \right) \ge (1-5\delta^{d+1})  \PP \left( \forall j \ \ a_j \le Y_j \le b_j \right).
	\end{equation}
	Since $\delta $ is arbitrary it follows that \eqref{eq:LowerBoundProb} holds, as needed. 
\end{proof}

\subsection{Proof of first part of Theorem~\ref{thm:EKDG}}
\subsubsection{Preparatory results}\label{sec:auxKac}
We need the following results from probability, which are given as Remarks 1, 2 and 3 in \cite{billingsley1969}.
\begin{lem}\label{lem:zero exp}
	Let $D_x$, $E_x$ be random variables defined for any $x \gg 1$.
	\begin{enumerate}
		\item If $D_x \overset{d}{\longrightarrow} 1$ and $E_x \overset{d}{\longrightarrow} 0$, then $U_x \overset{d}{\longrightarrow} N(0,1)$ if and only if $D_x U_x + E_x \overset{d}{\longrightarrow} N(0,1)$.
		\item Let $X \sim N(0,1)$. If $\EE D^k_x \to \EE X^k$ for each $k \ge 1$ then $D_x \overset{d}{\longrightarrow} N(0,1)$.
		\item Let $X \sim N(0,1)$. If $D_x \overset{d}{\longrightarrow} N(0,1)$ and if $\sup_{x} \EE |D_x|^{k+\varepsilon} < \infty$ for some $\varepsilon>0$ then $\EE D_x^k \to \EE X^k$.
	\end{enumerate}
\end{lem}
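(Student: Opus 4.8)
The plan is to deduce each of the three parts from a standard result in probability theory, applied to the given sequences of random variables.

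\emph{Part 1.} Convergence in distribution to a constant is equivalent to convergence in probability, so $D_x \overset{P}{\longrightarrow} 1$ and $E_x \overset{P}{\longrightarrow} 0$. If $U_x \overset{d}{\longrightarrow} N(0,1)$, then Slutsky's theorem gives $D_x U_x \overset{d}{\longrightarrow} N(0,1)$, and adding $E_x$ (which tends to $0$ in probability) gives $D_x U_x + E_x \overset{d}{\longrightarrow} N(0,1)$. For the converse, $D_x U_x = (D_x U_x + E_x) - E_x \overset{d}{\longrightarrow} N(0,1)$ by Slutsky again; since $D_x \overset{P}{\longrightarrow} 1$, the event $\{D_x \neq 0\}$ has probability tending to $1$ and $D_x^{-1} \overset{P}{\longrightarrow} 1$ on it, so $U_x = D_x^{-1}(D_x U_x) \overset{d}{\longrightarrow} N(0,1)$, the exceptional event $\{D_x = 0\}$ being asymptotically negligible.

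\emph{Part 2.} This is the method of moments (the Fr\'echet--Shohat theorem). Since $\EE D_x^k \to m_k := \EE X^k$ for every $k \ge 1$ and the $m_k$ are the moments of $N(0,1)$, it suffices to know that $N(0,1)$ is determined by its moments; then convergence of all moments forces $D_x \overset{d}{\longrightarrow} N(0,1)$. Moment-determinacy follows from Carleman's condition $\sum_{k \ge 1} m_{2k}^{-1/(2k)} = \infty$: here $m_{2k} = (2k-1)!! = (2k)!/(2^k k!)$, so by Stirling's approximation $m_{2k}^{1/(2k)}$ is of order $k^{1/2}$ and the series diverges.

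\emph{Part 3.} The point is uniform integrability. The hypothesis $\sup_x \EE|D_x|^{k+\varepsilon} < \infty$ means that $\{D_x^k\}_x$ is bounded in $L^{1+\varepsilon/k}$, hence uniformly integrable. By the continuous mapping theorem, $D_x \overset{d}{\longrightarrow} X$ implies $D_x^k \overset{d}{\longrightarrow} X^k$, and a uniformly integrable family that converges in distribution has convergent means, so $\EE D_x^k \to \EE X^k$. There is no genuine obstacle here, as all three statements are textbook facts; the only places needing mild care are the reverse implication in Part 1 (one must note that $D_x \overset{P}{\longrightarrow} 1$ keeps $D_x$ away from $0$ with high probability) and the verification of Carleman's condition in Part 2.
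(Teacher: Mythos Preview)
Your proposal is correct. The paper does not actually prove this lemma: it is stated as Remarks~1,~2 and~3 of Billingsley~\cite{billingsley1969} and simply quoted without argument. Your treatment supplies the standard justifications (Slutsky for Part~1, moment-determinacy of the Gaussian via Carleman for Part~2, and uniform integrability for Part~3), which is entirely adequate; the only difference from the paper is that you give proofs where the paper gives a citation.
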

Recall that $\Primes$ is the set of primes. By definition,
\begin{equation}\label{eq:OmegAsSum}
\omega (N_x)=\sum _{p\in \Primes}  \mathds{1} _{ \{p \mid N_x \} }.
\end{equation}
\begin{lem}\label{lem:Omega omega}
We have $\EE\left| \Omega(N_x)- \omega(N_x) \right| = O(1)$.
\end{lem}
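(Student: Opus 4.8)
The plan is to bound $\Omega(N_x) - \omega(N_x) = \sum_p (\nu_p(N_x) - \mathds{1}_{p \mid N_x})$ by its expectation and show the latter is $O(1)$. Since $\nu_p(n) - \mathds{1}_{p\mid n} = \sum_{k \ge 2} \mathds{1}_{p^k \mid n}$ for every $n$, we have the pointwise identity
\begin{equation}
\Omega(N_x) - \omega(N_x) = \sum_{p} \sum_{k \ge 2} \mathds{1}_{p^k \mid N_x},
\end{equation}
so by linearity $\EE|\Omega(N_x) - \omega(N_x)| = \sum_{p}\sum_{k\ge 2} \PP(p^k \mid N_x)$, and it suffices to show this double sum is $O(1)$ uniformly in $x$.

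First I would get a clean upper bound for $\PP(p^k \mid N_x)$. By multiplicativity, exactly as in the proof of Lemma~\ref{lem:bound on partial sum_p},
\begin{equation}
\sum_{\substack{n \le x \\ p^k \mid n}} \alpha(n) \le \sum_{j \ge k} \alpha(p^j) \sum_{l \le x/p^j} \alpha(l) \le C \sum_{j \ge k} \frac{\alpha(p^j)}{p^{j(d+1)}} x^{d+1}\log^{\theta-1} x + (\text{boundary term}),
\end{equation}
where I use Corollary~\ref{cor:AsympSum} to bound $\sum_{l \le y}\alpha(l) \le C y^{d+1}\log^{\theta-1} y \le C y^{d+1}\log^{\theta-1}x$ for $y \le x$, and the boundary term handles the case $\lfloor x/p^j \rfloor < 2$. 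Using \eqref{eq:Assumption2} the geometric sum $\sum_{j \ge k}(r/p)^j$ is $O((r/p)^k) = O(r^k/p^k)$, and dividing by $S(x) = \sum_{n\le x}\alpha(n) \asymp x^{d+1}\log^{\theta-1}x$ (again Corollary~\ref{cor:AsympSum}) gives $\PP(p^k \mid N_x) \le C r^k/p^k$ for all $p, k$ with $p^k \le x$ (and the probability is $0$ otherwise). Here I should double-check that the boundary term, after division by $S(x)$, is also $O(r^k/p^k)$ — it is $\alpha(p^{\lfloor \log_p x\rfloor})/(x^{d+1}\log^{\theta-1}x) = O(r^{\log_p x} x^{-d-1})$, which is negligible.

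Then the conclusion follows by summing: since $r < \sqrt 2 < 2$,
\begin{equation}
\sum_{p}\sum_{k \ge 2} \PP(p^k \mid N_x) \le C\sum_{p}\sum_{k \ge 2} \frac{r^k}{p^k} = C \sum_p \frac{r^2/p^2}{1 - r/p} \le C' \sum_p \frac{1}{p^2} = O(1),
\end{equation}
using that $r/p \le r/2 < 1/\sqrt 2$ is bounded away from $1$ for all primes $p \ge 2$. This gives $\EE|\Omega(N_x) - \omega(N_x)| = O(1)$, as claimed.

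I do not expect a genuine obstacle here; the statement is a soft consequence of the uniform bound $\PP(p^k \mid N_x) = O(r^k/p^k)$, which is essentially a refinement of Lemma~\ref{lem:bound on partial sum_p} tracking the dependence on $k$. The only points requiring a little care are the treatment of the boundary term when $p$ is large relative to $x$, and making sure the implied constants in the geometric-series bounds are uniform in both $p$ and $k$ — both of which are handled by the crude estimates above. (Strictly speaking one could even avoid the absolute value entirely, since $\Omega(N_x) \ge \omega(N_x)$ pointwise, so $\EE|\Omega(N_x)-\omega(N_x)| = \EE\Omega(N_x) - \EE\omega(N_x)$.)
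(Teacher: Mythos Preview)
Your approach is sound but takes a genuinely different route from the paper. The paper uses an exponential-moment trick: it introduces the auxiliary multiplicative function $\widetilde\alpha(n) = \alpha(n)\,t^{\Omega(n)-\omega(n)}$ for a fixed $t \in (1,\sqrt{2}/\Ewensc)$, notes that $\widetilde\alpha$ still satisfies \eqref{eq:Assumption1}--\eqref{eq:Assumption2} (with $\Ewensc$ replaced by $\Ewensc t$), and applies Corollary~\ref{cor:AsympSum} to both $\alpha$ and $\widetilde\alpha$ to obtain $\EE\, t^{\Omega(N_x)-\omega(N_x)} = A_{\widetilde\alpha}/A_\alpha + o(1) = O(1)$; Jensen then gives the first-moment bound. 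This is shorter and actually yields the stronger conclusion that $\Omega(N_x)-\omega(N_x)$ has a uniformly bounded exponential moment. Your direct term-by-term estimate of $\PP(p^k \mid N_x)$ is more hands-on and gives explicit pointwise control, essentially extending Lemma~\ref{lem:bound on partial sum_p} from $k=1$ to general $k$.

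One genuine caution: your displayed step ``$\sum_{l\le y}\alpha(l)\le C y^{d+1}\log^{\theta-1} y \le C y^{d+1}\log^{\theta-1} x$ for $y\le x$'' fails when $\theta<1$, since $\log^{\theta-1}$ is then decreasing. This is not just a boundary issue at $\lfloor x/p^j\rfloor<2$; it affects every $y$ with $y=o(x)$. To make the argument go through uniformly in $\theta$, you should split the sum over $j$ at $p^j\le\sqrt{x}$ versus $p^j>\sqrt{x}$, exactly as in the derivation of \eqref{eq:sum_pdiv2}, and use the extra geometric decay in the large-$j$ range to kill the stray factor of $\log x$. With that adjustment your proof is complete.
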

\begin{proof}
Consider the multiplicative function $\widetilde{\alpha}(n) := \alpha(n) t^{\Omega(n)-\omega(n)}$, where we choose $1<t<2/\Ewensc$, so that $\widetilde{\alpha}$ will still satisfy \eqref{eq:Assumption1}--\eqref{eq:Assumption2} with the same parameters, except $\Ewensc$ replaced with $\Ewensc  t$.
Applying Corollary~\ref{cor:AsympSum} with $\alpha$ and $\widetilde{\alpha}$, we obtain that
\begin{equation}
\EE t^{\Omega(N_x)-\omega(N_x)} = \frac{\sum_{n \le x} \widetilde{\alpha}(n)}{\sum_{n \le x} \alpha(n)} =\frac{A_{\widetilde{\alpha}}}{A_{\alpha}}(1+o(1))= O_t(1)
\end{equation}
which, by Jensen's inequality for instance, implies that $\EE\left| \Omega(N_x)- \omega(N_x) \right|$ is bounded as $x \to \infty$.
\end{proof}
For $x \gg 1$, we define the subset
\begin{equation}
\Primes _x:=\left\{ p \in \Primes : \  \log ^4 x  \le p\le \exp \left( \exp \left( \log \log x-\log ^{\frac{1}{3}} \log x \right) \right)   \right\}.
\end{equation}
For each prime $p \gg 1$, define a Bernoulli random variable $X_p$ such that $\PP(X_p = 1) = \alpha(p)/p^{d+1}$ (for $p \gg 1$, this is in $[0,1]$) and such that the different $X_p$s are independent. Define $\sigma_x$ by
\begin{equation} 
\sigma^2_x := \sum_{p \in \Primes_{x}} \frac{\alpha(p)}{p^{d+1}}\left(1 - \frac{\alpha(p)}{p^{d+1}} \right).
\end{equation}
We define the random variables
\begin{equation}
A_x := \frac{\omega (N_x)-\theta \log \log x}{\sqrt{\theta \log \log x }}, \quad B_x := \frac{\sum_{p \in \Primes _x} \left( \mathds{1} _{ \{p \mid N_x \} } -\frac{\alpha(p) }{p^{d+1}} \right)}{\sigma_x}, \quad C_x := \frac{\sum_{p \in \Primes _x} \left( X_p -\frac{\alpha(p) }{p^{d+1}} \right)}{\sigma_x}.
\end{equation}
\begin{lem}\label{lem:sum primes erdos}
We have $\sum_{p \in \Primes_{x}} \alpha(p)/p^{d+1} = \theta \log \log x + O(\log^{\frac{1}{3}} \log x)$ and $\sigma_x^2 = \theta \log \log x + O(\log^{\frac{1}{3}} \log x)$.
\end{lem}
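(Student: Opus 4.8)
The plan is to deduce both estimates from Lemma~\ref{lem:sum on primes}, applied to the function $g(t)=1/t$ on the interval
\[
I=\left[\,\log^2 x,\ \exp\!\left(\exp\!\left(\log\log x-\log^{1/3}\log x\right)\right)\right],
\]
so that $\sum_{p\in\Primes_x}\alpha(p)/p^{d+1}=\sum_{p\in I}\alpha(p)g(p)/p^{d}$. First I would check the hypotheses of Lemma~\ref{lem:sum on primes}: for $t\in I$ we have $\log t\ge 2\log\log x\to\infty$, hence $|g(t)/\log t|=1/(t\log t)\le M/t$ and $|(g(t)/\log t)'|=|-1/(t^2\log t)-1/(t^2\log^2 t)|\le M/t^2$ both hold with an absolute constant $M$ once $x$ is large. (One should also note that $I$ is a nonempty interval for large $x$, since $\exp(\log\log x-\log^{1/3}\log x)$ grows faster than $2\log\log x$.) Lemma~\ref{lem:sum on primes} then yields
\[
\sum_{p\in\Primes_x}\frac{\alpha(p)}{p^{d+1}}=\theta\int_I\frac{dt}{t\log t}+O(1).
\]

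Next I would evaluate the integral using the antiderivative $\int dt/(t\log t)=\log\log t$. The upper endpoint contributes $\log\log x-\log^{1/3}\log x$, while the lower endpoint contributes $\log(2\log\log x)=O(\log\log\log x)$, so
\[
\int_I\frac{dt}{t\log t}=\log\log x-\log^{1/3}\log x+O(\log\log\log x)=\log\log x+O(\log^{1/3}\log x),
\]
which gives the first claim.

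Finally, for $\sigma_x^2$, I would write $\sigma_x^2=\sum_{p\in\Primes_x}\frac{\alpha(p)}{p^{d+1}}-\sum_{p\in\Primes_x}\bigl(\frac{\alpha(p)}{p^{d+1}}\bigr)^2$. The first sum was just handled, and by \eqref{eq:Assumption2} we have $\alpha(p)/p^{d}=O(\Ewensc)=O(1)$, so $(\alpha(p)/p^{d+1})^2=O(1/p^2)$ and the second sum is $O\bigl(\sum_p 1/p^2\bigr)=O(1)$. This gives $\sigma_x^2=\theta\log\log x+O(\log^{1/3}\log x)$.

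There is essentially no serious obstacle here. The only mild points requiring care are confirming that the parameter $M$ in Lemma~\ref{lem:sum on primes} may be taken independent of $x$ (which uses $\log t\ge 2\log\log x\to\infty$ throughout $I$) and correctly accounting for the contribution of the two endpoints of $I$ to the iterated logarithm.
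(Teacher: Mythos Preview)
Your proposal is correct and follows essentially the same approach as the paper: apply Lemma~\ref{lem:sum on primes} with $g(t)=1/t$ on the interval $I$, evaluate $\int_I dt/(t\log t)$ via $\log\log t$ at the endpoints, and then handle $\sigma_x^2$ by bounding $\sum_p(\alpha(p)/p^{d+1})^2$ via \eqref{eq:Assumption2}. The paper's proof is simply a terser version of what you wrote.
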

\begin{proof}
Applying Lemma~\ref{lem:sum on primes} with $\alpha(p)/p^d$ in place of $\alpha$, $g(t)=1/t$ and the interval $[\log^4 x, \exp(\exp(\log \log x -\log^{1/3} \log x))]$, we find that $\sum_{p \in \Primes_{x}} \alpha(p)/p^{d+1} = \theta \log \log x + O(\log^{\frac{1}{3}} \log x)$.
Since $\sum_{p \in \Primes} \alpha^2(p)/p^{2(d+1)}=O(1)$, the estimate for $\sigma_x^2$ follows from the first estimate.
\end{proof}
\begin{lem}\label{lem:mom c bounded}
For each integer $k \ge 1$, we have $\sup_{x} |\EE C_x^k| < \infty$. In particular, $\sup_{x} \EE |C_x|^{2k} < \infty$.
\end{lem}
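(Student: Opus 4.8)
The plan is to recognize $C_x$ as a normalized sum of independent, centered, uniformly bounded random variables and then run the classical moment computation. Write $Y_p := X_p - \alpha(p)/p^{d+1}$, so that $C_x = \sigma_x^{-1}\sum_{p\in\Primes_x} Y_p$ where the $Y_p$ are independent, $\EE Y_p = 0$, $|Y_p|\le 1$, and $\sum_{p\in\Primes_x}\EE Y_p^2 = \sigma_x^2$. It suffices to bound the even moments $\EE C_x^{2m}$ for every integer $m\ge1$: indeed $|\EE C_x^k|\le \EE|C_x|^k \le 1+\EE C_x^{2\lceil k/2\rceil}$ by the power-mean inequality, while for even $k$ one has $\EE|C_x|^{k}=\EE C_x^{k}$, which also covers the ``in particular'' clause.

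So fix $m\ge1$ and expand
\begin{equation}
\EE C_x^{2m} = \frac{1}{\sigma_x^{2m}} \sum_{p_1,\dots,p_{2m}\in\Primes_x} \EE\big[Y_{p_1}\cdots Y_{p_{2m}}\big].
\end{equation}
By independence the summand equals $\prod_q \EE Y_q^{m_q}$, where $m_q$ is the multiplicity of $q$ among $p_1,\dots,p_{2m}$, and it vanishes whenever some $m_q=1$. Grouping the tuples by the set partition of $\{1,\dots,2m\}$ they induce, only partitions all of whose blocks have size at least $2$ survive; such a partition has $j\le m$ blocks. Using $|Y_q|\le1$ (hence $|Y_q|^{m_q}\le Y_q^2$ when $m_q\ge2$) we get $|\EE Y_q^{m_q}|\le \EE Y_q^2$ for each block, so summing over the choice of distinct primes for the $j$ blocks the total contribution of a fixed partition is at most $\big(\sum_{q\in\Primes_x}\EE Y_q^2\big)^j=\sigma_x^{2j}$. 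Since by Lemma~\ref{lem:sum primes erdos} we have $\sigma_x^2=\theta\log\log x+O(\log^{1/3}\log x)\ge1$ for $x\gg1$, and $j\le m$, this is at most $\sigma_x^{2m}$. As the number of set partitions of $\{1,\dots,2m\}$ into blocks of size $\ge2$ is a finite constant $B_m$ depending only on $m$, we conclude $\EE C_x^{2m}\le B_m$ uniformly in $x\gg1$.

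There is no real obstacle here — this is the textbook moment method for sums of independent bounded summands. The only point requiring a little care is the normalization: to absorb the lower-order contributions $\sigma_x^{2j}$ with $j<m$ into $\sigma_x^{2m}$ one needs $\sigma_x$ bounded away from $0$, which is exactly (a weak form of) the content of Lemma~\ref{lem:sum primes erdos}. One could instead track the powers of $\log\log x$ explicitly, but invoking $\sigma_x\ge1$ is cleaner.
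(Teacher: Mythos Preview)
Your proof is correct and follows essentially the same approach as the paper: expand the moment, use independence and $\EE Y_p=0$ to kill any term containing a singleton, bound each surviving factor $|\EE Y_q^{m_q}|$ by $\EE Y_q^2$ via $|Y_p|\le 1$, and then use $\sigma_x\ge 1$ (from Lemma~\ref{lem:sum primes erdos}) to absorb the subleading powers $\sigma_x^{2j}$, $j<m$. The only cosmetic differences are that the paper bounds $|\EE C_x^k|$ directly for arbitrary $k$ (organizing the expansion via multinomial coefficients in ordered multiplicities $(l_1,\dots,l_m)$) rather than first reducing to even moments, and that you phrase the combinatorics in terms of set partitions; neither changes the substance.
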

\begin{proof}
	Let $Y_p := X_p - \alpha(p)/p^{d+1}$.
	We have the following expansion:
	\begin{equation}\label{eq:the kth moment}
	\EE C_x^k = \frac{1}{\sigma_x^k} \sum _{p_1,\dots ,p_k \in \Primes _x} \EE \left[ Y_{p_1} \cdots Y_{p_k} \right]= \frac{1}{\sigma_x^k} \sum _{m=1}^k \sum _{ \substack{\  l_1,\dots , l_m \ge 1  \\ \sum l_i =k }  } \binom{k}{l_1, \dots , l_m}  S(l_1,\dots , l_m),
	\end{equation}
	where
	\begin{equation}
	S(l_1, \dots ,l_m):=\sum _{ \substack{q_1,\dots ,q_m \in \Primes _x \\ q_1<\cdots <q_m } } \EE \left[ Y_{q_1}^{l_1}\right] \EE \left[ Y_{q_2}^{l_2}\right]  \cdots \EE \left[ Y_{q_m}^{l_m} \right] .
	\end{equation}
	As we have $\EE Y_p=0$, it follows that $S(l_1,\dots, l_m)$ vanishes if $l_i=1$ for some $i$, and so we may restrict the summation in \eqref{eq:the kth moment} to $l_i \ge 2$. Since $|Y_p| \le 1$ (for $p \gg 1$), it follows that if $l_i \ge 2$ then $|\EE \left[ Y_{q_i}^{l_i} \right]| \le \EE \left[ Y_{q_i}^{2} \right]$, so that
	\begin{equation}
	S(l_1,\dots, l_m) \le \left( \sum_{p \in \Primes_{x}} \EE \left[ Y_p^2 \right] \right)^m = \sigma_x^{2m}. 
	\end{equation} 
	As $\sum_{i=1}^{m} l_i = k$ and $l_i \ge 2$, it follows that $2m \le k$. For $x \gg 1$ we have $\sigma_x \ge 1$ by Lemma~\ref{lem:sum primes erdos}, and so 
	\begin{equation}\label{eq:the kth moment2}
	\left| \EE C_x^k \right| \le   \sum _{m=1}^k \frac{\sigma_x^{2m}}{\sigma_x^k}\sum _{ \substack{\  l_1,\dots , l_m \ge 2  \\ \sum l_i =k }  } \binom{k}{l_1, \dots , l_m}  \le  \sum _{m=1}^k \sum _{ \substack{\  l_1,\dots , l_m \ge 2  \\ \sum l_i =k }  } \binom{k}{l_1, \dots , l_m} 
	\end{equation}
	as needed.
\end{proof}

\begin{lem}\label{lem:prob that q_1 q_m|N}
Fix $m \ge 1$. Let $q_1,\dots ,q_m \in \Primes _x$ be distinct primes. We have for $x \gg 1$
\begin{equation}
\left| \PP \left( \forall j, \  q_j \mid N_x \right)-\prod _{j=1}^m \frac{\alpha(q_j)}{q_j^{d+1}} \right|\le C_m \exp \left( -\log ^{\frac{1}{4}} \log x \right) \prod _{j=1}^m \frac{1+\alpha(q_j)q_j^{-d}}{q_j}.
\end{equation}
\end{lem}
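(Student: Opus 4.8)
The plan is to unfold the probability as $\PP(\forall j,\ q_j\mid N_x)=S(x)^{-1}\sum_{n\le x,\ Q\mid n}\alpha(n)$, where $Q:=q_1\cdots q_m$ (and $\forall j\ q_j\mid n$ is equivalent to $Q\mid n$ since the $q_j$ are distinct primes), and to evaluate the numerator by multiplicativity. Writing each $n\le x$ with $Q\mid n$ uniquely as $n=q_1^{a_1}\cdots q_m^{a_m}\,l$ with $a_j\ge 1$ and $q_j\nmid l$ for every $j$, multiplicativity gives
\begin{equation}
\sum_{\substack{n\le x\\ Q\mid n}}\alpha(n)=\sum_{a_1,\dots,a_m\ge 1}\Bigl(\prod_{j=1}^{m}\alpha(q_j^{a_j})\Bigr)\sum_{\substack{l\le x/(q_1^{a_1}\cdots q_m^{a_m})\\ q_j\nmid l\ \forall j}}\alpha(l),
\end{equation}
and I will treat the diagonal term $a_1=\cdots=a_m=1$ as the main term and bound the rest as error.

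Two features of $\Primes_x$ enter throughout. First, $q_j\le\exp(\exp(\log\log x-\log^{1/3}\log x))$ forces $\log q_j\le(\log x)\,e^{-\log^{1/3}\log x}$, hence $\log Q\le m(\log x)\,e^{-\log^{1/3}\log x}=o(\log x)$; in particular $x/Q\to\infty$ and $q_j\le\sqrt{x/Q}$ for $x$ large. Second, $q_j\ge\log^2 x$. For the diagonal term, apply Corollary~\ref{cor:AsympSum} to $\sum_{l\le x/Q}\alpha(l)$ and remove the coprimality constraint by a union bound combined with Lemma~\ref{lem:bound on partial sum_p}; using $q_j\ge\log^2 x$ to absorb the union-bound error, one gets $\sum_{l\le x/Q,\ q_j\nmid l\ \forall j}\alpha(l)=A_\alpha(x/Q)^{d+1}\log^{\theta-1}(x/Q)(1+O(1/\log x))$. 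Since $\log^{\theta-1}(x/Q)=\log^{\theta-1}x\cdot(1-\log Q/\log x)^{\theta-1}=\log^{\theta-1}x\,(1+O(\log Q/\log x))$ and $\log Q/\log x=O(m\,e^{-\log^{1/3}\log x})$, the diagonal term equals $A_\alpha x^{d+1}\log^{\theta-1}x\prod_{j=1}^{m}\bigl(\alpha(q_j)/q_j^{d+1}\bigr)\cdot(1+O(m\,e^{-\log^{1/3}\log x}))$.

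For the off-diagonal terms (some $a_j\ge 2$), use $\alpha(q_j^{a_j})\le C\Ewensc^{a_j}q_j^{da_j}$ from \eqref{eq:Assumption2} and $\sum_{l\le y}\alpha(l)\le Cy^{d+1}\log^{\theta}x$ for $2\le y\le x$ from Corollary~\ref{cor:AsympSum}. The tuples with $q_1^{a_1}\cdots q_m^{a_m}>x/2$ contribute at most $x^{d+o(1)}$ in all: then $\sum_j a_j\le\log x/(2\log\log x)$, so $\Ewensc^{\sum_j a_j}=x^{o(1)}$, there are $x^{o(1)}$ such tuples, and the inner sum over $l$ is at most $1$; after dividing by $S(x)\asymp x^{d+1}\log^{\theta-1}x$ this is $x^{-1+o(1)}$, negligible against $e^{-\log^{1/3}\log x}\prod_j 1/q_j$. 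For the remaining off-diagonal tuples the inner sum is at most $C(x/\prod_j q_j^{a_j})^{d+1}\log^{\theta}x$, so each one contributes at most $C^{m+1}x^{d+1}\log^{\theta}x\prod_{j}(\Ewensc/q_j)^{a_j}$; summing the geometric series over all $a_j\ge 1$ and subtracting the all-ones term leaves, since $\sum_j 1/q_j\le m/\log^2 x$, a contribution $O_m(x^{d+1}\log^{\theta-2}x\prod_j 1/q_j)$.

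Dividing everything by $S(x)=A_\alpha x^{d+1}\log^{\theta-1}x(1+O(1/\log x))$ and assembling the pieces yields $\PP(\forall j,\ q_j\mid N_x)=\prod_{j=1}^{m}\bigl(\alpha(q_j)/q_j^{d+1}\bigr)+O_m\bigl(e^{-\log^{1/3}\log x}\bigr)\prod_j 1/q_j$, where at the last step we use $\alpha(q_j)/q_j^{d+1}=O(1/q_j)$ (i.e.\ \eqref{eq:Assumption2} with $k=1$) to turn the relative errors on the main term into the stated shape, and $1/\log x\le e^{-\log^{1/3}\log x}$ for $x$ large (since $\log\log x\ge(\log\log x)^{1/3}$) to convert the $\log^{-1}x$-type errors. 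This is the claim. There is no conceptual difficulty; the whole task is to keep track of the errors at the resolution $e^{-\log^{1/3}\log x}$, and the one step where the exact shape of $\Primes_x$ is indispensable is the comparison of $\log^{\theta-1}(x/Q)$ with $\log^{\theta-1}x$: the upper cutoff $\exp(\exp(\log\log x-\log^{1/3}\log x))$ is chosen precisely so that $\log Q$ is a factor $e^{-\log^{1/3}\log x}$ smaller than $\log x$, while the lower cutoff $\log^2 x$ is what permits all of the $\log^{-1}x$-sized errors — from Corollary~\ref{cor:AsympSum}, from the union bound, and from the off-diagonal sum — to be absorbed into $e^{-\log^{1/3}\log x}$.
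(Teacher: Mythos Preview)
Your argument is correct and mirrors the paper's proof almost exactly: the same multiplicative decomposition into exponent tuples $(a_1,\ldots,a_m)$, the same treatment of the diagonal term via Corollary~\ref{cor:AsympSum} plus the union bound from Lemma~\ref{lem:bound on partial sum_p} (specifically \eqref{eq:sum_pdiv2}), and the same geometric-series bound on the off-diagonal contribution using \eqref{eq:Assumption2}. The only cosmetic difference is your split of the off-diagonal into the ranges $\prod q_j^{a_j}>x/2$ and $\prod q_j^{a_j}\le x/2$; the paper avoids this by observing that the bound $\sum_{l\le y}\alpha(l)\le C y^{d+1}\log^{\theta}x$ already holds (trivially) for $0\le y<2$, so no separate treatment of the tail is needed.
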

\begin{proof}
By multiplicativity of $\alpha$ we have 
\begin{equation}\label{eq:alphaqsum}
\sum _{\substack{ n\le x \\ q_1\cdots q_m \mid n }} \alpha(n)=\sum _{l_1=1}^\infty \cdots \sum _{l_m=1}^{\infty } \sum_{ \substack{n\le x \\ \forall j, \ q_j^{l_j}\mid\mid n }}  \alpha(n)= \sum _{l_1=1}^\infty \cdots \sum _{l_m=1}^{\infty } \prod _{j=1}^m \alpha(q_j^{l_j}) \sum _{ \substack{l=1 \\  \forall j, \ q_j \nmid l }}^{ \lfloor \frac{x}{q_1^{l_1} \cdots  q_m^{l_m} } \rfloor }  \alpha(l).
\end{equation}
The term corresponding to $(l_1,\ldots ,l_m)=(1,\ldots ,1)$ in \eqref{eq:alphaqsum} may be estimated as
\begin{multline}\label{eq:li ones}
\left| \prod _{j=1}^m \alpha(q_j) \sum _{ \substack{l=1 \\  \forall j, \ q_j \nmid l }}^{ \lfloor \frac{x}{q_1 \cdots  q_m } \rfloor }  \alpha(l)- \prod _{j=1}^m \alpha(q_j) \sum _{ l=1 }^{ \lfloor \frac{x}{q_1 \cdots  q_m } \rfloor }  \alpha(l) \right| \le \prod _{j=1}^m \alpha(q_j) \sum _{k=1}^m \sum _{ \substack{l=1 \\   \ q_k \mid l }}^{ \lfloor \frac{x}{q_1 \cdots  q_m } \rfloor }  \alpha(l) \\
\le C x^{d+1}  \log ^{\theta -1} x \prod _{j=1}^m \frac{\alpha(q_j)}{q_j^{d+1}} \sum _{k=1}^m \frac{1+\alpha(q_k)q_k^{-d}}{q_k} \le C_m x^{d+1} \log^{\theta -3 } x \prod_{j=1}^{m} \frac{\alpha(q_j)}{q_j^{d+1}},
\end{multline}
where in the second inequality we used \eqref{eq:sum_pdiv2} and \eqref{eq:Assumption2}. Using Corollary~\ref{cor:AsympSum} we get
\begin{equation}\label{eq:main l_i ones}
\begin{split}
\prod _{j=1}^m \alpha(q_j) \sum _{ l=1 }^{ \lfloor \frac{x}{q_1 \cdots  q_m } \rfloor }  \alpha(l)&= (1+O(\log^{-a} x)) A_{\alpha}  x^{d+1} \log ^{\theta -1} \left(\frac{x}{q_1 \cdots q_m}\right) \prod _{j=1}^m \frac{\alpha(q_j)}{q_j^{d+1}} \\
&= \left( 1+O\left( \frac{\log^{\logexp} (q_1\cdots q_m)}{\log^{\logexp} x} \right)  \right) A_{\alpha} x^{d+1} \log ^{\theta -1} x \prod _{j=1}^m \frac{\alpha(q_j)}{q_j^{d+1}}\\
&=\left( 1+O_m\left( \exp \left( -\log ^{\frac{1}{4}} \log x \right)\right)\right) A_{\alpha} x^{d+1} \log ^{\theta -1} x \prod _{j=1}^m \frac{\alpha(q_j)}{q_j^{d+1}}.
\end{split}
\end{equation}
The contribution of terms with $(l_1,\dots ,l_m)\neq (1,\dots ,1)$ to \eqref{eq:alphaqsum} may be bounded using Corollary~\ref{cor:AsympSum} as follows:
\begin{equation}\label{eq:li non ones}
\begin{split}
\sum _{ (l_1,\dots ,l_m)\neq (1,\dots ,1) } & \prod _{j=1}^m \alpha(q_j^{l_j}) \sum _{ \substack{l=1 \\  \forall j, \ q_j \nmid l }}^{ \lfloor \frac{x}{q_1^{l_1} \cdots  q_m^{l_m} } \rfloor }  \alpha(l) \le C x^{d+1}\log ^{\theta } x \sum _{(l_1,\dots ,l_m)\neq (1,\dots ,1)} \prod _{j=1}^m \frac{\alpha(q_j^{l_j})}{q_j^{(d+1)l_j}}\\
&\le C_m x^{d+1} \log ^{\theta} x \sum _{j=1}^{m} \left( \sum _{l_j=2}^{\infty} \frac{\alpha(q_j^{l_j})}{q_j^{(d+1)l_j}} \right) \prod _{\substack{i=1 \\ i\neq j}}^{m} \left(\sum _{l_i=1}^{\infty} \frac{\alpha(q_i^{l_i})}{q_i^{(d+1)l_i}} \right) \\
&\le C_m x^{d+1} \log ^{\theta } x \sum _{j=1}^m \frac{1}{q_j^2} \prod _{\substack{i=1 \\ i\neq j}}^{m} \frac{1+\alpha(q_i)q_i^{-d}}{q_i} \le C_m  x^{d+1} \log ^{\theta -2} x  \prod _{j=1}^{m} \frac{1+\alpha(q_j)q_j^{-d}}{q_j}.
\end{split}
\end{equation}
To conclude the proof, we observe that $ \PP \left( \forall j, \  q_j \mid N_x \right) = \sum _{\substack{ n\le x \\ q_1\cdots q_m \mid n }} \alpha(n) / S(x)$, which combined with \eqref{eq:alphaqsum}, \eqref{eq:li ones}, \eqref{eq:main l_i ones}, \eqref{eq:li non ones} and Corollary~\ref{cor:AsympSum} gives the desired bound. 
\end{proof}
\begin{lem}\label{lem:dif b c mom}
	For each integer $k \ge 1$, we have $\EE B_x^k- \EE C_x^k  \to 0$.
\end{lem}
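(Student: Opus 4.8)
The plan is to compare the $k$-th moments of $B_x$ and $C_x$ by expanding both as sums over $k$-tuples of primes in $\Primes_x$, and showing that the two expansions agree up to an error tending to $0$. Writing $Z_p := \mathds{1}_{\{p \mid N_x\}} - \alpha(p)/p^{d+1}$ and $Y_p := X_p - \alpha(p)/p^{d+1}$ as before, we have
\begin{equation}
\EE B_x^k = \frac{1}{\sigma_x^k} \sum_{p_1,\dots,p_k \in \Primes_x} \EE\left[ Z_{p_1} \cdots Z_{p_k} \right], \qquad \EE C_x^k = \frac{1}{\sigma_x^k} \sum_{p_1,\dots,p_k \in \Primes_x} \EE\left[ Y_{p_1} \cdots Y_{p_k} \right],
\end{equation}
the latter because the $X_p$ are independent. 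Grouping tuples by their underlying set of distinct primes $\{q_1 < \dots < q_m\}$ with multiplicities $l_1,\dots,l_m \ge 1$ summing to $k$ (exactly as in the proof of Lemma~\ref{lem:mom c bounded}), it suffices to compare $\EE[Z_{q_1}^{l_1} \cdots Z_{q_m}^{l_m}]$ with $\EE[Y_{q_1}^{l_1} \cdots Y_{q_m}^{l_m}] = \prod_{j=1}^m \EE[Y_{q_j}^{l_j}]$ for distinct $q_1,\dots,q_m \in \Primes_x$, uniformly over such tuples.

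First I would handle the ``diagonal-heavy'' terms where some $l_i = 1$. On the $C_x$ side these vanish since $\EE Y_p = 0$. On the $B_x$ side, $\EE Z_p$ need not be exactly $0$, but Lemma~\ref{lem:prob that q_1 q_m|N} with $m=1$ gives $|\EE Z_p| = |\PP(p \mid N_x) - \alpha(p)/p^{d+1}| \le C \exp(-\log^{1/3}\log x)/p$; more generally, since each $Z_p$ is bounded (for $p \gg 1$) and $Z_{q_j}^{l_j}$ is a bounded linear combination of $\mathds{1}_{\{q_j \mid N_x\}}$ and $1$, the mixed moment $\EE[Z_{q_1}^{l_1}\cdots Z_{q_m}^{l_m}]$ expands into a bounded combination of probabilities $\PP(\forall j \in T,\ q_j \mid N_x)$ over subsets $T$. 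Applying Lemma~\ref{lem:prob that q_1 q_m|N} to each such probability shows that $\EE[Z_{q_1}^{l_1}\cdots Z_{q_m}^{l_m}] = \prod_{j=1}^m \EE[Z_{q_j}^{l_j}] + C_m \exp(-\log^{1/3}\log x) \prod_{j=1}^m \frac{1}{q_j}$, i.e. the $Z_p$ are ``approximately independent'' with the same per-prime error as in Lemma~\ref{lem:prob that q_1 q_m|N}. Combined with the per-prime estimate $|\EE Z_p| \le C\exp(-\log^{1/3}\log x)/p$, this disposes of all terms containing some $l_i = 1$: their total contribution to $\EE B_x^k$ is $O(\exp(-\log^{1/3}\log x) \sigma_x^{-k} (\sum_{p \in \Primes_x} 1/p)^{m}) = o(1)$ since $\sum_{p \in \Primes_x} 1/p = O(\log\log x)$ and $\sigma_x^2 \asymp \log\log x$ by Lemma~\ref{lem:sum primes erdos}.

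For the remaining terms, where all $l_i \ge 2$ (so $m \le k/2$), I use the approximate independence just established: $\EE[Z_{q_1}^{l_1}\cdots Z_{q_m}^{l_m}] - \prod_j \EE[Z_{q_j}^{l_j}] = O_m(\exp(-\log^{1/3}\log x)\prod_j q_j^{-1})$, and summing over $q_1,\dots,q_m \in \Primes_x$ this contributes $O(\exp(-\log^{1/3}\log x)(\log\log x)^m)$, which after dividing by $\sigma_x^k \asymp (\log\log x)^{k/2} \ge (\log\log x)^m$ is $o(1)$. So it remains to compare $\sigma_x^{-k}\sum \binom{k}{l_1,\dots,l_m} \sum_{q_1 < \dots < q_m} \prod_j \EE[Z_{q_j}^{l_j}]$ with the same expression with $Y$ in place of $Z$. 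Here, for a single prime, $\EE[Z_p^{l}]$ and $\EE[Y_p^{l}]$ are both polynomials in $\PP(p \mid N_x)$ resp. $\PP(X_p = 1) = \alpha(p)/p^{d+1}$ of the same shape, so $|\EE[Z_p^l] - \EE[Y_p^l]| \le C_l |\PP(p\mid N_x) - \alpha(p)/p^{d+1}| \le C_l \exp(-\log^{1/3}\log x)/p$ by Lemma~\ref{lem:prob that q_1 q_m|N}. A standard telescoping over the $m$ factors, using that each $|\EE[Z_{q_j}^{l_j}]|, |\EE[Y_{q_j}^{l_j}]| \le \EE[Y_{q_j}^2] + C\exp(-\log^{1/3}\log x)/q_j \le C/q_j$, bounds $|\prod_j \EE[Z_{q_j}^{l_j}] - \prod_j \EE[Y_{q_j}^{l_j}]|$ by $C_m \exp(-\log^{1/3}\log x)\prod_j q_j^{-1}$; summing over primes and dividing by $\sigma_x^k$ gives $o(1)$ exactly as above. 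Collecting all the pieces yields $\EE B_x^k - \EE C_x^k \to 0$.

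I expect the main obstacle to be purely bookkeeping: organizing the expansion of the mixed moments of the $Z_p$ into the probabilities controlled by Lemma~\ref{lem:prob that q_1 q_m|N}, and checking that every error term carries enough powers of $1/q_j$ to be summed against $\sum_{p \in \Primes_x} 1/p = O(\log\log x)$ and still beat the normalization $\sigma_x^k \asymp (\log\log x)^{k/2}$ — the saving $\exp(-\log^{1/3}\log x)$ from Lemma~\ref{lem:prob that q_1 q_m|N} is what makes this work, since it dominates any fixed power of $\log\log x$. No genuinely new idea beyond Lemmas~\ref{lem:sum primes erdos}, \ref{lem:mom c bounded} and \ref{lem:prob that q_1 q_m|N} seems to be needed.
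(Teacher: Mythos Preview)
Your approach is correct and reaches the same conclusion, but it differs from the paper's route in a notable way. The paper does \emph{not} expand the centered variables $Z_p, Y_p$ directly. Instead it compares the \emph{uncentered} moments $\EE\big(\sum_{p\in\Primes_x}\mathds{1}_{\{p\mid N_x\}}\big)^k$ and $\EE\big(\sum_{p\in\Primes_x}X_p\big)^k$: since both $\mathds{1}_{\{p\mid N_x\}}$ and $X_p$ are indicators, powers collapse ($X_p^{l}=X_p$), so each term is simply $\PP(\forall j,\,q_j\mid N_x)$ versus $\prod_j \alpha(q_j)/q_j^{d+1}$, and Lemma~\ref{lem:prob that q_1 q_m|N} applies in one line. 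Only afterwards does the paper pass to $B_x,C_x$ via the binomial theorem, using Lemma~\ref{lem:sum primes erdos} to control the shift.

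Your route---expanding each $Z_{q_j}^{l_j}$ as an affine combination of $\mathds{1}_{\{q_j\mid N_x\}}$ and $1$, then invoking Lemma~\ref{lem:prob that q_1 q_m|N} on the resulting subset-probabilities---is perfectly valid, and indeed yields directly $\big|\EE[\prod_j Z_{q_j}^{l_j}]-\prod_j\EE[Y_{q_j}^{l_j}]\big|\le C_m\exp(-\log^{1/3}\log x)\prod_j q_j^{-1}$ (the needed $1/q_j$ for $j\notin T$ comes from $|a_{q_j}|=c_{q_j}^{l_j}\le C/q_j$). Note that once you have this bound, your separate treatment of the cases ``some $l_i=1$'' versus ``all $l_i\ge 2$'' and the subsequent telescoping are unnecessary: summing this single estimate over all $(m;l_1,\dots,l_m;q_1,\dots,q_m)$ and dividing by $\sigma_x^k$ already gives $C_k\exp(-\log^{1/3}\log x)(\log\log x)^{k/2}\to 0$. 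So your argument can be shortened; the paper's version is shorter still because the indicator trick avoids the affine expansion entirely.
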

\begin{proof}
As in \eqref{eq:the kth moment}, we have
\begin{equation}\label{eq:the kth moment3}
\EE\bigg( \sum_{p \in \Primes_{x}} X_p \bigg)^k =  \sum _{m=1}^k \sum _{ \substack{\  l_1,\dots , l_m \ge 1  \\ \sum l_i =k }  } \binom{k}{l_1, \dots , l_m}  \sum _{ \substack{q_1,\dots ,q_m \in \Primes _x \\ q_1<\cdots <q_m } } \EE \left[ X_{q_1}^{l_1} X_{q_2}^{l_2} \cdots X_{q_m}^{l_m} \right].
\end{equation}
As $X_p$ assumes the values $1$ and $0$ only, we have $X_{q_i}^{l_i}=X_{q_i}$ and then $\EE [ X_{q_1}^{l_1} \cdots X_{q_m}^{l_m}] = \prod _{j=1}^m \frac{\alpha(q_j)}{q_j^{d+1}}$, so that
\begin{equation}\label{eq:the kth moment3_2}
\EE\bigg( \sum_{p \in \Primes_{x}} X_p \bigg)^k =  \sum _{m=1}^k \sum _{ \substack{\  l_1,\dots , l_m \ge 1  \\ \sum l_i =k }  } \binom{k}{l_1, \dots , l_m}  \sum _{ \substack{q_1,\dots ,q_m \in \Primes _x \\ q_1<\cdots <q_m } } \prod _{j=1}^m \frac{\alpha(q_j)}{q_j^{d+1}}.
\end{equation}
Similarly,
\begin{equation}\label{eq:the kth moment4}
\EE\bigg( \sum_{p \in \Primes_{x}} \mathds{1} _{ \{p \mid N_x \} }\bigg)^k =
\sum _{m=1}^k \sum _{ \substack{\  l_1,\dots , l_m \ge 1  \\ \sum l_i =k }  } \binom{k}{l_1, \dots , l_m}  \sum _{ \substack{q_1,\dots ,q_m \in \Primes _x \\ q_1<\cdots <q_m } }  \PP \left( \forall j, \  q_j \mid N_x \right).
\end{equation}
By Lemma~\ref{lem:prob that q_1 q_m|N} and Lemma~\ref{lem:sum on primes} with $g(t)=1/t$,
\begin{equation}
\begin{split}\label{eq:diff powers}
\bigg| \EE\bigg( \sum_{p \in \Primes_{x}} \mathds{1} _{ \{p \mid N_x \} }\bigg)^k& -  \EE\bigg( \sum_{p \in \Primes_{x}} X_p \bigg)^k \bigg| \\
&\le C_k \exp\left(-\log^{\frac{1}{4}} \log x \right) \sum _{m=1}^k \sum _{ \substack{\  l_1,\dots , l_m \ge 1  \\ \sum l_i =k }  } \binom{k}{l_1, \dots , l_m}  \sum _{ \substack{q_1,\dots ,q_m \in \Primes _x \\ q_1<\cdots <q_m } } \prod_{j=1}^{m} \frac{1+\alpha(q_j)q_j^{-d}}{q_j} \\
&\le C_k \exp\left(-\log^{\frac{1}{4}} \log x \right) \sum _{m=1}^k \sum _{ \substack{\  l_1,\dots , l_m \ge 1  \\ \sum l_i =k }  } \binom{k}{l_1, \dots , l_m}  \left( \sum _{p \le x} \frac{1+\alpha(p)p^{-d}}{p} \right)^m\\
& \le C_k \exp\left(-\log^{\frac{1}{4}} \log x \right) (\log \log x)^k.
\end{split}
\end{equation}
By the binomial theorem,  \eqref{eq:diff powers} and Lemma~\ref{lem:sum primes erdos}, we have
\begin{equation}
\begin{split}
\left| \EE  B_x^k- \EE C_x^k   \right| &\le \sigma_x^{-k} \sum_{i=0}^{k} \binom{k}{i}  \left| \EE\bigg(\sum_{p \in \Primes_{x}} \mathds{1} _{ \{p \mid N_x \} }\bigg)^{i} - \EE\bigg( \sum_{p \in \Primes_{x}} X_p \bigg)^{i} \right|  \left( \sum_{p \in \Primes_{x}} \frac{\alpha(p)}{p^{d+1}}\right)^{k-i}\\
&\le \sigma_x^{-k} C_{k}  \exp\left(-\log^{\frac{1}{4}} \log x \right) \sum_{i=0}^{k} \binom{k}{i}  (\log \log x)^{i} (\log \log x)^{k-i} \\
&\le  C_k(\log \log x)^{k/2}   \exp\left(-\log^{\frac{1}{4}} \log x \right) \to 0
\end{split}
\end{equation}
as needed.
\end{proof}
\subsubsection{Conclusion of proof}
By Lemma~\ref{lem:Omega omega} and the first part of Lemma~\ref{lem:zero exp} with $D_x = 1$ and $E_x = (\Omega(N_x)-\omega(N_x))/\sqrt{\theta \log \log x}$, it follows that \eqref{eq:norm conv} holds if and only if $A_x \overset{d}{\longrightarrow} N(0,1)$. Now let $D_x = \sqrt{\theta \log \log x}/\sigma _x$ and 
\begin{equation}\label{eq:ex}
E_x = \frac{\theta \log \log x - \sum_{p \in \Primes_x} \frac{\alpha(p)}{p^{d+1}}}{\sigma_x} - \frac{\sum_{p \in \Primes \setminus \Primes_x} \mathds{1}_{p \mid N_x}}{\sigma_x}.
\end{equation}
We have $D_x \overset{d}{\longrightarrow} 1$. Moreover, in the sum in the second fraction in \eqref{eq:ex}, there can be at most one non-zero term with $p$ greater than $\sqrt{x}$, and so we may use Lemma~\ref{lem:sum primes erdos} and \eqref{eq:sum_pdiv2} to obtain that
\begin{equation}
\EE |E_x| \le C(\log \log x)^{-\frac{1}{6}} + \frac{1}{\sigma_x}\sum_{\substack{ p \in \Primes \setminus \Primes_{x} \\ p \le \sqrt{x}}} \PP( p \mid N_x) + \frac{1}{\sigma_x} \le o(1) + \frac{C}{\sigma_x}\sum_{\substack{ p \in \Primes \setminus \Primes_{x} \\ p \le \sqrt{x}}}\frac{1+\alpha(p)p^{-d}}{p} \to  0,
\end{equation}
where the last expression tends to $0$ by Lemma~\ref{lem:sum on primes} with $g(t)=1/t$. Hence $E_x \overset{d}{\longrightarrow} 0$.  Since $B_x = D_x A_x + E_x$, it follows by Lemma~\ref{lem:sum primes erdos} and the first part of Lemma~\ref{lem:zero exp} that $A_x \overset{d}{\longrightarrow} N(0,1)$ if and only if $B_x \overset{d}{\longrightarrow} N(0,1)$. By the second part of Lemma~\ref{lem:zero exp}, to establish $B_x \overset{d}{\longrightarrow} N(0,1)$ it suffices to show that $\EE B_x^k \to \EE X^k$ for each $k$, where $X \sim N(0,1)$. By Lemma~\ref{lem:dif b c mom} this is equivalent to $\EE C_x^k \to \EE X^k$. Since the random variables $X_p -\alpha(p)/p^{d+1}$ are uniformly bounded as $p$ varies, and since the denominator of $C_x$ tends to infinity by Lemma~\ref{lem:sum primes erdos}, we have $C_x \overset{d}{\longrightarrow} N(0,1)$ by the Lindeberg--Feller theorem (also known as Central Limit Theorem for triangular arrays) \cite[Thm.~4.7]{petrov1995}. Thus, the moments of $C_x$ converge to those of $X$ by Lemma~\ref{lem:mom c bounded} and the last part of Lemma~\ref{lem:zero exp}. \qed 

\section{Polynomially-growing weights}
\begin{lem}\label{lem:dir series poly}
	Fix a function $f\colon \Primes \to \RR_{>0}$ on primes such that $\log f(p) = o(\log p)$, and let
	\begin{equation}
	G_f(s) = \sum_{p}\frac{f(p)}{p^s}, \quad \Re s > 1.
	\end{equation}
	Let $\alpha\colon \NN \to \RR_{\ge 0}$ be a multiplicative function satisfying 
	\begin{equation}
	\begin{split}\label{eq:alpha cond gen}
	&\text{(I) }\alpha(p)= f(p) + O\left( \log^{-2} p\right),\\
	&\text{(II) }\sum_{k \ge 2} \frac{k\alpha(p^k)}{p^k} = O\left( \frac{1}{p\log^2 p} \right),
	\end{split}
	\end{equation}
	and define
	\begin{equation}
	F(s) = \sum_{n \ge 1} \frac{\alpha(n)}{n^s},
	\end{equation}
	the Dirichlet series of $\alpha$. For $\Re s >1$ we have 
	\begin{equation}
	F(s)=\varphi(s)  e^{G_f(s)}
	\end{equation}
	where $\varphi $ is differentiable, bounded and has bounded derivative on $ \Re s \ge 1$.
\end{lem}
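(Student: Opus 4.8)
The plan is to pass to Euler products and to show that, after dividing $F$ by $e^{G_f(s)}$, the remaining product converges absolutely and uniformly up to the line $\Re s=1$, together with its logarithmic derivative. Since $\alpha$ is multiplicative and (by (II) together with (I) and $f(p)=p^{o(1)}$, which follows from $\log f(p)=o(\log p)$) one has $\sum_n \alpha(n)/n^{\sigma}<\infty$ for every $\sigma>1$, we may write $F(s)=\prod_p F_p(s)$ on $\Re s>1$, where $F_p(s):=\sum_{k\ge 0}\alpha(p^k)p^{-ks}=1+\alpha(p)p^{-s}+E_p(s)$ with $E_p(s):=\sum_{k\ge2}\alpha(p^k)p^{-ks}$; by (II) each $F_p$ in fact converges absolutely on $\Re s\ge1$. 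Likewise $e^{G_f(s)}=\prod_p\exp(f(p)p^{-s})$, the product converging on $\Re s>1$ since $f(p)=p^{o(1)}$. It therefore suffices to set $\varphi(s):=\prod_p L_p(s)$, $L_p(s):=F_p(s)\exp(-f(p)p^{-s})$, prove the regularity of $\varphi$, and observe that multiplying the two absolutely convergent products on $\Re s>1$ gives $\varphi(s)e^{G_f(s)}=\prod_p F_p(s)=F(s)$.

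Next I would record the relevant bounds on $\Re s\ge1$. From $\log f(p)=o(\log p)$ and (I), $\alpha(p),f(p)=p^{o(1)}$, so $|\alpha(p)p^{-s}|\le\alpha(p)/p\to0$; from (II), $|E_p(s)|\le\sum_{k\ge2}\alpha(p^k)/p^k\le\sum_{k\ge2}k\alpha(p^k)/p^k=O(1/(p\log^2p))$; and from (I), $|\alpha(p)-f(p)|=O(\log^{-2}p)$. Fix $P_0$, depending only on $\alpha$ and $f$, so that $|u_p(s)|:=|\alpha(p)p^{-s}+E_p(s)|\le \tfrac12$ for all primes $p>P_0$ and all $\Re s\ge1$. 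For such $p$ the factor $L_p(s)$ is nonzero and $\log L_p(s)=\log(1+u_p(s))-f(p)p^{-s}=(\alpha(p)-f(p))p^{-s}+E_p(s)+O(|u_p(s)|^2)$; since $|u_p(s)|^2=O(\alpha(p)^2/p^2)=O(p^{-3/2})$ we obtain $\log L_p(s)=O(1/(p\log^2p))+O(p^{-3/2})$, so $\sum_{p>P_0}\log L_p(s)$ converges absolutely and uniformly on $\Re s\ge1$ (using $\sum_p 1/(p\log^2p)<\infty$ and $\sum_p p^{-3/2}<\infty$). Differentiating, $(\log L_p)'(s)=\frac{\log p}{1+u_p(s)}\big[(f(p)-\alpha(p))p^{-s}+f(p)p^{-s}u_p(s)-\sum_{k\ge2}k\alpha(p^k)p^{-ks}\big]$; by (I), (II) and $f(p)=p^{o(1)}$ the bracket is $O(1/(p\log^2p))$ (the middle term being $O(p^{-3/2})$), while $|1+u_p(s)|\ge\tfrac12$, whence $|(\log L_p)'(s)|=O(\log p/(p\log^2p))=O(1/(p\log p))$ and $\sum_{p>P_0}(\log L_p)'(s)$ converges absolutely and uniformly on $\Re s\ge1$ since $\sum_p 1/(p\log p)<\infty$.

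Given these two uniform bounds, $\varphi_2(s):=\exp\big(\sum_{p>P_0}\log L_p(s)\big)$ is holomorphic on $\Re s>1$ and extends continuously to $\Re s\ge1$; it is bounded there (by the exponential of the bounded sum) and has bounded derivative $\varphi_2'=\varphi_2\sum_{p>P_0}(\log L_p)'$. The remaining factor $\varphi_1(s):=\prod_{p\le P_0}L_p(s)$ is a product of finitely many functions, each bounded on $\Re s\ge1$ (as $|F_p(s)|\le\sum_{k}\alpha(p^k)/p^k<\infty$ by (II) and $|\exp(-f(p)p^{-s})|\le e^{f(p)/p}$) and each with bounded derivative on $\Re s\ge1$ (termwise differentiation of $F_p$ being licensed by $\sum_k k\alpha(p^k)/p^k<\infty$, again from (II)); hence $\varphi_1$ is bounded with bounded derivative. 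Thus $\varphi=\varphi_1\varphi_2$ is holomorphic on $\Re s>1$ and extends, together with its derivative, to bounded continuous functions on $\Re s\ge1$, which is the assertion.

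The one genuinely delicate point is the estimate for $(\log L_p)'$: differentiation produces a factor $\log p$, and a term of size $\log p/p$ would sum to infinity over primes. What saves the argument is that hypotheses (I) and (II) both carry an extra $\log^{-2}p$, so that after multiplying by $\log p$ one is left with the convergent series $\sum_p 1/(p\log p)$; the surplus $\log^{-1}p$ beyond mere convergence is precisely what buys differentiability up to the line $\Re s=1$. The remaining ingredients — the Euler factorisation, the uniform convergence of $\sum\log L_p$, and the separate treatment of the finitely many small primes — are routine.
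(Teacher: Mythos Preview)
Your proof is correct and follows essentially the same route as the paper: write $F(s)=e^{G_f(s)}\prod_p L_p(s)$ with $L_p(s)=F_p(s)\exp(-f(p)p^{-s})$, split off finitely many small primes, and control the tail product by showing $\sum_p \log L_p(s)$ and $\sum_p (\log L_p)'(s)$ converge absolutely and uniformly on $\Re s\ge 1$. The paper organises the same computation slightly differently---it sets $E_p(s):=L_p(s)-1$ and bounds $|E_p|$ and $|E_p'|$ directly rather than passing through the Taylor expansion of $\log(1+u_p)$---but the key estimates (the extra $\log^{-2}p$ in (I) and (II) absorbing the $\log p$ produced by differentiation, leaving the convergent $\sum_p 1/(p\log p)$) are identical.
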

\begin{proof}
	By multiplicativity of $\alpha$, we may write 
	\begin{equation}
	F(s)= \prod _{p} \left(  \sum _{k=0}^\infty \frac{\alpha (p^k)}{p^{ks}}  \right)= e^{G_f(s)} \prod _p \left(  1+E_p(s) \right),
	\end{equation}
	where 
	\begin{equation}
	E_p(s)=-1+ \exp \left[ \frac{ -f(p)}{p^s} \right] \cdot \sum _{k=0}^\infty \frac{\alpha (p^k)}{p^{ks}}.
	\end{equation}
	Using \eqref{eq:alpha cond gen} and the triangle inequality, we obtain
	\begin{equation}\label{eq:bound}
	\begin{split}
	\left| \exp \left[ \frac{ f(p)}{p^s} \right] - \sum _{k=0}^\infty \frac{\alpha (p^k)}{p^{ks}} \right| &\le  \left| \exp \left[ \frac{ f(p)}{p^s} \right] - \left( 1+\frac{f(p) }{p^s} \right) \right|+ \left| \frac{\alpha (p)}{p^s} - \frac{f(p)}{p^s} \right|+\sum _{k=2}^\infty \left| \frac{\alpha (p^k)}{p^{ks}} \right|\\
	&\le \frac{C f^2(p) }{p^2}+ \frac{C}{p \log ^2 p}+ \sum _{k=2}^{\infty} \frac{\alpha (p^k)}{p^k} \le \frac{C}{p \log ^2 p}
	\end{split}
	\end{equation}
	for $\Re s \ge 1$. 	Thus, $|E_p(s)| \le C / (p \log ^2 p)$ for $\Re s \ge 1$. 	We turn to bound the derivative of $E_p$. We have 
	\begin{equation}
	E_p '(s)=\log p \cdot \exp \left[ -\frac{f(p)}{p^s} \right] \left( \frac{f(p)}{p^s}  \sum _{k=0}^\infty \frac{\alpha (p^k)}{p^{ks}}-   \sum _{k=1}^\infty \frac{k\alpha (p^k)}{p^{ks}}  \right)
	\end{equation}
	and therefore 
	\begin{equation}
	\left| E_p' (s) \right| \le C \log p \left(   \left| \frac{f(p)}{p^s}-\frac{\alpha (p)}{p^s} \right| +\frac{f(p)}{p} \sum _{k=1}^{\infty} \frac{\alpha (p^k)}{p^k} +\sum _{k=2}^{\infty} \frac{k \alpha (p^k)}{p^k} \right) \le \frac{C}{p \log p}.
	\end{equation}
	Let $p_0>0$ so that for $p\ge p_0$ we have $|E_p(s)|\le 1/2$ for any $s$ with $\Re s\ge 1$. We have that
	\begin{equation}
	F(s)=e^{G_f(s)} \cdot \psi _1 (s) \cdot \psi _2 (s),
	\end{equation}
	where 
	\begin{equation}
	\psi_1 (s):= \prod _{p<p_0} \left( 1+E_p(s) \right), \quad \psi_2 (s):= \prod _{p\ge p_0} \left( 1+E_p(s) \right)=\exp \left[ \sum _{p\ge p_0} \log \left( 1+ E_p (s)\right) \right]
	\end{equation}
	and $\log z$ is the principal branch of the logarithm. The function $\psi _1$ is trivially differentiable, bounded and has bounded derivative on $\Re s \ge 1$. As for $\psi_2$, observe that
	\begin{equation}
	\left| \log (\psi _2 (s)) \right| = | \sum _{p\ge p_0} \log \left( 1+ E_p (s)\right) | \le C\sum _{p\ge p_0} |E_p(s)|\le \sum _{p\ge p_0} \frac{C}{p \log ^2 p } <\infty 
	\end{equation}
	and 
	\begin{equation}
	\left| \frac{d}{ds} \log (\psi _2(s))\right| \le \sum _{p\ge p_0} \left| \frac{E_p'(s) }{1+E_p(s)} \right| \le  C \sum _{p\ge p_0} \left|E_p'(s) \right| \le \sum _{p\ge p_0} \frac{C}{p \log p } <\infty.
	\end{equation}
	It follows that $\psi _2$ is also differentiable, bounded and has bounded derivative on $\Re s \ge 1$. Thus, the same is true for $\varphi :=\psi _1 \cdot \psi _2$, as needed.
\end{proof}

\begin{lem}\label{lem:omega bound}
	Let $\alpha\colon \NN \to \RR_{\ge 0}$ be a multiplicative function satisfying \eqref{eq:polycond2} and
	\begin{equation}
	S(y/h) h \le C \cdot S(y)
	\end{equation}
	for all $y >0$, $h \ge 2$. Then $|\EE \Omega(N_{x}) - \EE\omega(N_x)| \le C$.
\end{lem}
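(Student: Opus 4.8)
The plan is to start from the pointwise identity $\Omega(n)-\omega(n)=\sum_{p}\sum_{k\ge 2}\mathds{1}_{p^k\mid n}$, which holds because a maximal prime power $p^a\| n$ (with $a\ge 1$) contributes $a$ to $\Omega(n)$, contributes $1$ to $\omega(n)$, and supplies exactly $a-1$ nonzero terms to the double sum. Taking expectations with respect to $\PP_{x,\alpha}$ and using that $\Omega\ge\omega$ pointwise, so that the difference is non-negative, it suffices to bound
\[
\EE[\Omega(N_x)-\omega(N_x)]=\frac{1}{S(x)}\sum_{p}\sum_{k\ge 2}\sum_{\substack{n\le x\\ p^k\mid n}}\alpha(n)
\]
from above by a constant depending only on $\alpha$.

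Next I would estimate the inner sum by multiplicativity. Writing $n=p^j m$ with $p\nmid m$ and $j\ge k$, we have $\alpha(n)=\alpha(p^j)\alpha(m)$, and dropping the coprimality condition on $m$ (legitimate since $\alpha\ge 0$) gives
\[
\sum_{\substack{n\le x\\ p^k\mid n}}\alpha(n)\le\sum_{j\ge k}\alpha(p^j)\,S(x/p^j).
\]
Here $S(x/p^j)$ is an empty sum, hence $0$, once $p^j>x$, so the hypothesis $S(y/h)h\le C\,S(y)$ applies with $y=x$ and $h=p^j\ge 2$ in every case and yields $S(x/p^j)\le C\,S(x)/p^j$. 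Substituting back and cancelling $S(x)$,
\[
\EE[\Omega(N_x)-\omega(N_x)]\le C\sum_{p}\sum_{k\ge 2}\sum_{j\ge k}\frac{\alpha(p^j)}{p^j}.
\]

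Finally I would swap the order of the $k$- and $j$-summations: for fixed $j\ge 2$ the index $k$ ranges over $2,\dots,j$, so the triple sum equals $\sum_{p}\sum_{j\ge 2}(j-1)\alpha(p^j)/p^j\le\sum_{p}\sum_{j\ge 2}j\,\alpha(p^j)/p^j$, which by \eqref{eq:polycond2} is $\sum_{p}O\bigl(1/(p\log^2 p)\bigr)=O(1)$ since $\sum_p 1/(p\log^2 p)$ converges. This gives $|\EE\Omega(N_x)-\EE\omega(N_x)|=\EE[\Omega(N_x)-\omega(N_x)]\le C$. I do not expect a genuine obstacle: the only points needing a moment's care are the degenerate ranges $p^j>x$ when invoking the growth hypothesis, and keeping the bookkeeping of the exchanged summation indices straight so that the weight $j-1$ is correctly absorbed into the $j$ appearing in \eqref{eq:polycond2}.
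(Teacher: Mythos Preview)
Your proof is correct and follows essentially the same approach as the paper: both start from $\Omega-\omega=\sum_{p}\sum_{k\ge 2}\mathds{1}_{p^k\mid n}$, bound the inner sum by $\sum_{j\ge k}\alpha(p^j)S(x/p^j)$ via multiplicativity, apply the hypothesis $S(y/h)h\le C\,S(y)$, and then swap the $k$- and $j$-sums to recover the factor $j$ needed to invoke \eqref{eq:polycond2}. The paper's index $i$ is your $j-k$, and its final chain of inequalities is exactly your computation.
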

\begin{proof}
	Writing $\omega(N_x)$ as $\sum_{p \le x}  \mathds{1}_{p \mid N_x}$ and $\Omega(N_x)$ as $\sum_{p \le x} \sum_{k \ge 1} \mathds{1}_{p^k \mid N_x}$, we have
	\begin{equation}
	\begin{split}
	0 \le \EE (\Omega(N_{x}) - \omega(N_x))&= \sum_{\substack{k \ge 2\\p} } \PP( p^k \mid N_x) \le \sum_{\substack{k \ge 2\\p} } \sum_{i \ge 0}\alpha(p^{k+i}) \frac{S\big( x/p^{k+i} \big)}{S(x)}\\
	& \le C\sum_{\substack{k \ge 2\\p} } \sum_{i \ge 0} \frac{\alpha(p^{k+i})}{p^{k+i}} \le C\sum_{p} \sum_{ k \ge 2} \frac{k\alpha(p^k)}{p^k} \le C \sum_{p} \frac{1}{p\log^2p} \le C,
	\end{split}
	\end{equation}
	as needed.
\end{proof}
Fix $\gamma>0$ and define the Dirichlet series
\begin{equation}\label{eq:def of G}
G(s)=\sum _{p} \frac{\log ^\gamma  p }{p^s}, \quad \Re s >1.
\end{equation}
\begin{lem}\label{lem:derivatives of G next to 1}
	Fix a non-negative integer $k$. We have 
	\begin{equation}\label{eq:Gderivres}
	G^{(k)}(s)= (-1)^k\frac{\Gamma (\gamma +k ) }{ (s-1)^{\gamma+k }} +B_k + O (|s-1| |s|)
	\end{equation}
	for $ \Re s >1$, where $B_k$ is a real constant depending on $\gamma$ and $k$. Here $(s-1)^{\gamma}=\exp(\gamma \log(s-1))$ is defined using the principal branch of the logarithm.
\end{lem}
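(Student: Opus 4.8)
The plan is to reduce the statement, by termwise differentiation, to a local expansion near $s=1$ of a Dirichlet series with fractional logarithmic weights, and then to isolate the singular term by comparison with the Gamma integral. Since $G(s)=\sum_p (\log p)^{\gamma}p^{-s}$ converges absolutely and locally uniformly for $\Re s>1$, we may differentiate term by term to get $G^{(k)}(s)=(-1)^k\sum_p (\log p)^{\gamma+k}p^{-s}$. Writing $\beta:=\gamma+k>0$, it then suffices to prove
\[
\sum_p \frac{(\log p)^{\beta}}{p^s}=\frac{\Gamma(\beta)}{(s-1)^{\beta}}+B+O(|s-1|\,|s|),\qquad \Re s>1,
\]
for a real constant $B=B(\beta)$, and to set $B_k=(-1)^kB(\gamma+k)$. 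Note that $\beta$ need not be an integer, so one cannot simply differentiate $-\zeta'/\zeta$ an integer number of times; I would instead argue directly from the distribution of primes.

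Put $\vartheta(t)=\sum_{p\le t}\log p$, so that $\sum_p (\log p)^{\beta}p^{-s}=\int_{2^-}^{\infty}(\log t)^{\beta-1}t^{-s}\,d\vartheta(t)$, and integrate by parts; the boundary contributions vanish (at $\infty$ since $\vartheta(t)\ll t$ and $\Re s>1$, at $2^-$ since $\vartheta(2^-)=0$), giving
\[
\sum_p \frac{(\log p)^{\beta}}{p^s}=\int_2^{\infty}\vartheta(t)\,\frac{s(\log t)^{\beta-1}-(\beta-1)(\log t)^{\beta-2}}{t^{s+1}}\,dt .
\]
Now insert the prime number theorem in the form \eqref{eq:pnt}, writing $\vartheta(t)=t+R(t)$ with $R(t)\ll t\exp(-a\sqrt{\log t})$, and split the integral into a main part $M(s)$ (coming from $t$) and an error part $\mathcal E(s)$ (coming from $R(t)$). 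Because $\exp(-a\sqrt{\log t})$ decays faster than every power of $\log t$, the integral defining $\mathcal E(s)$, as well as the one obtained by differentiating it in $s$, converges absolutely on all of $\Re s\ge 1$; hence $\mathcal E$ is analytic on $\Re s>1$, satisfies $\mathcal E(s)=\mathcal E(1)+O(|s-1|)$ for $|s-1|\le1$, and obeys the crude bound $\mathcal E(s)=O(|s|)$ on $\Re s\ge1$. These two estimates combine to $\mathcal E(s)=\mathcal E(1)+O(|s-1|\,|s|)$ throughout $\Re s>1$.

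For the main part, substitute $u=\log t$ and use the elementary identity $(su^{\beta-1}-(\beta-1)u^{\beta-2})e^{-u(s-1)}=-\frac{d}{du}(u^{\beta-1}e^{-u(s-1)})+u^{\beta-1}e^{-u(s-1)}$ to obtain, valid for every $\beta>0$,
\[
M(s)=(\log 2)^{\beta-1}e^{-(s-1)\log 2}+\int_{\log 2}^{\infty}u^{\beta-1}e^{-u(s-1)}\,du .
\]
Write $\int_{\log 2}^{\infty}=\int_0^{\infty}-\int_0^{\log 2}$: the full integral equals $\Gamma(\beta)(s-1)^{-\beta}$ for $\Re(s-1)>0$, with the natural branch of the logarithm as in the statement, while expanding $e^{-u(s-1)}$ shows that $\int_0^{\log 2}u^{\beta-1}e^{-u(s-1)}\,du$ and $(\log 2)^{\beta-1}e^{-(s-1)\log 2}$ each equal a constant plus $O(|s-1|\,|s|)$ on $\Re s>1$. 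Collecting terms gives $M(s)=\Gamma(\beta)(s-1)^{-\beta}+((\log 2)^{\beta-1}-(\log 2)^{\beta}/\beta)+O(|s-1|\,|s|)$, and adding $\mathcal E(s)$ yields the claimed expansion with $B=(\log 2)^{\beta-1}-(\log 2)^{\beta}/\beta+\mathcal E(1)$.

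Nothing here is deep; the two points that require a little care are keeping the error term uniform over the whole half-plane in the precise shape $O(|s-1|\,|s|)$ — which is why one patches the local expansion near $s=1$ together with the crude global bound $O(|s|)$ — and making the partial summation and change of variables legitimate for non-integer $\beta$, which the rewriting of $M(s)$ above handles uniformly for all $\beta>0$.
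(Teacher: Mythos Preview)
Your proof is correct and follows essentially the same approach as the paper's: both reduce to the exponent $\beta=\gamma+k$, apply Abel summation against $\vartheta(t)$, split $\vartheta(t)=t+R(t)$ into a main part and an error part handled via the prime number theorem, and extract the singular term $\Gamma(\beta)(s-1)^{-\beta}$ from the main part by comparing with the Gamma integral. The only cosmetic differences are that you perform the change of variables $u=\log t$ before the second integration by parts (your displayed identity) whereas the paper does integration by parts in $t$ first and then substitutes $w=(s-1)\log t$ together with a contour deformation, and you are somewhat more explicit than the paper about why the error has the precise shape $O(|s-1||s|)$ uniformly on $\Re s>1$.
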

\begin{proof}
	We start with the case $k=0$. Let $E(t)= (\sum _{p\le t} \log p)  -t$ be the error term in the prime number theorem. Using integration by parts we obtain, for $\Re s > 1$, that
	\begin{equation}\label{eq:psi 1 psi 2}
	\begin{split}
	\sum _p \frac{\log ^{\gamma } p}{p^s}&= \sum _{n=2}^\infty  \mathds{1} _{\{n \mbox{ is prime}\} } \log n \cdot  \frac{\log ^{\gamma -1} n}{n^s}=-\intop _2 ^{\infty } \bigg( \sum _{p\le t} \log p \bigg) \left(\frac{\log ^{\gamma -1} t}{t^s}\right)'\, \mathrm{d}t \\
	&=\intop _2 ^ \infty \left(t+E(t) \right) \frac{s \log ^{\gamma -1}t-(\gamma -1 ) \log ^{\gamma -2 }t}{t^{s+1}} \, \mathrm{d}t= \psi _1 (s)+\psi _2(s),
	\end{split}
	\end{equation}
	where 
	\begin{equation}
	\psi _1(s):=\intop _2 ^ \infty E(t) \frac{s \log ^{\gamma -1}t-(\gamma -1 ) \log ^{\gamma -2 }t}{t^{s+1}} \, \mathrm{d}t, \quad \psi _2(s):=\intop _2 ^ \infty  \frac{s \log ^{\gamma -1}t-(\gamma -1 ) \log ^{\gamma -2 }t}{t^{s}} \, \mathrm{d}t.
	\end{equation}
	When $\Re s \ge 1$ we may bound $\psi_1$ as follows, using the known result that $E(t)\ll t e^{-c\sqrt{\log t}}$:
\[	\left| \psi _1 (s) \right| \le C|s|\intop _2^\infty \left| e^{-c\sqrt{\log t}}  \frac{\log ^{\gamma -1 }t}{t^s} \right|\, \mathrm{d}t \le C|s|\intop _2^\infty e^{-c\sqrt{\log t}}  \frac{\log ^{\gamma -1}t}{t} \, \mathrm{d}t	\le C|s| \intop _0^\infty e^{-c \sqrt{y}}  y^{\gamma -1 } \, \mathrm{d}y\le C|s|,\]
	and similarly $\psi_1'$ may be bounded by 
	\begin{equation}
	\left| \psi _1 '(s) \right| \le C|s|\intop _2^\infty \left| e^{-c\sqrt{\log t}}  \frac{\log ^{\gamma  }t}{t^s} \right|\, \mathrm{d}t \le C|s| \intop _0^\infty e^{-c \sqrt{y}} y^{\gamma }  \, \mathrm{d}y\le C|s|,
	\end{equation}
	and so
	\begin{equation}\label{eq:psi 1}
	\psi _1(s) = \psi _1(1)+O(|s-1||s|)
	\end{equation}
	in $\Re s \ge 1$. We turn to estimate $\psi _2$. Using integration by parts we obtain
	\begin{multline}
	\intop _2^\infty \frac{\log ^{\gamma -1} t}{t^s}\, \mathrm{d}t=\frac{\log ^{\gamma -1}t}{(1-s)t^{s-1}} \ \bigg| _{t=2}^{t=\infty} - \int_{2}^{\infty} \frac{(\gamma-1) \log^{\gamma-2} t}{(1-s)t^s} \,\mathrm{d}t= \frac{\log ^{\gamma -1}2}{(s-1)2^{s-1}} \\
	 +\frac{1}{s-1}\left(\int_{2}^{\infty} \frac{s \log^{\gamma-1} t}{t^s} \,\mathrm{d}t - \psi_2(s)\right)
	\end{multline}
	for $\Re s > 1$, so that
	\begin{equation}
	\psi_2(s) = \intop _2^\infty \frac{\log ^{\gamma -1} t}{t^s}\, \mathrm{d}t+ \frac{\log^{\gamma-1}2}{2^{s-1}}.
	\end{equation}
	Setting 
	\begin{equation}
	\psi _3 (s):= \intop _1^2 \frac{\log ^{\gamma -1} t}{t^s}\, \mathrm{d}t
	\end{equation}
	we obtain
	\begin{equation}\label{eq:psi 2}
	\begin{split}
	\psi _2 (s)&= \intop _1^ \infty \frac{\log ^{\gamma -1} t}{t^s}\, \mathrm{d}t -\psi _3(s) +\log ^{\gamma -1}2 +O(|s-1|)\\
	&= \intop _1^ \infty \frac{\log ^{\gamma -1} t}{t^s}\, \mathrm{d}t -\psi _3 (1)+\log ^{\gamma -1}2 +O(|s-1|),
	\end{split}
	\end{equation}
	where in the last equality we used that $|\psi _3(s)|,|\psi _3'(s)|\le C$ for $\Re s \ge 1$. In order to compute the integral in the right-hand side of \eqref{eq:psi 2} we perform the change of variables $w=(s-1)\log t$, obtaining
	\begin{equation}\label{eq:change of variables twice2}
	\intop _1^\infty \frac{\log ^{\gamma -1} t}{t^s}\, \mathrm{d}t= \frac{1}{(s-1)^{\gamma} } \intop _0^{(s-1)\infty} w^{\gamma -1} e^{-w}\, \mathrm{d}w.
	\end{equation}
	Letting $C_R$ be the circular contour from $R$ to $R \frac{s-1}{|s-1|}$, we compute that $\lim_{R \to \infty}\int_{C_R} w^{\gamma-1} e^{-w}\mathrm{d}w =0$ when $\Re s > 1$. Thus, we may deform the contour $\{ (s-1)  w: w \ge 0\}$ in \eqref{eq:change of variables twice2} to the positive real line, obtaining 
	\begin{equation}\label{eq:change of variables twice}
	\intop _1^\infty \frac{\log ^{\gamma -1} t}{t^s}\, \mathrm{d}t=\frac{1}{(s-1)^{\gamma} } \intop _0^ \infty w^{\gamma -1} e^{-w}\, \mathrm{d}w=\frac{\Gamma (\gamma  ) }{(s-1)^{\gamma } }.
	\end{equation}	
	Substituting \eqref{eq:change of variables twice} into \eqref{eq:psi 2} and then substituting \eqref{eq:psi 1} and \eqref{eq:psi 2} into \eqref{eq:psi 1 psi 2}, we obtain \eqref{eq:Gderivres} with $k=0$ and 
	\begin{equation}
	B_0= \log ^{\gamma -1}2+\psi _1 (1)-\psi _3 (1).
	\end{equation}
	Next we consider the case $k \ge 1$. We have
	\begin{equation}
	G^{(k)}(s)=(-1)^k \sum _p \frac{ \log ^{\gamma +k } p}{p^s}.
	\end{equation}
	Hence, repeating the above arguments with $\gamma $ replaced by $\gamma +k$ gives the desired result for any $k\ge 1$.
\end{proof}
The following lemma bounds $\Re \left(G (\sigma +it) \right)$ when $t$ is not too large.
\begin{lem}\label{lem:bound for large t}
	There exists $c >0$ with the following property. For $\sigma>1$ sufficiently close to $1$, and for any $t\in \RR $ with $1 \le |t|\le e^{1/(\sigma-1)}$ we have 
	\begin{equation}
	\Re \left(G (\sigma +it) \right) \le (1-c ) \cdot  G(\sigma ).
	\end{equation} 
\end{lem}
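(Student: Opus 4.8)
The plan is to pass to real parts and reduce to a lower bound on an explicit non-negative sum. Writing $\varepsilon=\sigma-1$ (so $\varepsilon\to0^{+}$) and using that cosine is even,
\begin{equation*}
G(\sigma)-\Re G(\sigma+it)=\sum_{p}\frac{\log^{\gamma}p}{p^{\sigma}}\bigl(1-\cos(t\log p)\bigr)=:S\ge 0.
\end{equation*}
Since $G(\sigma)=(1+o(1))\Gamma(\gamma)\,\varepsilon^{-\gamma}$ by the case $k=0$ of Lemma~\ref{lem:derivatives of G next to 1}, the assertion is equivalent to a lower bound $S\gg_{\gamma}\varepsilon^{-\gamma}$. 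Every term of $S$ is non-negative and $1-\cos\vartheta\ge1$ whenever $\cos\vartheta\le0$, so, writing $\mu$ for the measure placing mass $\log^{\gamma}p/p^{\sigma}$ at each prime $p$, we have $S\ge\mu\bigl(\{p\in W:\cos(t\log p)\le0\}\bigr)$ for every set of primes $W$. It therefore suffices to exhibit a window $W$ with $\mu(W)\gg\varepsilon^{-\gamma}$ and $\mu(\{p\in W:\cos(t\log p)\le0\})\gg\mu(W)$.

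Let $\kappa<1$ be an exponent for which Theorem~\ref{thm:short intervals} yields $\sum_{x<p\le x+h}\log p=h(1+o(1))$ uniformly for $x^{\kappa}\le h\le x$, fix a constant $C_{0}>1/(1-\kappa)$, and take $W:=\{p:\ C_{0}/\varepsilon\le\log p\le 2C_{0}/\varepsilon\}$. A partial summation against the prime number theorem gives $\mu(W)=(1+o(1))\,\varepsilon^{-\gamma}\int_{C_{0}}^{2C_{0}}w^{\gamma-1}e^{-w}\,dw\gg_{\gamma,C_{0}}\varepsilon^{-\gamma}$. For the second estimate, partition the interval $[C_{0}/\varepsilon,2C_{0}/\varepsilon]$ in the variable $v=\log p$ into consecutive blocks of length $2\pi/|t|$ — one full period of $v\mapsto\cos(tv)$ — so that in each block $B$ the set $B^{-}:=\{v\in B:\cos(tv)\le0\}$ is an interval of length $\pi/|t|$. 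In the variable $p$, such a block is a range $[y,y\,e^{2\pi/|t|}]$ with $y\ge e^{C_{0}/\varepsilon}$, of length $h\asymp y/|t|$; this is where the hypothesis $|t|\le e^{1/\varepsilon}$ enters, since $y\ge e^{C_{0}/\varepsilon}$ and $|t|\le e^{1/\varepsilon}$ force $h\ge y^{\,1-1/C_{0}-o(1)}\ge y^{\kappa}$ for $\varepsilon$ small, so Theorem~\ref{thm:short intervals} applies on $B$ and on the $p$-range corresponding to $B^{-}$ and shows each carries an amount of $\sum\log p$ proportional to its length. As $|t|\ge1$ keeps the $v$-length of a block at most $2\pi$, the factor $\log^{\gamma}p/p^{\sigma}$ varies across a single block by at most an absolute constant, and combining these two facts gives $\mu(\{p:\log p\in B^{-}\})\ge c\,\mu(\{p:\log p\in B\})$ with $c>0$ absolute, for every block. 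Summing over the $\asymp C_{0}|t|/(2\pi\varepsilon)$ blocks — the two incomplete end-blocks contribute only $O_{\gamma,C_{0}}(\varepsilon^{1-\gamma})=o(\mu(W))$ — yields $\mu(\{p\in W:\cos(t\log p)\le0\})\gg\mu(W)\gg\varepsilon^{-\gamma}$, which completes the proof.

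The only genuinely delicate point is the interplay between the size of $|t|$ and the placement of $W$. One cannot take $W$ at a bounded scale, as it would then carry a vanishing fraction of $G(\sigma)\asymp\varepsilon^{-\gamma}$, so $W$ must sit at scale $\log p\asymp1/\varepsilon$; but the oscillation $\cos(t\log p)$ resolves the primes of $W$ only on $p$-intervals of relative length $\asymp1/|t|$, hence absolute length $\asymp y/|t|$ with $y\asymp e^{C_{0}/\varepsilon}$, and for such intervals still to be long enough for a short-interval prime estimate (length $\ge y^{\kappa}$) one needs $|t|\lesssim e^{(1-\kappa)C_{0}/\varepsilon}$ — the bound $|t|\le e^{1/\varepsilon}$ being accommodated by choosing $C_{0}$ large. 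Everything else — the partial summation for $\mu(W)$, the uniformity of the $o(1)$'s from Theorem~\ref{thm:short intervals} across the $\asymp|t|/\varepsilon$ blocks, and the bounded-factor comparisons within a single block — is routine.
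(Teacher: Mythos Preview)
Your argument is correct and is essentially the paper's own proof: both restrict to the window $\log p\asymp 1/(\sigma-1)$ (the paper takes $[10/(\sigma-1),20/(\sigma-1)]$, matching your requirement $C_{0}>1/(1-\kappa)$ with $\kappa=3/4$), partition by periods of $\cos(t\log p)$, and invoke Theorem~\ref{thm:short intervals} to lower-bound the prime mass where $\cos(t\log p)\le 0$. The only minor difference is that the paper uses just the lower-bound form of Theorem~\ref{thm:short intervals} and compares $\sum_{n}\mu(A_{n})$ directly to $(\sigma-1)^{-\gamma}\asymp G(\sigma)$, whereas your block comparison $\mu(B^{-})\ge c\,\mu(B)$ implicitly also needs a short-interval upper bound (or the asymptotic form you quote)---this is of course available, but is slightly more than what the paper actually states and uses.
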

To prove Lemma~\ref{lem:bound for large t} we use bounds on primes in short intervals. Information of the form we need is already found in a work of Hoheisel \cite{hoheisel1930}. For ease of presentation, we use a stronger result.
\begin{thm}\cite{heathbrown1988}\label{thm:short intervals}
For any sufficiently large $m$, and any $n\ge m$ with $n-m \ge n^{3/4}$, we have
	\begin{equation}\label{eq:lower bound primes}
	\left| \{ p: p\in [m,n] \text{ prime} \} \right| \ge \frac{n-m}{2 \log n}.
	\end{equation} 
\end{thm}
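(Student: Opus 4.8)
The plan is to deduce the stated prime count from the asymptotic $\sum_{m\le p\le n}\log p\sim n-m$, which in the relevant ranges is either the prime number theorem with error term or a known theorem on primes in short intervals. Throughout I would write $\vartheta(y)=\sum_{p\le y}\log p$ and $\psi(y)=\sum_{p^k\le y}\log p$ for the Chebyshev functions and put $h=n-m$; note that the hypothesis $h\ge n^{3/4}$ forces $h\to\infty$ with $n$. First I would split according to whether $n\ge 2m$ or $n<2m$.

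If $n\ge 2m$, then $h=n-m\ge n/2$, and de la Vall\'ee Poussin's form of the prime number theorem, $\vartheta(y)=y+O(y\exp(-c\sqrt{\log y}))$, gives $\vartheta(n)-\vartheta(m)=h+O(n\exp(-c\sqrt{\log n}))=h(1+o(1))$. If instead $n<2m$, then $m\le n<2m$, so $m^{3/4}\le n^{3/4}\le h<m$, and we are squarely in the short-interval regime. There I would invoke the estimate $\psi(x+k)-\psi(x)=k(1+o(1))$, valid uniformly for $x^{3/4}\le k\le x$ by the work of Heath-Brown \cite{heathbrown1988} (the exponent $3/4$ being comfortably above the critical value $7/12$ of Huxley's theorem), applied with $x=m$ and $k=h$, to get $\psi(n)-\psi(m)=h(1+o(1))$. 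Since $\psi(y)-\vartheta(y)=\sum_{k\ge 2}\vartheta(y^{1/k})=O(\sqrt y\log y)$ and $\sqrt n\log n=o(n^{3/4})=o(h)$, this yields $\vartheta(n)-\vartheta(m)=\psi(n)-\psi(m)+O(\sqrt n\log n)=h(1+o(1))$. So in both cases $\vartheta(n)-\vartheta(m)=(n-m)(1+o(1))$, with the $o(1)$ bounded by a function of $n$ alone that tends to $0$.

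To conclude I would use
\[
|\{\,p:p\in[m,n]\text{ prime}\,\}|=\sum_{m\le p\le n}1\ \ge\ \frac{1}{\log n}\sum_{m\le p\le n}\log p\ \ge\ \frac{\vartheta(n)-\vartheta(m)}{\log n}\ =\ \frac{(n-m)(1+o(1))}{\log n},
\]
whose right-hand side exceeds $(n-m)/(2\log n)$ once $n$ is large; since $n\ge m$, it suffices that $m$ be large, which is exactly the hypothesis. The factor $1/2$, in place of the true constant $1+o(1)$, leaves plenty of slack, so no delicate estimate is needed in this last step.

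The hard part is external: the short-interval estimate $\psi(x+k)-\psi(x)\sim k$ for $k\ge x^{3/4}$, which I would not reprove but take from \cite{heathbrown1988}. For context, it follows from the truncated explicit formula $\psi(x)=x-\sum_{|\gamma|\le T}x^{\rho}/\rho+O(x\log^2 x/T)$ together with a zero-free region and a zero-density estimate for $\zeta$ (Huxley's bound $N(\sigma,T)\ll T^{(12/5)(1-\sigma)+\varepsilon}$ is more than enough; since our exponent $3/4$ far exceeds the critical $7/12$, even older density estimates suffice), the critical exponent $7/12$ emerging from balancing these bounds against the choice of $T$. Thus the genuine difficulty sits entirely in that cited input, and the argument above is a routine reduction.
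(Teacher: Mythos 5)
The paper gives no proof of this statement---it is quoted as a known consequence of Heath-Brown's short-interval prime number theorem---and your write-up is exactly the standard deduction from that cited input (short-interval asymptotics for $\psi$, conversion to $\vartheta$, and the trivial bound $\log p\le\log n$), so it matches the paper's intent. The case split, the uniformity of the $o(1)$, and the final comparison with the factor $1/2$ all check out, so the argument is correct.
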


\begin{proof}[Proof of Lemma~\ref{lem:bound for large t}]
	Let $t \in \RR$ with $1 \le |t| \le e^{1/(\sigma-1)}$. As $ \Re \left(G (\sigma +it) \right)$ is an even function of $t$, we may assume that $t > 0$. Consider the set of integers
	\begin{equation}
	M:=\left\{ n\ge 0 :  \frac{10}{\sigma -1 } \le \frac{2\pi }{t} \left(n +\frac{3}{4}\right) \le \frac{ 20}{\sigma -1 } \right\}.
	\end{equation} 
	Clearly, for $\sigma$ close enough to $1$ we have $|M|\ge t /(\sigma -1 )$. For any $n\in M$ we define
	\begin{equation}
	A_n:=\left\{ p : t\log p\in \left[ 2\pi n+\frac{\pi }{2}, 2\pi n +\frac{3\pi}{2}   \right]\right\}=\left\{p: p\in \left[ e^{-\frac{\pi }{t}}x_n, x_n \right]\right\},
	\end{equation}
	where
	\begin{equation}
	x_n:=\exp \left(\frac{2\pi }{t} \left(n +\frac{3}{4}\right)\right).
	\end{equation}
	For any $n \in M$ and $p\in A_n$ we have that $\cos (t \log p ) \le 0$ and therefore 
	\begin{equation}\label{eq:Re g upper}
	\Re \left(G(\sigma +it) \right) =\sum _p \frac{\log ^{\gamma }p}{p^\sigma } \cos (t \log p ) \le G(\sigma )-\sum _{n\in M} \sum _{p \in A_n} \frac{\log ^{\gamma }p}{p^{\sigma }}.
	\end{equation}
	Since $\log ^{\gamma }p/ p^{\sigma }$ is decreasing for sufficiently large $p$ (independently of $\sigma \ge 1$) and as $\min _{n\in M} \min A_n \to \infty $ as $\sigma \to 1^+$ (uniformly in $t \ge 1$) we get that, when $\sigma$ is close enough to $1$,
	\begin{equation}\label{eq:suman}
	\sum _{p\in A_n} \frac{\log ^{\gamma }p }{p^\sigma } \ge |A_n| \frac{\log ^{\gamma }x_n}{x_n^{\sigma }} \ge \frac{\left(1-e^{-\frac{\pi }{t}}\right)x_n }{2 \log x_n} \frac{\log ^{\gamma }x_n}{x_n^{\sigma }} \ge c \frac{\log ^{\gamma -1}x_n}{t \cdot  x_n^{\sigma -1 }} \ge \frac{c}{t} \left(\frac{1}{\sigma -1 } \right)^{\gamma -1}
	\end{equation}
	where in the second inequality we used \eqref{eq:lower bound primes}. Indeed the conditions of Theorem~\ref{thm:short intervals} hold for $[e^{-\pi/t}x_n,x_n]$ as for any $n\in M$ we have 
	\begin{equation}
	x_n \ge \exp \left( \frac{10 }{\sigma -1 }\right)\ge t^{10}	\end{equation}
	and so
	\begin{equation}
	x_n -e^{-\frac{\pi }{t}}x_n = x_n \left(1- e^{-\frac{\pi }{t}}  \right) \ge \frac{c}{t}x_n \ge c x_n^{0.9 }.
	\end{equation}
	Summing \eqref{eq:suman} over $n \in M$ we get
	\begin{equation}\label{eq:sum an n}
	\sum _{n\in M} \sum _{p\in A_n} \frac{\log ^{\gamma }p}{p^{\sigma }} \ge \frac{c|M|}{t} \left( \frac{1}{\sigma -1 }\right)^{\gamma -1} \ge c \left(\frac{1}{\sigma -1 } \right) ^{ \gamma } \ge c \cdot G(\sigma ),
	\end{equation}
	where in the last inequality we used Lemma~\ref{lem:derivatives of G next to 1} with $k=0$. From \eqref{eq:Re g upper} and \eqref{eq:sum an n} we obtain the desired bound.
\end{proof}

It turns out that when $|t|\ge e^{1/(\sigma -1)}$, the result of Lemma~\ref{lem:bound for large t} does not hold and $\Re \left(G (\sigma +it) \right)$ might be as large as $G(\sigma )$. We shall show that the reals  $t\in \RR $ for which $\Re \left(G (\sigma +it) \right)$ is as large as $G(\sigma )$ are quite rare.
More precisely, we will show in the following lemma that if $t_1$, $t_2$ are such that $\Re \left(G (\sigma +it) \right)$ is large then the same holds for $t_1-t_2$, and therefore by Lemma~\ref{lem:bound for large t} $t_1$ and $t_2$ must be far away from each other.
\begin{lem}\label{lem:t_1 t_2}
	Let $\{a_n\}_{n \ge 1}$ be a sequence of non-negative reals with $\sum _{n=1}^{\infty } a_n<\infty$. Consider the function 
	\begin{equation}
	f(t)=\sum _{n=1 }^{\infty} a_n \cos (t \log n), \quad t\in \RR.
	\end{equation}
	For any $0<\varepsilon <1$ and any $t_1,t_2\in \RR $ with 
	\begin{equation}
	f(t_1)\ge (1-\varepsilon )f(0), \quad f(t_2)\ge (1-\varepsilon )f(0)
	\end{equation}
	we have 
	\begin{equation}
	f(t_1-t_2)\ge (1-8 \sqrt{\varepsilon })f(0).
	\end{equation}
\end{lem}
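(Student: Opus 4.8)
The plan is to rewrite everything in terms of the unimodular numbers $n^{it}=e^{it\log n}$ and to use the elementary identity $|z-1|^2=2(1-\Re z)$ valid for $|z|=1$, together with the triangle inequality; nothing deeper is needed.

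We may assume $f(0)=\sum_{n\ge 1}a_n>0$, since otherwise all $a_n$ vanish and the inequality is trivial. For real $t$ we write $f(t)=\Re\sum_{n\ge 1}a_n n^{it}$; as $\sum_n a_n<\infty$ and $|n^{it}|=1$, every series appearing below converges absolutely and may be rearranged freely. The hypothesis $f(t_j)\ge(1-\varepsilon)f(0)$ is equivalent to $\sum_{n\ge 1}a_n\bigl(1-\Re n^{it_j}\bigr)\le\varepsilon f(0)$, and since $|n^{it_j}-1|^2=2-2\Re n^{it_j}$ this reads
\[
\sum_{n\ge 1}a_n\,|n^{it_j}-1|^2\le 2\varepsilon f(0),\qquad j=1,2.
\]

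Next, since $|n^{-it_2}|=1$ we have $|n^{i(t_1-t_2)}-1|=|n^{it_1}-n^{it_2}|$, so by the triangle inequality and $(u+v)^2\le 2u^2+2v^2$,
\[
\sum_{n\ge 1}a_n\,|n^{i(t_1-t_2)}-1|^2
=\sum_{n\ge 1}a_n\,|n^{it_1}-n^{it_2}|^2
\le 2\sum_{n\ge 1}a_n|n^{it_1}-1|^2+2\sum_{n\ge 1}a_n|1-n^{it_2}|^2
\le 8\varepsilon f(0).
\]
Applying the identity $|z-1|^2=2(1-\Re z)$ once more, now with $z=n^{i(t_1-t_2)}$, gives $\sum_{n\ge 1}a_n\bigl(1-\Re n^{i(t_1-t_2)}\bigr)\le 4\varepsilon f(0)$, that is $f(t_1-t_2)\ge(1-4\varepsilon)f(0)$. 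Since $0<\varepsilon<1$ we have $4\varepsilon\le 8\sqrt{\varepsilon}$, which yields the stated bound (in fact a stronger one).

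There is essentially no obstacle here; the only points worth a word of care are the degenerate case $f(0)=0$ and the absolute convergence/rearrangement of the series, both immediate from $\sum_n a_n<\infty$ and $|n^{it}|\le 1$. One could alternatively phrase the argument probabilistically: letting $N$ be a random positive integer with $\PP(N=n)=a_n/f(0)$, the hypotheses say $\Re\EE[N^{it_j}]\ge 1-\varepsilon$, and the computation above is nothing but $\EE|N^{i(t_1-t_2)}-1|^2\le 2\EE|N^{it_1}-1|^2+2\EE|N^{it_2}-1|^2$.
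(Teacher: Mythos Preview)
Your argument is correct and in fact yields the sharper bound $f(t_1-t_2)\ge(1-4\varepsilon)f(0)$, of which the stated inequality is a weakening. The route is genuinely different from the paper's. The paper argues combinatorially: it introduces the level sets $A_t=\{n:\cos(t\log n)>1-\sqrt{\varepsilon}\}$, shows via a Markov-type inequality that $\sum_{n\notin A_{t_j}}a_n\le\sqrt{\varepsilon}\,f(0)$, and then on $A_{t_1}\cap A_{t_2}$ controls $\cos((t_1-t_2)\log n)$ via the addition formula $\cos(\alpha-\beta)=\cos\alpha\cos\beta+\sin\alpha\sin\beta$ together with the bound $|\sin(t_i\log n)|\le\sqrt{2}\,\varepsilon^{1/4}$. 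The splitting into good and bad sets is precisely what produces the $\sqrt{\varepsilon}$ loss. Your approach bypasses this entirely by passing to the $L^2$-type quantity $\sum_n a_n|n^{it}-1|^2=2(f(0)-f(t))$ and using the triangle inequality directly on the unimodular numbers; this keeps everything linear in $\varepsilon$. Either bound suffices for the application in the paper (one only needs the conclusion to be $\ge(1-c)f(0)$ for some absolute $c$ when $\varepsilon$ is small enough), but your proof is shorter and quantitatively stronger.
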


\begin{proof}
	Let $0<\varepsilon <1$ and define 
	\begin{equation}
	A_t :=\{n\ge 1 : \cos (t \log n ) >1-\sqrt{\varepsilon }\}
	\end{equation}
	for $t \in \{t_1,t_2\}$. By the assumption on $t_1$ we have 
	\begin{equation}
	(1-\varepsilon )f(0) \le  f(t_1)= \sum _{n=1}^{\infty } a_n \cos (t_1 \log n ) \le \sum _{n\in A_{t_1}} a_n +(1-\sqrt{\varepsilon })\sum _{n\notin A_{t_1}} a_n = f(0) - \sqrt{\varepsilon} \sum_{n \notin A_{t_1}} a_n,
	\end{equation}
	so that
	\begin{equation}\label{eq:At1ineq}
	\sum_{n \notin A_{t_1}}a_n \le \sqrt{\varepsilon} f(0).
	\end{equation}
	The same argument shows that \eqref{eq:At1ineq} holds for $t_2$ in place of $t_1$ as well. Therefore, by the union bound,
	\begin{equation}\label{eq:bounds on intersection}
	\sum _{n\notin A_{t_1} \cap A_{t_2}} a_n \le 2\sqrt{\varepsilon } f(0) \quad \mbox{and} \quad \sum _{n\in A_{t_1} \cap A_{t_2}} a_n \ge (1-2\sqrt{\varepsilon }) f(0). 
	\end{equation}	
	Now, for any $n\in A_{t_1} \cap A_{t_2}$ and $i=1,2$ we have
	\begin{equation}
	|\sin (t_i \log n )| =\sqrt{1-\cos ^2 (t_i \log n )} \le \sqrt{1-(1-\sqrt{\varepsilon })^2} \le  \sqrt{1-(1-2 \sqrt{\varepsilon} )}= \sqrt{2} \varepsilon ^{\frac{1}{4}}.
	\end{equation} 
	Thus, for any $n \in A_{t_1} \cap A_{t_2}$,
	\begin{multline}\label{eq:cos of sum}
	\cos \left((t_1 -t_2 ) \log n \right)= \cos (t_1 \log n ) \cos (t_2 \log n ) +\sin (t_1 \log n )\sin ( t_2 \log n )\\
	\ge (1- \sqrt{\varepsilon })^2-2\sqrt{\varepsilon } \ge 1-4 \sqrt{\varepsilon }.
	\end{multline}
	From \eqref{eq:bounds on intersection} and \eqref{eq:cos of sum} we obtain that
	\begin{equation}
	\begin{split}
	f(t_1 -t_2)=\sum _{n=1}^{\infty } a_n \cos \left( (t_1-t_2 ) \log n \right) &\ge (1-4 \sqrt{\varepsilon }) \sum _{n\in A_{t_1} \cap A_{t_2}} a_n-\sum _{n \notin A_{t_1} \cap A_{t_2}} a_n\\
	&\ge (1-4 \sqrt{\varepsilon })(1-2 \sqrt{\varepsilon }) f(0)-2 \sqrt{\varepsilon } f(0) \ge (1-8 \sqrt{\varepsilon } ) f(0),
	\end{split}
	\end{equation}
	as needed.
\end{proof}
Fix $\PolyConst >0$. The function $G'(s)$ is monotone-increasing for real $s>1$, with $\lim_{s \to \infty} G'(s) = 0$ and $\lim_{s\to 1^+} G'(s) = -\infty$. For $x>1$, we let $\sigma = \sigma _x$ be the unique real solution to
\begin{equation}\label{eq:def of sigma }
\PolyConst \cdot G'(\sigma )=-\log x.
\end{equation}
The point $\sigma$ plays the role of the saddle point in the proof of Theorem~\ref{thm:PolyPartFunc}.
The following is a corollary of Lemma~\ref{lem:derivatives of G next to 1}.
\begin{cor}\label{cor:derivatives at he saddle point}
	As $x\to \infty$ we have
	\begin{equation}\label{eq:sigma est}
	\sigma -1 =\left( 1+O\left( \frac{1}{\log x } \right) \right) \left( \PolyConst     \Gamma (\gamma +1) \right)^{\frac{1}{\gamma +1}}\left( \log x  \right)^{-\frac{1}{\gamma +1}},
	\end{equation}
	and moreover 
	\begin{equation}
	\begin{split}
	G(\sigma )&= \frac{ \Gamma (\gamma )}{(\PolyConst\Gamma (\gamma +1))^{\frac{ \gamma }{\gamma +1}}}\left( \log x \right)^{\frac{\gamma }{\gamma +1}}+B_0+O\left( (\log x)^{-\frac{1 }{\gamma +1}} \right),\\
	G''(\sigma )&= \frac{\Gamma (\gamma+2 )}{(K\Gamma (\gamma +1))^{\frac{\gamma +2 }{\gamma +1}}}\left( \log x \right)^{\frac{\gamma +2 }{\gamma +1}} +O\left( (\log x )^{\frac{1}{\gamma +1}} \right), \\
	G'''(\sigma )&\sim -\frac{ \Gamma (\gamma +3)}{(K \Gamma (\gamma +1))^{\frac{\gamma +3 }{\gamma +1}}}\left( \log x \right)^{\frac{\gamma +3 }{\gamma +1}}.
	\end{split}
	\end{equation}
\end{cor}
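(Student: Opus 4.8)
The plan is to substitute the expansions of Lemma~\ref{lem:derivatives of G next to 1} into the defining relation \eqref{eq:def of sigma } in order to pin down $\sigma-1$, and then to feed the resulting estimate back into those same expansions to read off $G(\sigma)$, $G''(\sigma)$ and $G'''(\sigma)$.

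First I would record that $G'$ is monotone increasing on $(1,\infty)$ with $G'(s)\to-\infty$ as $s\to 1^+$ and $G'(s)\to 0$ as $s\to\infty$ (the fact quoted just before \eqref{eq:def of sigma }); hence $\sigma=\sigma_x$ exists, is unique, and $\sigma\to 1^+$ as $x\to\infty$. Applying Lemma~\ref{lem:derivatives of G next to 1} with $k=1$ gives $G'(\sigma)=-\Gamma(\gamma+1)/(\sigma-1)^{\gamma+1}+B_1+O(|\sigma-1|\,|\sigma|)$, and since $\sigma\to 1$ this error is $o(1)$, so \eqref{eq:def of sigma } becomes
\begin{equation}
\frac{\PolyConst\,\Gamma(\gamma+1)}{(\sigma-1)^{\gamma+1}}=\log x+O(1).
\end{equation}
Inverting and taking $(\gamma+1)$-th roots yields \eqref{eq:sigma est}; the one point to watch is to first note that $\sigma$ is bounded, so that the error term in the Lemma is legitimately $O(1)$, after which the relative precision $1+O(1/\log x)$ falls out of the $\log x+O(1)$ in the denominator.

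Next, for each $k\in\{0,2,3\}$ I would insert \eqref{eq:sigma est} into the expansion $G^{(k)}(\sigma)=(-1)^k\Gamma(\gamma+k)/(\sigma-1)^{\gamma+k}+B_k+O(|\sigma-1|\,|\sigma|)$ from \eqref{eq:Gderivres}, writing $(\sigma-1)^{-(\gamma+k)}=\big((\sigma-1)^{-(\gamma+1)}\big)^{(\gamma+k)/(\gamma+1)}$ and using $(\sigma-1)^{-(\gamma+1)}=\big(\log x/(\PolyConst\Gamma(\gamma+1))\big)\big(1+O(1/\log x)\big)$. This produces the main term $(-1)^k\Gamma(\gamma+k)(\PolyConst\Gamma(\gamma+1))^{-(\gamma+k)/(\gamma+1)}(\log x)^{(\gamma+k)/(\gamma+1)}$, the relative error $O(1/\log x)$ contributing an absolute error $O\big((\log x)^{(k-1)/(\gamma+1)}\big)$. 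For $k=0$ that absolute error is $O\big((\log x)^{-1/(\gamma+1)}\big)$, which together with $O(|\sigma-1|\,|\sigma|)=O\big((\log x)^{-1/(\gamma+1)}\big)$ gives the claimed formula for $G(\sigma)$ with $B_0$ kept explicit (it does not vanish). For $k=2$ the absolute error is $O\big((\log x)^{1/(\gamma+1)}\big)$, which absorbs $B_2$ and $O(|\sigma-1|\,|\sigma|)$, yielding the stated formula for $G''(\sigma)$. For $k=3$ the main term already grows like $(\log x)^{(\gamma+3)/(\gamma+1)}$, dwarfing every lower-order contribution, so only the leading asymptotic equivalence is claimed.

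The argument is essentially mechanical once Lemma~\ref{lem:derivatives of G next to 1} is in hand, and I do not anticipate any genuine obstacle; the only thing requiring care is the bookkeeping of error terms in the final step --- checking, for each $k$, that the relative $O(1/\log x)$ error on the main term, the constant $B_k$, and the $O(|\sigma-1|\,|\sigma|)$ term all collapse into the single error term displayed --- together with the preliminary observation $\sigma\to 1^+$ that licenses the use of the Lemma's expansions near $s=1$.
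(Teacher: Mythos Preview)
Your proposal is correct and follows essentially the same approach as the paper: apply Lemma~\ref{lem:derivatives of G next to 1} with $k=1$ to the defining relation \eqref{eq:def of sigma } to extract \eqref{eq:sigma est}, and then substitute back for $k=0,2,3$. Your write-up is in fact more explicit than the paper's in tracking the error terms, but the strategy is identical.
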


\begin{proof}
	Since $\lim_{s\to 1^+} G'(s)=-\infty$, it follows that $\sigma_x = O(1)$ for $x \ge 2$. Using Lemma~\ref{lem:derivatives of G next to 1}, \eqref{eq:def of sigma } becomes 
	\begin{equation}
	\PolyConst \frac{\Gamma (\gamma +1)}{(\sigma -1)^{\gamma +1}}= \log x +O(1), 
	\end{equation}
	from which we deduce \eqref{eq:sigma est}. Applying the estimates for $G^{(k)}(s)$ in Lemma~\ref{lem:derivatives of G next to 1} for $s=\sigma$ and $k=0,2,3$, and using \eqref{eq:sigma est}, we obtain the stated estimates for $G(\sigma)$, $G''(\sigma )$, $G'''(\sigma )$.
\end{proof}
\subsection{Proof of Theorem~\ref{thm:PolyPartFunc}}
If we replace $x$ with $\lfloor x \rfloor + \frac{1}{2}$ in \eqref{eq:sum of alpha poly}, then the left-hand side does not change, while the function in the right-hand side is changed by a factor of $1+O(1/x)$, which can be absorbed in the relative error term. Thus, in proving Theorem~\ref{thm:PolyPartFunc} we may assume without loss of generality that $x$ is of the form $m+\frac{1}{2}$ for some positive integer $m$ (i.e.~half-integer). We denote by $F(s)$ the Dirichlet series of $\alpha $, which by Lemma~\ref{lem:dir series poly} is of the form $F(s)=\varphi(s)  e^{\PolyConst \cdot G(s)}$ where $\varphi $ is differentiable, bounded and has bounded derivative on $ \Re s \ge 1$. 
By an effective version of Perron's formula \cite[Thm.~II.2.3]{tenenbaum2015} we have  
\begin{equation}
\sum _{n \le x} \alpha (n)=\frac{1}{2\pi i} \intop _{\sigma -iT}^{\sigma+iT} F(s)x^s \, \frac{\mathrm{d}s}{s} + O\left( x^{\sigma}\sum_{n \ge 1} \frac{\alpha(n)}{n^{\sigma} (1+T | \log(x/n) | )} \right)
\end{equation}
for any $T \ge 1$, where $\sigma=\sigma_x$ is defined in \eqref{eq:def of sigma }. We choose $T=x^2$, obtaining
\begin{equation}\label{eq:perron with specific T}
\sum _{n \le x} \alpha (n)=\frac{1}{2\pi }\intop _{-x^2}^{x^2} \varphi (\sigma +it) e^{\PolyConst \cdot G(\sigma +it)}\frac{x^{\sigma +it}}{\sigma +it}\, \mathrm{d}t + O(E(x)),
\end{equation}
where
\begin{equation}\label{eq:Ex est}
E(x) = x^{\sigma}\sum_{n \ge 1} \frac{\alpha(n)}{n^{\sigma} (1+x^2 | \log(x/n) | )} \le C \frac{x^{\sigma }}{x} \sum _{n \ge 1} \frac{\alpha (n)}{n^{\sigma }} = C \frac{x^{\sigma } F(\sigma ) }{x} \le  C \frac{x^{\sigma } e^{\PolyConst \cdot G(\sigma )}  }{x}.
\end{equation}
(We have used the fact that $x$ is an half-integer and so $\left| \log (x/n) \right| \ge  C/x$.) Set 
\begin{equation}
t_x:=\left( \log x \right)^{\frac{\delta}{6(\gamma+1)} -\frac{\gamma +2}{2(\gamma +1)}}
\end{equation}
for some sufficiently small $\delta>0$. We decompose the integral in the right-hand side of \eqref{eq:perron with specific T} into three parts, to be estimated in the following ways:
\begin{equation}
\begin{split}
|t|\le t_x: \quad &\mbox{estimated using Corollary~\ref{cor:derivatives at he saddle point},} \\
t_x \le |t|\le 1: \quad  &\mbox{bounded using Lemma~\ref{lem:derivatives of G next to 1},} \\
1 \le |t| \le x^2: \quad  &\mbox{bounded using Lemmas~\ref{lem:bound for large t} and \ref{lem:t_1 t_2}.}
\end{split}
\end{equation}
We denote by $I_1, I_2,I_3$ the integrals over these respective domains. 
We begin by computing the asymptotics for $I_1$, which gives the main term. When $|t|\le t_x$ we have, by Corollary~\ref{cor:derivatives at he saddle point},
\begin{equation}\label{eq:varphiapprox}
\frac{\varphi(\sigma+it)}{\sigma+it} = \varphi(1) \left( 1 + O\left( t_x + \sigma-1 \right) \right) = \varphi(1) \left( 1 + O\left(  \log^{-\frac{1}{\gamma+1}} x\right) \right)
\end{equation}
since $\varphi$ has bounded derivative. A second-order Taylor approximation of $G(\sigma+it)$ around $t=0$ gives
\begin{equation}\label{eq:secondtaylor}
\begin{split}
\PolyConst \cdot  G(\sigma +it)&=\PolyConst \cdot G(\sigma ) +it \PolyConst \cdot G'(\sigma ) -\frac{t^2}{2}\PolyConst \cdot  G''(\sigma) + O\left( |t_x^3 G'''(\sigma )|\right) \\
&=  \PolyConst \cdot G(\sigma ) -it \log x -\frac{t^2}{2}\PolyConst \cdot  G''(\sigma ) +O\left(\log ^{ -\frac{\gamma-\delta}{2(\gamma+1)}} x\right)
\end{split}
\end{equation}
for $|t|\le t_x$, where in the first equality we used the fact that $|G'''(\sigma +it)| \le |G'''(\sigma )|$  and in the second equality we used Corollary~\ref{cor:derivatives at he saddle point} and the definition of $\sigma $ in \eqref{eq:def of sigma }. From \eqref{eq:varphiapprox} and \eqref{eq:secondtaylor}, we get 
\begin{equation}
\varphi (\sigma +it) e^{\PolyConst \cdot  G(\sigma +it)}\frac{x^{\sigma +it}}{\sigma +it}= \varphi (1) x^{\sigma } e^{\PolyConst \cdot G(\sigma ) -\frac{t^2}{2}\PolyConst \cdot  G''(\sigma )} \left( 1+O\left(\log ^{-z} x\right) \right)
\end{equation}
where $z:=\min\{\gamma-\delta,2\}/(2(\gamma+1))$. We thus have
\begin{equation}
I_1= \left( 1+O\left( \log ^ {-z} x \right)\right)\frac{\varphi (1) x^{\sigma } e^{\PolyConst \cdot  G(\sigma )}}{2 \pi } \intop _{- t_x} ^{t_x} e^{-\frac{\PolyConst \cdot  G''(\sigma ) }{2} t^2}\, \mathrm{d}t= \left( 1+O\left( \log ^{-z} x \right)\right)\frac{\varphi (1) x^{\sigma } e^{\PolyConst \cdot  G(\sigma )}}{\sqrt{2 \pi \PolyConst \cdot  G''(\sigma )} },
\end{equation}
which by Corollary~\ref{cor:derivatives at he saddle point} can be simplified to
\begin{equation}\label{eq:I1 asymptotics}
I_1 = \left( 1+O\left(\log ^{-z} x \right)\right) A_{\alpha}  x ( \log x )^{-\frac{\gamma +2 }{2( \gamma +1)}}   \exp \left[B (\log x )^{\frac{\gamma }{\gamma +1}}\right],
\end{equation}
where $B$ is defined in \eqref{eq:B def}, and $A_{\alpha}$ is defined in \eqref{eq:poly consts}.
Next we bound $I_2$. Using Lemma~\ref{lem:derivatives of G next to 1} with $s=\sigma+it$ where $t_x \le |t|\le 1$, we get
\begin{equation}
\Re \left( G(\sigma +it) \right) \le  \left|G (\sigma +it)\right|\le \frac{\Gamma (\gamma )}{\left|\sigma -1 +it\right|^{\gamma }} +C \le \frac{\Gamma (\gamma )}{\left|\sigma -1 +it_x\right|^{\gamma }} +C \le \left|G (\sigma +it_x ) \right|+C,
\end{equation}
and so a second-order Taylor approximation shows that
\begin{equation}
\begin{split}
\Re \left( G(\sigma +it) \right)&\le \left| G(\sigma ) +it_xG'(\sigma )-\frac{t_x^2  }{2}G''(\sigma ) \right| +C\\
&= \sqrt{G(\sigma )^2-t_x^2 \left(G(\sigma )G''(\sigma )-G'(\sigma )^2\right)+t_x^4G''(\sigma )^2/4}+C\\
&\le \sqrt{G(\sigma ) ^2- c(\log x )^{\frac{3 \gamma +\delta }{3 (\gamma +1 )}}  } +C \le G (\sigma )- c (\log x )^{\frac{\delta }{3 (\gamma +1)}},
\end{split}
\end{equation}
where the second inequality holds for sufficiently small $\delta $ and follows from Corollary~\ref{cor:derivatives at he saddle point}. Thus 
\begin{equation}\label{eq:I2 bound}
|I_2| \le C x^{\sigma } e^{\PolyConst \cdot G(\sigma ) - c (\log x)^{\frac{\delta }{3(\gamma+1)}}  } \le \frac{C}{\log x } |I_1|.
\end{equation}
We now show that the contribution from $I_3$ is negligible as well. Fix $\varepsilon \in (0,1)$ such that $8 \sqrt{\varepsilon}$ is strictly less than the constant $c$ from Lemma~\ref{lem:bound for large t}. Consider the set 
\begin{equation}
S:=\{t >0 : \Re \left( G(\sigma +it) \right)>(1-\varepsilon )G(\sigma ) \}.
\end{equation}
We have, by definition of $S$,
\begin{equation}
\begin{split}
\intop _{\left[1, x^2  \right]\setminus S} \left| \varphi (\sigma+it) e^{\PolyConst \cdot G(\sigma +it)}\frac{x^{\sigma +it}}{\sigma +it}\right| \, \mathrm{d}t  &\le C x^{\sigma }e^{(1-\varepsilon )\PolyConst \cdot G(\sigma )} \intop _{\left[1, x^2  \right]\setminus S} \frac{\mathrm{d}t}{t+1} \\
&\le C x^{\sigma } e^{(1-\varepsilon )\PolyConst \cdot G(\sigma )} \log x \le \frac{C}{\log x} |I_1|,
\end{split}
\end{equation}
where in the last inequality we used Corollary~\ref{cor:derivatives at he saddle point}. We now study the integral over $S$. Applying Lemma~\ref{lem:t_1 t_2} with the sequence 
\begin{equation}
a_n:= \mathds{1}_{\{ n \text{ is prime}\} } \frac{\log ^{\gamma }n}{n^{\sigma }}
\end{equation}
we find that for any $t_1, t_2 \in S$ we have that $\Re  \left( G(\sigma +i(t_1-t_2)) \right) \ge (1-8 \sqrt{\varepsilon }   )G(\sigma )$ and therefore, by Lemma~\ref{lem:bound for large t} and the choice of $\varepsilon $, either $|t_1 -t_2|\le 1 $ or $|t_1 -t_2 |\ge  e^{1/(\sigma -1)}$. It follows that 
\begin{equation}
S\subseteq \bigcup _{j=0}^ \infty [a_j,b_j]
\end{equation}
for some $a_{j+1} > b_j > a_j \ge 0$ with 
\begin{equation}
b_j-a_j \le 1, \quad a_j \ge j \cdot e^{\frac{1 }{\sigma -1 }} \quad \text{ and }\quad  a_0=0.
\end{equation}
Thus, for sufficiently large $x$,
\begin{equation}
\begin{split}
\intop _{\left[1, x^2  \right]\cap S} \left| \varphi (\sigma +it)  e^{\PolyConst \cdot  G(\sigma +it)}\frac{x^{\sigma +it}}{\sigma +it}\,\mathrm{d}t \right| &\le C x^{\sigma } e^{\PolyConst \cdot  G(\sigma )} \sum _{1 \le j \le x^2}  \intop _{a_j} ^{b_j} \frac{\mathrm{d}t}{t} \le C x^{\sigma } e^{\PolyConst \cdot  G(\sigma )} \sum _{1 \le j \le x^2} \frac{b_j -a_j}{a_j} \\ 
&\le C x^\sigma e^{\PolyConst \cdot  G(\sigma )} e^{-\frac{1 }{\sigma -1 }} \sum _{1 \le j \le x^2 } \frac{1}{j} \le C x^\sigma e^{\PolyConst \cdot  G(\sigma )} e^{-\frac{1 }{\sigma -1 }} \log x \\
&\le \frac{C}{\log x} |I_1|.
\end{split}
\end{equation}
Combining the estimates for the integrals over $\left[1, x^2  \right]\setminus S$ and $\left[1, x^2  \right]\cap S$, we obtain
\begin{equation}\label{eq:I3 bound}
|I_3|\le \frac{C}{\log x}|I_1|.
\end{equation}
We conclude the proof by plugging the estimates \eqref{eq:Ex est}, \eqref{eq:I1 asymptotics}, \eqref{eq:I2 bound} and \eqref{eq:I3 bound} in \eqref{eq:perron with specific T}.	\qed
\subsection{Proof of Theorem~\ref{thm:integer_poly}}
\subsubsection{Auxiliary results}
An important step in the proof is understanding the asymptotic behavior of $\PP(p \mid N_{x})$. We shall see that $\PP(p \mid N_{x})\approx \alpha (p) S(x/p)/S(x)$, and so begin by studying the ratio $S(x/h)/S(x)$. Observe that 
\begin{equation}\label{eq:ratio poly}
S(x/h)h \le C \cdot S(x)
\end{equation}
for $x \ge 1$, $h \ge 1$ by Theorem~\ref{thm:PolyPartFunc}. 
\begin{lem}\label{lem:s poly}
	Let $\alpha\colon \NN\to \RR_{\ge 0}$ be a multiplicative function satisfying \eqref{eq:polycond1}--\eqref{eq:polycond2} and suppose that $x$ is sufficiently large. Let $2\le h \le x$. When $\log h \le (\log x)^{(\gamma+4)/(4\gamma+4)}$,  we have that 
	\begin{equation}
	\frac{S(x/h)}{S(x)} = \frac{1}{h}\exp \bigg[ -\frac{B \gamma }{\gamma +1 } \frac{\log h }{ (\log x)^{\frac{1}{\gamma +1}}} + O\bigg( \frac{1}{(\log x)^c}\bigg) \bigg].
	\end{equation}
	When $\log h \ge (\log x)^{(\gamma+4)/(4\gamma+4)}$,  we have that
	\begin{equation}
	\frac{S(x/h)}{S(x)} \le  \frac{1}{h}e^{- (\log x)^{c}}.
	\end{equation}
\end{lem}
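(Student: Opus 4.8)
The plan is to insert the asymptotic formula of Theorem~\ref{thm:PolyPartFunc} for both $S(x/h)$ and $S(x)$ and simplify the quotient. Throughout write $L=\log x$, $u=\log h$ and $L_h=\log(x/h)=L-u$, and set $\beta=\gamma/(\gamma+1)\in(0,1)$, so that Theorem~\ref{thm:PolyPartFunc} reads $S(y)=(1+O((\log y)^{-\varepsilon}))A_\alpha\, y\,(\log y)^{-\frac{\gamma+2}{2(\gamma+1)}}\exp(B(\log y)^{\beta})$; note $B\beta=B\gamma/(\gamma+1)$ is precisely the constant in the statement, and, since $\beta-1=-1/(\gamma+1)$, we have $\beta u L^{\beta-1}=\frac{\gamma}{\gamma+1}\cdot\log h/(\log x)^{1/(\gamma+1)}$.

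First suppose $\log h\le(\log x)^{(\gamma+4)/(4(\gamma+1))}$. Then $u/L\le L^{-3\gamma/(4(\gamma+1))}=o(1)$, so in particular $L_h\ge L/2$ for $x$ large, and Theorem~\ref{thm:PolyPartFunc} applies to both $S(x/h)$ and $S(x)$ with relative error $O(L^{-\varepsilon})$. Dividing the two formulas gives
\[
\frac{S(x/h)}{S(x)}=\frac{1}{h}\left(\frac{L}{L_h}\right)^{\!\frac{\gamma+2}{2(\gamma+1)}}\exp\!\Big(B\big(L_h^{\beta}-L^{\beta}\big)+O(L^{-\varepsilon})\Big).
\]
A second-order Taylor expansion of $t\mapsto t^{\beta}$ at $t=L$ gives $L_h^{\beta}-L^{\beta}=-\beta u L^{\beta-1}+O(u^2L^{\beta-2})$, and using $\beta-2=-(\gamma+2)/(\gamma+1)$ together with the bound on $\log h$ one checks $u^2L^{\beta-2}=O(L^{-\gamma/(2(\gamma+1))})$; likewise $(L/L_h)^{(\gamma+2)/(2(\gamma+1))}=\exp(O(u/L))=\exp(O(L^{-3\gamma/(4(\gamma+1))}))$. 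Collecting these estimates yields the asserted formula with any $c<\min\{\varepsilon,\gamma/(2(\gamma+1))\}$.

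Now suppose $\log h\ge(\log x)^{(\gamma+4)/(4(\gamma+1))}$; here only a one-sided bound is needed. The key input is the concavity of $t\mapsto t^{\beta}$, which gives $L^{\beta}-L_h^{\beta}\ge\beta u L^{\beta-1}\ge\beta(\log x)^{\gamma/(4(\gamma+1))}$. If $x/h$ exceeds a large absolute constant, Theorem~\ref{thm:PolyPartFunc} gives $S(x/h)\le C(x/h)L_h^{-\frac{\gamma+2}{2(\gamma+1)}}e^{BL_h^{\beta}}$ and $S(x)\ge cxL^{-\frac{\gamma+2}{2(\gamma+1)}}e^{BL^{\beta}}$, so
\[
\frac{S(x/h)}{S(x)}\le\frac{C}{h}\,L^{\frac{\gamma+2}{2(\gamma+1)}}\exp\!\big(-B(L^{\beta}-L_h^{\beta})\big)\le\frac{1}{h}\,L^{\frac{\gamma+2}{2(\gamma+1)}}\exp\!\big(-B\beta(\log x)^{\gamma/(4(\gamma+1))}\big)\le\frac{1}{h}e^{-(\log x)^{c}}
\]
for any $c<\gamma/(4(\gamma+1))$ and $x$ large. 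If instead $x/h$ is bounded, then $S(x/h)=O(1)$ while $S(x)\ge cxL^{-\frac{\gamma+2}{2(\gamma+1)}}e^{BL^{\beta}}$; since $h\le x$ this yields $S(x/h)/S(x)\le\frac{1}{h}\cdot CL^{\frac{\gamma+2}{2(\gamma+1)}}e^{-BL^{\beta}}\le\frac{1}{h}e^{-(\log x)^{c}}$ for any $c<\beta$. Combining the two sub-cases (and shrinking $c$) proves the second assertion.

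The only work beyond invoking Theorem~\ref{thm:PolyPartFunc} is bookkeeping: in the first regime one must check that every correction — the $O(L^{-\varepsilon})$ from Theorem~\ref{thm:PolyPartFunc}, the Taylor remainder $O(u^2L^{\beta-2})$, and the power-of-log factor $(L/L_h)^{(\gamma+2)/(2(\gamma+1))}$ — is $O(L^{-c})$, and it is precisely this requirement that dictates the threshold $(\log x)^{(\gamma+4)/(4(\gamma+1))}$; in the second regime one must separate off the range where $x/h$ is too small for Theorem~\ref{thm:PolyPartFunc} to be applied to $S(x/h)$. Neither point is a genuine obstacle once the threshold is chosen this way, and the real content of the lemma lies entirely in Theorem~\ref{thm:PolyPartFunc}.
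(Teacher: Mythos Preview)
Your argument is correct. For the first regime ($\log h\le(\log x)^{(\gamma+4)/(4(\gamma+1))}$) you do exactly what the paper does: apply Theorem~\ref{thm:PolyPartFunc} at $x$ and $x/h$ and Taylor-expand $(\log(x/h))^{\gamma/(\gamma+1)}$, though you are more explicit than the paper about the power-of-log factor $(L/L_h)^{(\gamma+2)/(2(\gamma+1))}$ and about tracking the exponent of the error.

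In the second regime you take a different route from the paper. The paper does not reapply Theorem~\ref{thm:PolyPartFunc} to $S(x/h)$; instead it writes $S(x/h)/S(x)=\big(S(x/h)/S(x/h_x)\big)\cdot\big(S(x/h_x)/S(x)\big)$ with $h_x=\exp\big((\log x)^{(\gamma+4)/(4(\gamma+1))}\big)$, bounds the first factor by $Ch_x/h$ via the crude inequality \eqref{eq:ratio poly}, and evaluates the second factor using the already-proved first part of the lemma at the threshold $h=h_x$. Your approach instead feeds $x/h$ directly into Theorem~\ref{thm:PolyPartFunc} and uses the concavity bound $L^{\beta}-L_h^{\beta}\ge\beta uL^{\beta-1}$, at the cost of having to split off the range where $x/h$ is bounded (so that Theorem~\ref{thm:PolyPartFunc} does not apply to $S(x/h)$). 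Both arguments are short and valid; the paper's bootstrap avoids the case split and any concern about $L_h$ being small, while your direct method makes the exponent $\gamma/(4(\gamma+1))$ in the decay visible without reference to the first part.
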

\begin{proof}
	Let $h_x:= \exp((\log x)^{(\gamma+4)/(4\gamma+4)})$. Suppose that $h \le h_x$. By a first-order Taylor approximation, we get
	\begin{equation}\label{eq:second taylor}
	(\log (x/h))^{\frac{\gamma}{\gamma+1}} = (\log x)^{\frac{\gamma}{\gamma+1}}  \left( 1-\frac{\log h }{\log x }  \right) ^{\frac{\gamma }{\gamma +1}}= (\log x)^{\frac{\gamma }{\gamma +1}} - \frac{\gamma }{\gamma +1} \frac{\log h }{ (\log x)^{\frac{1}{\gamma +1}} } +O \left( \frac{1}{(\log x) ^c}\right).
	\end{equation}
	Thus, by Theorem~\ref{thm:PolyPartFunc} applied with $x$ and $x/h$, we obtain the first part of the lemma.
	We turn to prove the second part of the lemma. Using the first part of the lemma and \eqref{eq:ratio poly} we get that when $h\ge h_x$ 
	\begin{equation}
	\frac{S(x/h)}{S(x)} = \frac{S(x/h)}{S(x/h_x)}\frac{S(x/h_x)}{S(x)}\le \frac{Ch_x}{h}\frac{1}{h_x} \exp \left[  -c\frac{\log h_x }{ (\log x)^{\frac{1}{\gamma +1}}} \right] \le \frac{1}{h} e^{-(\log x)^c},
	\end{equation}
	as needed.
\end{proof}
\begin{lem}\label{lem:prop p poly}
	Let $\alpha\colon \NN\to \RR_{\ge 0}$ be a multiplicative function satisfying $\log \alpha(p) = o(\log p)$, \eqref{eq:polycond2} and \eqref{eq:ratio poly}. For sufficiently large $x$ and any prime $p\le x$ we have that 
	\begin{equation}
	\PP \left( p \mid N_x \right)= \frac{\alpha(p) S(x/p)}{S(x)}+O \bigg( \frac{1}{p \log ^2 p} \bigg).
	\end{equation} 
\end{lem}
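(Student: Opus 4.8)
The plan is to expand $S(x)\,\PP(p\mid N_x)=\sum_{n\le x,\ p\mid n}\alpha(n)$ according to the exact power of $p$ dividing $n$. Writing $n=p^k m$ with $k=\nu_p(n)\ge 1$ and $p\nmid m$, the multiplicativity of $\alpha$ gives
\begin{equation}
S(x)\,\PP(p\mid N_x)=\sum_{k\ge 1}\alpha(p^k)\,T_k,\qquad T_k:=\sum_{\substack{m\le x/p^k\\ p\nmid m}}\alpha(m).
\end{equation}
I will show that the $k=1$ term produces the main term $\alpha(p)\,S(x/p)$ up to an acceptable error, while the terms with $k\ge 2$ are negligible.

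For the tail $k\ge 2$, I would bound $T_k\le S(x/p^k)$ (legitimate since $\alpha\ge 0$) and invoke the hypothesis \eqref{eq:ratio poly} with $h=p^k$, giving $S(x/p^k)\le C\,S(x)/p^k$; summing and using \eqref{eq:polycond2} (which controls $\sum_{k\ge 2}k\alpha(p^k)/p^k$, hence a fortiori $\sum_{k\ge 2}\alpha(p^k)/p^k$) then yields $\sum_{k\ge 2}\alpha(p^k)\,T_k\le C\,S(x)/(p\log^2 p)$. For the $k=1$ term I would write $T_1=S(x/p)-R$ with $R:=\sum_{m\le x/p,\ p\mid m}\alpha(m)$, decompose each such $m$ as $m=p^j l$ with $j\ge 1$, $p\nmid l$, and use \eqref{eq:ratio poly} once more (with base point $x/p$ and $h=p^j$) to get
\begin{equation}
R\le\sum_{j\ge 1}\alpha(p^j)\,S(x/p^{j+1})\le C\,S(x/p)\sum_{j\ge 1}\frac{\alpha(p^j)}{p^j}.
\end{equation}
Since $\sum_{j\ge 1}\alpha(p^j)/p^j=\alpha(p)/p+O(1/(p\log^2 p))$ by \eqref{eq:polycond2}, this gives $R\le C\,S(x/p)\big(\alpha(p)/p+O(1/(p\log^2 p))\big)$, so $\alpha(p)\,T_1=\alpha(p)\,S(x/p)+O(\alpha(p)\,R)$; one final application of \eqref{eq:ratio poly}, in the form $S(x/p)\le C\,S(x)/p$, turns this into
\begin{equation}
\alpha(p)\,R\le C\,S(x)\Big(\frac{\alpha(p)^2}{p^2}+\frac{\alpha(p)}{p^2\log^2 p}\Big).
\end{equation}

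Collecting the two estimates and dividing by $S(x)$ proves the lemma, provided the right-hand side of the last display is $O\big(S(x)/(p\log^2 p)\big)$; this last implication is the only place the hypothesis $\log\alpha(p)=o(\log p)$ enters, and is the single point needing care. Since $\log\alpha(p)=o(\log p)$, for all sufficiently large $p$ we have $\alpha(p)\le p^{1/4}$, so $\alpha(p)^2/p^2\le p^{-3/2}\le 1/(p\log^2 p)$ and $\alpha(p)/(p^2\log^2 p)\le 1/(p\log^2 p)$; the finitely many remaining (bounded) primes contribute at most a constant multiple of $1/(p\log^2 p)$, since $1/(p\log^2 p)$ is bounded below on any finite set of primes. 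Beyond this routine bookkeeping I do not foresee a genuine obstacle: the entire analytic input — the near-multiplicativity of $h\mapsto S(x/h)$ packaged in \eqref{eq:ratio poly} — is already supplied by Theorem~\ref{thm:PolyPartFunc}.
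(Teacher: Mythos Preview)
Your proof is correct and follows essentially the same approach as the paper: decompose according to the exact power of $p$, bound the tail $k\ge 2$ via \eqref{eq:ratio poly} and \eqref{eq:polycond2}, and for $k=1$ remove the coprimality constraint at the cost of an error controlled again by \eqref{eq:ratio poly}, with the hypothesis $\log\alpha(p)=o(\log p)$ entering only in the final bookkeeping. The paper packages the coprimality error slightly differently, proving once that $\sum_{n\le y,\,p\mid n}\alpha(n)\le C\,S(y)/\sqrt{p}$ and reusing it, whereas you track the error more explicitly as $\alpha(p)^2/p^2+\alpha(p)/(p^2\log^2 p)$; both routes are equivalent.
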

\begin{proof}
	By \eqref{eq:ratio poly} we have for any $y>1$
	\begin{equation}\label{eq:breaking to maximal prime power poly}
	\sum _{\substack{n\le y \\ p \mid n}} \alpha (n)\le \sum _{k \ge 1} \sum _{\substack{n \le y \\ p^k \mid \mid n}} \alpha (n) \le \sum _{k \ge 1} \alpha (p^k) S\Big(\frac{y}{p^k} \Big) \le  C \cdot S(y) \sum _{k \ge 1} \frac{\alpha(p^k)}{p^k} \le \frac{C\cdot S(y)}{\sqrt{p} }.
	\end{equation}
Hence
	\begin{equation}
	\begin{split}
	\PP \left(p \mid \mid N_{x} \right) = \frac{1}{S(x)}\sum _{\substack{n\le x\\ p\mid \mid n}} \alpha (n)=\frac{\alpha (p)}{S(x)} \sum _{\substack{m \le x/p \\ p\nmid m}} \alpha (m)&=\frac{\alpha (p) S(x/p)}{S(x)}+ O \left( \frac{ \alpha (p) S(x/p)}{\sqrt{p} S(x) } \right)\\
	&= \frac{\alpha(p)S(x/p)}{S(x)}+O\left( \frac{1}{p \log ^ 2 p}\right),
	\end{split}
	\end{equation}
	where in the last inequality we use \eqref{eq:ratio poly} again. Using the same arguments as in \eqref{eq:breaking to maximal prime power poly}, we have that 
	\begin{equation}
	\PP (p^2 \mid N_{x}) \le C \sum _{k=2}^{\infty} \frac{\alpha (p^k)}{p^k} \le \frac{C}{p \log ^ 2 p}.
	\end{equation}
	As $\PP (p \mid N_{x}) = \PP (p \mid \mid N_{x}) + \PP (p^2 \mid N_{x})$, the proof is concluded.
\end{proof}
\subsubsection{Conclusion of proof}
We begin with the first part of the theorem. We abbreviate $P_1(N_{x})$ as $P_1$. Fix $0<a<b<\infty$. It suffices to show that
\begin{equation}\label{eq:liminf typ}
\liminf_{x \to \infty} \PP \left( a\le \frac{\log P_1}{(\log x)^{\frac{1}{\gamma+1}} }\le b \right) \ge \PP \left( a \le Y \le b\right)
\end{equation}
where $Y$ has $\gammadist( \gamma+1,(K\Gamma(\gamma+1))^{1/(\gamma+1)})$ distribution. 

For any prime $p$ such that $a (\log x)^{\frac{1}{\gamma +1}} \le \log p \le b (\log x)^{\frac{1}{\gamma +1}}$, we have, by Lemmas~\ref{lem:s poly} and \ref{lem:prop p poly},
\begin{equation}\label{eq:typlopineq}
\begin{split}
\PP ( P_1 = p ) &=\sum _{\substack{ n\le x \\ p\mid n }} \frac{\alpha (n)}{S(x)}  \frac{\nu _p (n) \log p }{\log n} \ge    \frac{\log p }{\log x } \PP (p \mid N_x) \ge  \frac{\log p }{\log x } \left( \frac{\PolyConst \log^{\gamma} p S(x/p)}{S(x)}+O \bigg( \frac{1}{p \log ^2 p} \bigg) \right)\\
& \ge \left( 1+O\left( \frac{1}{(\log x)^c} \right) \right) \frac{ \PolyConst \log ^{\gamma +1} p }{p \log x } \exp \left[ -\frac{B \gamma }{\gamma +1 } \frac{\log p }{(\log x)^{\frac{1}{\gamma +1}}} \right],
\end{split}
\end{equation}
where the error term $1/(p\log^2p)$ is absorbed in the last error term. Thus,
\begin{equation}
\PP \left( a \le \frac{\log P_1}{(\log x)^{1/(\gamma+1)} } \le b \right)\ge \left( 1+O\left( \frac{1}{(\log x)^c} \right) \right)\sum _{n_x^a \le p \le n_x ^b} \frac{ \PolyConst \log ^{\gamma +1} p }{p \log x } \exp \left[ -\frac{B \gamma }{\gamma +1 } \frac{\log p }{(\log x)^{\frac{1}{\gamma +1}}} \right]
\end{equation}
where $n_x := \exp( (\log x)^{1/(\gamma+1)} )$. By Lemma~\ref{lem:sum on primes} with $\alpha \equiv 1$, the interval $[n_x^a, n_x^b]$ and 
\begin{equation}
g(t) =  \frac{ \PolyConst \log ^{\gamma +1} t }{t \log x } \exp \bigg[ -\frac{B \gamma }{\gamma +1 } \frac{\log t }{(\log x)^{\frac{1}{\gamma +1}}} \bigg]
\end{equation}
we have
\begin{multline}\label{eq:p typ to int}
\PP \left( a \le \frac{\log P_1}{(\log x)^{1/(\gamma+1)}} \le b \right) \ge  \left( 1+O\left( \frac{1}{(\log x)^c} \right)  \right) \\
\left( \intop _{n_x^a } ^{n_x^b} \frac{\PolyConst \log^{\gamma } t }{t\log x} \exp \left[ -\frac{B \gamma }{\gamma +1 } \frac{\log t }{ (\log x)^{\frac{1}{\gamma +1}} } \right]\,\mathrm{d}t + O(\log^{-\frac{1}{\gamma+1}}x) \right).
\end{multline}
The change of variables $t=n_x^z$ in the last integral shows that it equals $\PP(a \le Y \le b)$. Taking $x$ to infinity in \eqref{eq:p typ to int} we obtain \eqref{eq:liminf typ}, as needed.

We turn to the second part of the theorem. By Lemma~\ref{lem:prop p poly} and \eqref{eq:polycond1} we have
\begin{equation}
\EE  \omega(N_{x}) = \sum_{p \le x} \PP( p \mid N_{x})=	\sum_{p \le x} \frac{\PolyConst \log^{\gamma} p S(x/p)}{S(x)}+O \bigg( \sum_p \frac{1}{p \log ^2 p} \bigg).
\end{equation}
The error term is bounded by a constant. In order to estimate the sum, we split it into three sums $S_{1}$, $S_{2}$ and $S_{3}$, over the respective ranges $p < \exp((\log x)^{\delta_1})$, $\exp((\log x)^{\delta_1}) \le p \le \exp((\log x)^{\delta_2})$ and $\exp((\log x)^{\delta_2}) < p \le x$, where $\delta_1 = 1/(2(\gamma+1))$ and $\delta_2 = (\gamma+4)/(4\gamma+4)$. We bound $S_{1}$ using \eqref{eq:ratio poly}:
\begin{equation}
S_{1}  \le C \sum_{p \le \exp((\log x)^{\delta_1}) } \frac{\log^{\gamma} p }{p} \le C \log^{ \delta_1 \gamma} x,
\end{equation}
where in the last inequality we used Lemma~\ref{lem:sum on primes} with $\alpha \equiv 1$ and $g(t)= \log^{\gamma} t /t$. We bound $S_{3}$ using the second part of Lemma~\ref{lem:s poly}, which gives 
\begin{equation}
S_{3} \le e^{- (\log x)^{c}}\sum_{\exp((\log x)^{\delta_2})  <p \le x} \frac{\PolyConst \log^{\gamma} p}{p} \le e^{- (\log x)^{c}}.
\end{equation}
We now estimate $S_{2}$. By Lemma~\ref{lem:s poly},
\begin{equation}\label{eq:s12 1}
S_{2} = \left(1 + O\left(\frac{1}{(\log x)^c}\right) \right) \sum_{(\log x)^{\delta_1} \le \log p \le (\log x)^{\delta_2}} \frac{\PolyConst \log^{\gamma} p}{p}  \exp \left[ -\frac{B \gamma }{\gamma +1 } \frac{\log p }{ (\log x)^{\frac{1}{\gamma +1}}} \right].
\end{equation}
From \eqref{eq:s12 1} and Lemma~\ref{lem:sum on primes} with $\alpha \equiv 1$ and
\begin{equation}
g(t) =  \frac{ \PolyConst \log ^{\gamma} t }{t } \exp \left[ -\frac{B \gamma }{\gamma +1 } \frac{\log t }{ (\log x)^{\frac{1}{\gamma +1}}} \right]
\end{equation}
we obtain 
\begin{equation}
S_{2} =  \left(1+O\left(\frac{1}{(\log x)^c} \right)\right)\left( \int_{\exp((\log x)^{\delta_1}) }^{\exp((\log x)^{\delta_2})} \frac{g(t)}{\log t} \mathrm{d}t + O\left( \log^{\max\{0,\frac{\gamma-1}{\gamma+1}\}} x\right) \right).
\end{equation}
The change of variables $t=n_x^z$ in the last integral shows that it equals 
\begin{multline}
(\log x)^{\frac{\gamma}{\gamma+1}}\intop_{(\log x)^{\delta_1-\frac{1}{\gamma+1}}}^{(\log x)^{\delta_2-\frac{1}{\gamma+1}}} \PolyConst z^{\gamma-1} \exp\left( -\frac{B \gamma z}{\gamma+1} \right) \, \mathrm{d}z \\
= (\log x)^{\frac{\gamma}{\gamma+1}} \PolyConst \Gamma(\gamma) \left( \frac{\gamma+1}{B\gamma}\right)^{\gamma} \left(1 + O\left( (\log x)^{\gamma(\delta_1 - \frac{1}{\gamma+1})}\right)\right).
\end{multline}
Since all the accumulated error terms are of order smaller than $(\log x)^{\gamma/(\gamma+1)}$, the expectation of $\omega$ is estimated. The expectation of $\Omega$ behaves the same by Lemma~\ref{lem:omega bound}.  \qed

\bibliographystyle{abbrv}
\bibliography{references}

\Addresses
\end{document}